\DeclareMathAlphabet\oldmathcal{OMS}        {cmsy}{b}{n}
\SetMathAlphabet    \oldmathcal{normal}{OMS}{cmsy}{m}{n}
\DeclareMathAlphabet\oldmathbcal{OMS}       {cmsy}{b}{n}
\newtheorem{thm}{Theorem}
\newtheorem{prop}[thm]{Proposition}
\newtheorem{cor}[thm]{Corollary}
\newtheorem{lem}[thm]{Lemma}
\newtheorem{defn}[thm]{Definition}
\newenvironment{xpl}{\refstepcounter{thm} \medskip \noindent {\bf  Example \arabic{section}.\arabic{thm}}}{\hfill$\diamondsuit$\mbox{}\bigskip}
\newenvironment{rmk}{\refstepcounter{thm} \medskip \noindent {\bf  Remark \arabic{section}.\arabic{thm}.}}{\hfill\mbox{}\bigskip}
\newcounter{num}
\newenvironment{thmlist}{\begin{list}{(\roman{num})}{\usecounter{num}\setlength{\leftmargin}{25pt}
\setlength{\itemindent}{0pt}\setlength{\labelwidth}{20pt}\setlength{\labelsep}{5pt}\setlength{\itemsep}{0in}}}{\end{list}}
\newcommand{\Z}{\mathbb{Z}}
\newcommand{\Q}{\mathbb{Q}}
\newcommand{\R}{\mathbb{R}}
\newcommand{\C}{\mathbb{C}}
\newcommand{\re}{\operatorname{Re}}
\newcommand{\cps}{\mathbb{C}P}
\newcommand{\ol}[1]{\bar{#1}}
\newcommand{\Ad}{\operatorname{Ad}}
\newcommand{\Alb}{\operatorname{Alb}}
\newcommand{\Aut}{\operatorname{Aut}}
\newcommand{\aut}{\operatorname{\mathfrak{aut}}}
\newcommand{\hol}{\operatorname{\mathfrak{hol}}}
\newcommand{\Diff}{\operatorname{Diff}}
\newcommand{\Fol}{\operatorname{Fol}}
\newcommand{\fol}{\operatorname{\mathfrak{fol}}}
\newcommand{\CR}{\operatorname{CR}}
\newcommand{\Cr}{\operatorname{\mathfrak{cr}}}
\newcommand{\grad}{\operatorname{grad}}
\newcommand{\Hol}{\operatorname{Hol}}
\newcommand{\End}{\operatorname{End}}
\newcommand{\Inn}{\operatorname{Inn}}
\newcommand{\Isom}{\operatorname{Isom}}
\newcommand{\Ric}{\operatorname{Ric}}
\newcommand{\Vol}{\operatorname{Vol}}
\newcommand{\PSU}{\operatorname{PSU}}
\newcommand{\U}{\operatorname{U}}
\newcommand{\SO}{\operatorname{SO}}
\newcommand{\Sp}{\operatorname{Sp}}
\newcommand{\G}{\operatorname{G}}
\newcommand{\sm}[1]{\scriptscriptstyle{#1}}
\newcommand{\Sm}[1]{\scriptstyle{#1}}
\newcommand{\contr}{\,\lrcorner\,}
\newcommand{\CX}{\mbox{${\mathcal X}\hspace{-.8em}-\,$}}
\newcommand{\simarrow}{\stackrel{\sim}{\longrightarrow}}
\newcommand{\superscript}[1]{\ensuremath{^{\textrm{#1}}}}
\title{Stability of Sasaki-extremal metrics under complex deformations}
\author{Craig van Coevering}
\address{Max-Planck-Institut f\"{u}r Mathematik, Vivatsgasse 7, 53111 Bonn Germany}
\email{craigvan@mpim-bonn.mpg.de}
\date{April 5, 2012}
\keywords{Sasaki metrics, extremal metrics, deformation, Sasaki-Einstein}
\subjclass{Primary 53C25, Secondary 32Q20}
\begin{document}

\begin{abstract}

We consider the stability of Sasaki-extremal metrics under deformations of the transversal complex structure of the Sasaki
foliation $\mathscr{F}_{\xi}$, induced by the Reeb vector field $\xi$.  Let $g$ be a Sasaki-extremal metric on $M$, $G$ a compact connected subgroup of the automorphism group of the Sasaki structure, and suppose the reduced scalar curvature satisfies
$s_g^G =0$.  And consider a $G$-equivariant deformation $(\mathscr{F}_\xi, \ol{J}_t)_{t\in\mathcal{B}}$ of
of the transversely holomorphic foliation preserving
$\mathscr{F}_{\xi}$ as a smooth foliation.  Provided the Futaki invariant relative to $G$ of $g$ is nondegenerate, 
the existence of Sasaki-extremal metrics is preserved under small variations of $t\in\mathcal{B}$ and of the Reeb vector
$\xi\in\mathfrak{z}$ in the center of $\mathfrak{g}$.  If $G=T\subseteq\Aut(g,\xi)$ is a maximal torus, the nondegeneracy of the Futaki invariant is automatic.  So such deformations provide the easiest examples.

When the initial metric $g$ is Sasaki-Einstein this result can be improved using known properties of the Futaki invariant 
Although the relative Futaki invariant is useless in this case, non-trivial deformations can be obtained when
$G=T\subseteq\Aut(g,\xi)$ is a maximal torus.  Then a slice of the above family of Sasaki-extremal metrics is Sasaki-Einstein.
Thus for each $t\in\mathcal{B}$ there is a $\xi_t\in\mathfrak{z}$ so that the Sasaki-extremal metric with
Reeb vector field $\xi_t$ is Sasaki-Einstein.  We apply this to deformations of toric 3-Sasaki 7-manifolds to obtain
new families of Sasaki-Einstein metrics on these manifolds, which are deformations of 3-Sasaki metrics.

\end{abstract}

\maketitle

\section{Introduction}

Recall that a polarization on a complex manifold $M$ is and element $\Omega\in H^{1,1}(M)\cap H^2(M,\R)$ such that $\Omega$ can
be represented by a K\"{a}hler form $\omega_g$ of a K\"{a}hler metric $g$ on $M$.  In the hope of finding a canonical metric in
the polarization E. Calabi~\cite{Cal82,Cal85} defined a natural Riemannian functional on this space of K\"{a}hler metrics.  Denote by
$\mathfrak{M}_\Omega$ the space of K\"{a}hler metrics representing the polarization.   Calabi proposed that one should seek
critical points of the functional
\begin{equation}\label{eq:Calabi-funct}
\begin{array}{rcl}
\mathfrak{M}_\Omega & \overset{\mathcal{C}}{\longrightarrow} & \R \\
g & \mapsto & \int_M s_g^2 \, d\mu_g
\end{array}
\end{equation}
where $s_g$ is the scalar curvature of $g$ and $d\mu_g$ the volume form.  He called the critical points of this functional
\emph{extremal K\"{a}hler metrics} and showed that $g$ is extremal if and only if the gradient of $s_g$ is a real holomorphic
vector field.  In particular, a constant scalar curvature metric is extremal, but many examples of extremal metrics are known
which are not constant scalar curvature.  An extremal K\"{a}hler metric is of constant scalar curvature if and only if the
Futaki invariant vanishes~\cite{Fut83,Cal85}.  Many examples are known of extremal metrics of both constant and nonconstant scalar curvature

One way of producing new examples is to start with a known extremal metric and try to deform the solution as either the K\"{a}hler
class or complex structure varies.  This has been done with considerable success by C. LeBrun and S. R. Simanca~\cite{LebSim94,LebSim93}, where it was shown that there is no obstruction to deforming extremal metrics as the K\"{a}hler class is varied, whereas a nondegeneracy condition on the Futaki invariant is sufficient to guarantee that a constant scalar curvature metric can be deformed through extremal metrics as the complex structure is deformed.  The nondegeneracy of the Futaki invariant is necessary as deforming the complex structure can result in a reduction of the size of the automorphism group.  Later Y. Rollin, S. R. Simanca, and C. Tipler~\cite{RolSimTip11} generalized the later result to the case of equivariant deformations of the complex structure, where the sufficient condition becomes the nondegeneracy of the relative Futaki invariant.

This article gives analogous results for Sasaki manifolds.  Similar results as in~\cite{RolSimTip11} are proved, although the a polarization of a Sasaki manifold is given by a choice of Reeb vector field, rather than a K\"{a}hler class.  Thus the notions
of the Sasaki polarization and nondegeneracy of the relative Futaki invariant involve varying the Reeb vector field.

\subsection{Main result}

Sasaki geometry sits between two K\"{a}hler geometries.  If $(M,g)$ has is Sasaki then the metric cone
$(C(M)=\R_{>0}\times M,\ol{g}=dr^2 +r^2 g)$ is K\"{a}hler for some almost complex structure.  Furthermore, a Sasaki structure is contact and
the foliation $\mathscr{F}_{\xi}$ generated by the Reeb vector field $\xi$ is transversely K\"{a}hler, i.e. locally the transversal space to the leaves has a complex structure $\ol{J}$ so that the induced metric $g^T$ is K\"{a}hler.  Alternatively,
$\ol{J}$ is an integrable almost complex structure on $\nu(\mathscr{F}_{\xi})=TM/{\tau(\mathscr{F}_{\xi})}$, where $\tau(\mathscr{F}_{\xi})\subset TM$ is the subbundle tangent to the leaves.

So it is not surprising that the notion of extremal metric can be defined analogously for Sasaki metrics using the same functional (\ref{eq:Calabi-funct}) defined on the space $\mathfrak{M}(\xi,\ol{J})$ of metrics arising from Sasaki structures with Reeb
vector field $\xi$ and transversal complex structure $\ol{J}$.  This program was carried out in~\cite{BoyGalSim08}.  See also~\cite{BoyGalSim09}.
Not surprisingly, critical points are Sasaki metrics $g$ with the gradient of $s_g$ a transversally real holomorphic vector field.
One notable difference from the K\"{a}hler case is the role of the polarization $\Omega$ is taken by the Reeb vector field
$\xi$.  The stability of extremal solutions under variations of $\xi$ was proved by C. P. Boyer, K. Galicki, and
S. R. Simanaca~\cite{BoyGalSim08}.

The goal of this article is to give a similar stability result for Sasaki-extremal
metrics under equivariant deformations of the transversal complex structure to the Reeb foliation.  The results we obtain are similar to those in~\cite{RolSimTip11} in the K\"{a}hler case.  Let $(g,\eta,\xi,\Phi)$ is a Sasaki-extremal structure on $M$.  Then as in the K\"{a}hler case~\cite{Cal85}, it was shown in~\cite{BoyGalSim08} that the identity component of automorphism group of the Sasaki structure
$\Aut(g,\eta,\xi,\Phi)_0$ is a maximal compact subgroup of $\Fol(M,\mathscr{F}_{\xi},\ol{J})_0$, the identity component of the group
of transversely holomorphic automorphisms of the foliation $\mathscr{F}_{\xi}$.  Let $G\subseteq G'=\Aut(g,\eta,\xi,\Phi)_0$
be a connected subgroup with Lie algebras $\mathfrak{g}\subseteq\mathfrak{g}'$ so that $\xi\in\mathfrak{g}$.
Then $\mathfrak{g}'/\{\xi\} \subseteq\mathfrak{h}^T(\xi,\ol{J})_0$, where $\mathfrak{h}^T(\xi,\ol{J})_0$ is the subspace of
transversely holomorphic vector fields modulo those tangent to the leaves, in $\tau(\mathscr{F}_{\xi})$, that have
\emph{holomorphy potentials}, i.e. are of the form $\partial^{\#}\phi :=(\ol{\partial}\phi)^\#$ for a \emph{basic} function
$\phi$.  Let $\mathfrak{z}=Z(\mathfrak{g})$ be the center of $\mathfrak{g}$ and $\mathfrak{z}'=C_{\mathfrak{g}'}(\mathfrak{g})$ the centralizer of $\mathfrak{g}$ in $\mathfrak{g}'$.  Also define $\mathfrak{p}=N_{\mathfrak{g}'}(\mathfrak{g})$ to be the normalizer
of $\mathfrak{g}$ in $\mathfrak{g}'$.  It will turn out that $\mathfrak{p}/\mathfrak{g} =\mathfrak{z}'/\mathfrak{z}$.

Denote the space of $G$-invariant smooth functions by $C^\infty(M)^G$.  A \emph{transversal deformation} of the Sasaki structure
$(g,\eta,\xi,\Phi)$ is a Sasaki structure $(\tilde{g},\tilde{\eta},\xi,\tilde{\Phi})$ with
$\tilde{\eta} =\eta +d^c \phi$ for $\phi\in C^\infty(M)^G$ and transversal K\"{a}hler form
$\tilde{\omega}^T =\omega^T +\frac{1}{2}dd^c \phi$, while the Reeb vector field and transversal complex structure $\ol{J}$ remain unchanged.  We can then introduce the notion of the reduced scalar curvature $s_g^G$ for any
$G$-invariant Sasaki structure, and the Futaki invariant relative to $G$
\[ \mathcal{F}_{G,\xi} : \mathfrak{p}/\mathfrak{g} \rightarrow\R, \]
where $\mathfrak{p}$ is the normalizer of $\mathfrak{g}$ in $\mathfrak{g}'$, which is independent of the Sasaki structure with
Reeb vector field $\xi$ and transversal complex structure $\ol{J}$.  This space of Sasaki structure we denote
$\mathcal{S}(\xi,\ol{J})$.  On the space of $G$-invariant structures  $\mathcal{S}(\xi,\ol{J})^G$ the condition $s_g^G =0$
is equivalent to $g$ being Sasaki-extremal and $\mathcal{F}_{G,\xi} \equiv 0$.

The connected component of the identity of the center is a torus $T^r\subseteq G$, and the contact structure defines a moment map
on the cone $C(M) =\R_{>0}\times M$,
\begin{equation} \label{eq:moment-map}
 \mu_{\eta} :C(M) \rightarrow\mathfrak{z}^*,
\end{equation}
where $\mu_{\eta}(x,r)(X) = r^2\eta_x(X_x)$, with $X\in\mathfrak{z}$ and $X_x$ the induced vector at $x\in M$.
The image of (\ref{eq:moment-map}) is a convex polyhedral cone
$\mathcal{C}_{\mathfrak{z}}^*\subset\mathfrak{z}^*$ (\cite{MorTom97}).  Although $\mu_{\eta}$ obvious depends on $\eta$, the image $\mathcal{C}_{\mathfrak{z}}^*$ is the same for any transversal deformation $\tilde{\eta} =\eta +d^c \phi$, for
$\phi\in C^\infty(M)_G$, and turns out to be independent of a choice of Reeb vector fields in $\mathfrak{z}$.
Define $\mathfrak{z}^+ =\{\zeta\in\mathfrak{z}\ :\ \eta(\zeta)>0 \}$.  If $\zeta\in\mathfrak{z}^+$, then
$\eta_{\zeta} =\eta(\zeta)^{-1}\eta$ is easily seen to be a contact form of a Sasaki structure with the same
CR structure as $(\xi,\tilde{\eta},\tilde{\Phi},\tilde{g})$ and with Reeb vector field $\zeta$.  Fakas' theorem says the dual
cone $\mathcal{C}_{\mathfrak{z}}$ to $\mathcal{C}_{\mathfrak{z}}^*$ is a convex polyhedral cone, and from
(\ref{eq:moment-map}) we see that
\[ \mathfrak{z}^+ =\overset{\circ}{\mathcal{C}}_{\mathfrak{z}}.\]

Differentiating the relative Futaki invariant with $\xi$ varying in $\mathfrak{z}$ induces a linear map
\begin{equation}\label{eq:Futaki-dif}
 \mathfrak{p}/\mathfrak{g} \rightarrow\mathfrak{z}^*,
\end{equation}
and we say that the Futaki invariant $\mathcal{F}_{G,\xi}$ relative to $G$ is nondegenerate if it is injective.

We consider $G$-equivariant deformations of the transversal complex structure of the Reeb foliation $(\mathscr{F}_{\xi}, \ol{J})$.
We fix the smooth structure of $\mathscr{F}_{\xi}$, so a deformation is given by $(\mathscr{F}_{\xi}, \ol{J}_t)_{t\in\mathcal{B}}$.
The holomorphic structure on $\mathscr{F}_{\xi}$ has a versal deformation space~\cite{KacNic89,Gir92}, with tangent space \[H^1_{\ol{\partial}_b}(\mathcal{A}^{0,\bullet}),\quad\text{where } \mathcal{A}^{0,k} =\Gamma\Bigl(\Lambda^{0,k}_b\otimes\nu(\mathscr{F}_{\xi})^{1,0}\Bigr)\]
denotes the basic $(0,k)$-forms with values in $\nu(\mathscr{F}_{\xi})^{1,0}$ and
\[  0\longrightarrow \mathcal{A}^{0,0} \overset{\ol{\partial}_b}{\longrightarrow}\mathcal{A}^{0,1} \overset{\ol{\partial}_b}{\longrightarrow}\cdots \]
is the \emph{basic} Dolbeault complex with values in the transverse holomorphic bundle $\nu(\mathscr{F}_{\xi})^{1,0}$.
Then $H^1_{\ol{\partial}_b}(\mathcal{A}^{0,\bullet})^G$ is the tangent space to the $G$-equivariant deformations of
$(\mathscr{F}_{\xi}, \ol{J})$.  By~\cite{KacGmi97} the transversely K\"{a}hler property of $(\mathscr{F}_{\xi}, \ol{J})$ is stable
under small deformations.  But the existence of a compatible Sasaki structure may be obstructed.
The obstruction, due to H. Nozawa~\cite{Noz11}, is reviewed in Section~\ref{subsect:defor}.  An unobstructed deformation
$(\mathscr{F}_{\xi}, \ol{J}_t)_{t\in\mathcal{B}}$ is said to be of \emph{$(1,1)$-type}.  If $\mathcal{B}$ is smooth, after 
possibly restricting to a neighborhood of zero in $\mathcal{B}$, there is a family 
$(g_t,\eta_t,\xi,\Phi_t)\in\mathcal{S}(\xi,\ol{J}_t),\ t\in\mathcal{B}$.  And if $(\mathscr{F}_{\xi}, \ol{J}_t)_{t\in\mathcal{B}}$
is $G$-equivariant, we may assume that the family $(g_t,\eta_t,\xi,\Phi_t),\ t\in\mathcal{B}$ is $G$-equivariant.
In particular, if $\Ric_g >0$ then the obstruction vanishes on a neighborhood of zero in any deformation. 

Associated to the family $(\mathscr{F}_{\xi}, \ol{J}_t)_{t\in\mathcal{B}}$ for sufficiently small $\phi\in C^\infty(M)^G$ we consider the Sasaki metrics $g_{t,\alpha,\phi}$ with transverse K\"{a}hler form
\[\omega^T_{t,\alpha,\phi} = \omega^T_{t,\alpha} +\frac{1}{2}dd^c \phi, \]
with Reeb vector field $\xi+\alpha\in\mathfrak{z}^+,\ \eta_{t,\alpha,\phi} =\eta_{t,\xi+\alpha} +d^c \phi$ and
$\omega^T_{t,\alpha} =\frac{1}{2}d\eta_{t,\xi+\alpha}$.  Thus we have a family of Sasaki
metrics parametrized by a neighborhood of $(0,0,0)\in\mathcal{B}\times\mathfrak{z} \times C^\infty(M)^G$.  Assuming that
$g_{0,0,0}$ is Sasaki-extremal and satisfies $s^G_{g_{0,0,0}} =0$, we seek solutions to $s^G_{g_{t,\alpha,\phi}} =0$ for
$(t,\alpha,\phi)$ close to zero.  Using suitable Banach spaces, an application of the implicit function theorem gives the main
theorem.
\begin{thm}\label{thm:main}
Let $(g,\xi,\eta,\Phi)$ be a Sasaki-extremal structure satisfying $s_g^G =0$.  Suppose $G\subseteq G'=\Aut(\xi,\eta,\Phi,g)_0$
is a connected compact subgroup and $(\mathscr{F}_{\xi}, \ol{J}_t)_{t\in\mathcal{B}}$ a $G$-equivariant deformation of $(1,1)$-type.  
If the Futaki invariant relative to $G$ is nondegenerate $g$, then there is a small neighborhood of zero $W\subset\mathcal{B}\times\mathfrak{z}\times C^\infty(M)^G$ and a smooth closed submanifold $V\subset W$, with $\dim V =\dim_{\R}\mathcal{B}+\dim\mathfrak{z}$ so that for $(t,\alpha,\phi)\in V$ there is Sasaki metric $g_{t,\alpha,\phi}$ satisfying $s^G_{g_{t,\alpha,\phi}} =0$.  Therefore, there is a space of Sasaki-extremal metrics parametrized by $V$.
Furthermore, the projection $\pi:V\rightarrow\mathcal{B}$ is a submersion with fibers of dimension $\dim\mathfrak{z}$.
\end{thm}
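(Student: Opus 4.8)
The plan is to realize the reduced scalar curvature as a smooth map of Banach spaces, solve the surjective part by the implicit function theorem, and then reduce to a finite-dimensional equation on the cokernel that the nondegeneracy hypothesis makes transverse. First I would pass to weighted Hölder (or Sobolev) completions $C^{k+4,\beta}(M)^G\to C^{k,\beta}(M)^G$ of the $G$-invariant basic functions. Since the deformation is of $(1,1)$-type and $G$-equivariant, the family $(g_{t,\alpha,\phi},\eta_{t,\alpha,\phi},\xi+\alpha,\Phi_{t,\alpha,\phi})\in\mathcal{S}(\xi+\alpha,\ol{J}_t)^G$ exists and depends smoothly on $(t,\alpha,\phi)$ near the origin, so I may define a smooth map
\[\mathcal{N}(t,\alpha,\phi)=s^G_{g_{t,\alpha,\phi}},\qquad \mathcal{N}(0,0,0)=0,\]
between these completions. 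The linearization $L:=D_\phi\mathcal{N}|_{0}$ coincides, up to a nonzero constant, with the relative Lichnerowicz operator of $g$: it is fourth-order, elliptic and self-adjoint on $G$-invariant basic functions, and its kernel $K=\ker L$ is exactly the space of (real) holomorphy potentials of transversely holomorphic fields commuting with $G$, modulo those coming from $\mathfrak{g}$, so that $K\cong\mathfrak{z}'/\mathfrak{z}=\mathfrak{p}/\mathfrak{g}$. Self-adjointness gives the $L^2$-orthogonal splitting of the target as $\operatorname{im}L\oplus K$, and I write $\Pi$ for the projection onto the cokernel $K$.

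Second, I would perform a Lyapunov–Schmidt reduction. Because $L|_{K^\perp}$ is a Banach-space isomorphism onto $\operatorname{im}L=K^\perp$, the implicit function theorem solves $(\mathrm{Id}-\Pi)\mathcal{N}(t,\alpha,\phi)=0$ uniquely for the complementary part of $\phi$: there is a smooth map $(t,\alpha,\phi_0)\mapsto\psi(t,\alpha,\phi_0)\in K^\perp$, with $\psi(0,0,0)=0$, so that $\phi=\phi_0+\psi$ solves this equation. Substituting back produces the finite-dimensional reduced map
\[F:\mathcal{B}\times\mathfrak{z}\times K\longrightarrow K,\qquad F(t,\alpha,\phi_0)=\Pi\,\mathcal{N}\bigl(t,\alpha,\phi_0+\psi(t,\alpha,\phi_0)\bigr),\]
and near the origin $\mathcal{N}=0$ is equivalent to $F=0$. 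Elliptic regularity applied to the equation $s^G_{g_{t,\alpha,\phi}}=0$ bootstraps every solution $\phi$ to be smooth, so the resulting metrics are genuine smooth Sasaki-extremal metrics.

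Third, I would compute $DF|_{0}$ and invoke the Futaki hypothesis. Differentiating the defining relation of $\psi$ and using $L|_K=0$ gives $D_{\phi_0}\psi|_{0}=0$, whence $D_{\phi_0}F|_{0}=\Pi L|_K=0$; thus $DF|_{0}(dt,d\alpha,d\phi_0)=\Pi\bigl(D_t\mathcal{N}\,dt+D_\alpha\mathcal{N}\,d\alpha\bigr)$, and the $\phi_0$-directions contribute nothing. The decisive identity links the Reeb variation to the relative Futaki invariant: for $v\in K\cong\mathfrak{p}/\mathfrak{g}$ with holomorphy potential $u_v$ and $\alpha\in\mathfrak{z}$, the normalization $\mathcal{F}_{G,\xi+\alpha}(v)=\langle s^G_{g_{0,\alpha,0}},u_v\rangle_{L^2}$ differentiated at $\alpha=0$ has its $\langle s^G,\,\cdot\,\rangle$ boundary term killed by $s_g^G=0$, leaving $\langle\Pi\,D_\alpha\mathcal{N},u_v\rangle_{L^2}=\bigl(d_\xi\mathcal{F}_{G,\xi}(v)\bigr)(\alpha)$ up to a nonzero constant. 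Hence $\alpha\mapsto\Pi\,D_\alpha\mathcal{N}\in K$ is, under the identification $K\cong K^*$, the transpose of the map (\ref{eq:Futaki-dif}) $v\mapsto d_\xi\mathcal{F}_{G,\xi}(v)\in\mathfrak{z}^*$. Nondegeneracy is injectivity of the latter, so the former is surjective onto $K$; therefore $DF|_{0}$ is surjective, $0$ is a regular value, and $V=F^{-1}(0)$ is a smooth closed submanifold of dimension $\dim_{\R}\mathcal{B}+\dim\mathfrak{z}+\dim K-\dim K=\dim_{\R}\mathcal{B}+\dim\mathfrak{z}$.

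Finally, the claims about $\pi:V\to\mathcal{B}$ follow from the same surjectivity. For fixed $t$, the partial map $F(t,\cdot,\cdot):\mathfrak{z}\times K\to K$ is already submersive in the $\alpha$-variable, so $\pi^{-1}(t)$ is a submanifold of dimension $\dim\mathfrak{z}$; and given $dt\in T_0\mathcal{B}$ one chooses $d\alpha$ with $\Pi D_\alpha\mathcal{N}\,d\alpha=-\Pi D_t\mathcal{N}\,dt$, yielding a tangent vector of $V$ mapping onto $dt$, so $\pi$ is a submersion. The \emph{main obstacle} is the third step: establishing the precise proportionality between the cokernel-projection of the Reeb variation of $s_g^G$ and the derivative of the relative Futaki invariant, and arranging the Banach-space framework so that self-adjointness of the relative Lichnerowicz operator identifies the cokernel with $K\cong\mathfrak{p}/\mathfrak{g}$. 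Once this and the smooth dependence of $g_{t,\alpha,\phi}$ are in place, the rest is a routine implicit-function-theorem argument.
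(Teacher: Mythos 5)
Your overall strategy is the paper's own: the same family $g_{t,\alpha,\phi}$ parametrized by $\mathcal{B}\times\mathfrak{z}\times(\text{functions})$, the same linearization (the $\phi$-derivative is $-2\mathbb{L}^b_g$ and the $\alpha$-derivative is the formula of Lemma~\ref{lem:red-scal-var}), the same identification of the kernel with real holomorphy potentials giving $\mathfrak{z}'/\mathfrak{z}\cong\mathfrak{p}/\mathfrak{g}$, and the same duality identity --- pairing a cokernel element $\psi$ with $D_\alpha s^G_g$ equals $D_g\mathcal{F}_{G,\xi,X}(\alpha)$ for $X=\ol{J}\grad\psi$, valid because $s^G_g=0$ --- followed by an inverse function theorem. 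Your Lyapunov--Schmidt reduction is an equivalent repackaging of the paper's augmented map $\mathscr{S}\times\pi$ in Theorem~\ref{thm:mainthm}, with your third step playing exactly the role of Proposition~\ref{prop:def-map-sub}.

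There is, however, one genuine gap, and it sits precisely at the point you flag as the ``main obstacle'' without resolving it. You take $\phi$ in (a completion of) all $G$-invariant functions and assert that self-adjointness gives a splitting of the target as $\operatorname{im}L\oplus K$ with $K\cong\mathfrak{p}/\mathfrak{g}$. On the full space this is false: $\ker\mathbb{L}^b_g$ on $G$-invariant functions is $\sqrt{-1}\mathcal{H}^{\mathfrak{z}'}_g$, which contains $\sqrt{-1}\mathcal{H}^{\mathfrak{z}}_g$ (dimension $\dim\mathfrak{z}$, including the constants), so the true cokernel is $\sqrt{-1}\mathcal{H}^{\mathfrak{z}'}_g$, strictly larger than your $K$. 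This failure is not cosmetic: for every $G$-invariant Sasaki structure in the family one has $\int_M v_X\, s^G\, d\mu=0$ for all $X\in\mathfrak{z}$ (this is the definition of $s^G$ as the projection away from the \emph{moving} space $\sqrt{-1}\mathcal{H}^{\mathfrak{z}}$, equivalently (\ref{eq:rel-Futaki-form}) together with $\mathcal{F}_{G,\xi}|_{\mathfrak{g}}\equiv 0$); differentiating this identity at the origin and using $s^G_g=0$ shows that \emph{every} first derivative of $\mathcal{N}$ --- in $t$, in $\alpha$, and in $\phi$ --- is $L^2$-orthogonal to $\sqrt{-1}\mathcal{H}^{\mathfrak{z}}_g$. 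Hence with the full target your reduced differential can never be surjective onto the actual cokernel, no matter what nondegeneracy is assumed. The cure is the paper's device in (\ref{eq:def-map}): restrict $\phi$ to $W_{k+4,0}$, the orthogonal complement of $\sqrt{-1}\mathcal{H}^{\mathfrak{z}}_g$ in $L^2_{k+4,G}(M)$, and compose the equation with the fixed projection $\pi^W_0$; this is legitimate because $s^G_{t,\alpha,\phi}$ takes values in the moving subspace $W_{k,t,\alpha,\phi}$ and $\pi^W_0:W_{k,t,\alpha,\phi}\to W_{k,0}$ is an isomorphism near zero, so $\pi^W_0(s^G_{t,\alpha,\phi})=0$ is equivalent to $s^G_{t,\alpha,\phi}=0$. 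With target $W_{k,0}$ the cokernel is indeed $K\cong\mathfrak{p}/\mathfrak{g}$, and the rest of your argument --- including the smoothness bootstrap, which here is transversally elliptic regularity applied in foliation charts as in the paper --- goes through as in Proposition~\ref{prop:def-map-sub} and Theorem~\ref{thm:mainthm}.
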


Unfortunately, the nondegeneracy of the relative Futaki invariant is not an easy condition to work with, and from
(\ref{eq:Futaki-dif}) we see that $\mathfrak{z}$ must be sufficiently large in $\mathfrak{g}$ for it to hold.  Fortunately,
if $G= T\subset G'$ is a maximal torus, then the relative Futaki invariant is trivially nondegenerate as $\mathfrak{p}/\mathfrak{g} =0$.
\begin{cor}\label{cor:main-tor}
Let $(g,\eta,\xi,\Phi)$ be a Sasaki-extremal structure satisfying $s_g^G =0$.  Suppose that
$(\mathscr{F}_{\xi}, \ol{J}_t)_{t\in\mathcal{B}}$ is a
$G$-equivariant deformation of $(1,1)$-type, where $G\subseteq G'=\Aut(g,\eta,\xi,\Phi)_0$ is a maximal torus.  
Then there is a neighborhood of zero
$W\subset\mathfrak{B}\times\mathfrak{g}$, so that for $(t,\alpha)\in W$ there is Sasaki metric $g_{t,\alpha,\phi_{t,\alpha}}$ satisfying $s^G_{g_{t,\alpha,\phi_{t,\alpha}}} =0$.  So for each fixed $t\in\mathcal{B}$
close to zero, the space of extremal metrics is locally parametrized by a neighborhood of zero in $\mathfrak{g}$.
\end{cor}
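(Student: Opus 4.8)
The plan is to deduce this directly from Theorem~\ref{thm:main}, by checking that the nondegeneracy hypothesis is automatic for a maximal torus and then extracting the graph structure of the solution manifold $V$. First I would record the algebra. Since $G=T$ is a torus, the Lie algebra $\mathfrak{g}=\mathfrak{t}$ is abelian, so $\mathfrak{z}=Z(\mathfrak{g})=\mathfrak{g}$ and in particular $\dim\mathfrak{z}=\dim\mathfrak{g}$. The Reeb field $\xi$ is central in $\mathfrak{g}'$, hence lies in every maximal torus, so $\xi\in\mathfrak{g}$ and the standing requirement $\xi\in\mathfrak{g}$ of Theorem~\ref{thm:main} is met.

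Next I would verify that the relative Futaki invariant is vacuously nondegenerate. The normalizer of a maximal torus in the compact group $G'$ satisfies $N_{G'}(T)/T=W$, the finite Weyl group, so $\mathfrak{p}=N_{\mathfrak{g}'}(\mathfrak{g})=\mathfrak{t}=\mathfrak{g}$ (equivalently $\mathfrak{z}'=C_{\mathfrak{g}'}(\mathfrak{g})=\mathfrak{t}=\mathfrak{z}$, using the identity $\mathfrak{p}/\mathfrak{g}=\mathfrak{z}'/\mathfrak{z}$ recorded in the introduction). Thus $\mathfrak{p}/\mathfrak{g}=0$, the map (\ref{eq:Futaki-dif}) has trivial domain, and it is injective by default; so $\mathcal{F}_{T,\xi}$ is nondegenerate. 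Theorem~\ref{thm:main} then applies and produces a submanifold $V$ in a neighborhood $W'\subset\mathcal{B}\times\mathfrak{z}\times C^\infty(M)^G$ of zero, with $\dim V=\dim_{\R}\mathcal{B}+\dim\mathfrak{z}$ and with $\pi:V\rightarrow\mathcal{B}$ a submersion whose fibers have dimension $\dim\mathfrak{z}$.

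The remaining point is to see that $V$ is in fact the graph of a smooth map $(t,\alpha)\mapsto\phi_{t,\alpha}$ over a neighborhood of zero in $\mathcal{B}\times\mathfrak{z}=\mathcal{B}\times\mathfrak{g}$. Here I would return to the implicit function theorem argument underlying Theorem~\ref{thm:main}. The cokernel of the $\phi$-linearization of the reduced scalar curvature map at the base point, taken on the $G$-invariant Banach spaces, is precisely the obstruction space $\mathfrak{p}/\mathfrak{g}$; since this vanishes, that linearization is surjective. Comparing with the already-established value $\dim V=\dim_{\R}\mathcal{B}+\dim\mathfrak{z}$ forces its kernel to be trivial as well, so $D_\phi s^G$ at $(0,0,0)$ is an isomorphism. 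By the implicit function theorem with parameters $(t,\alpha)$, for each $(t,\alpha)$ in a neighborhood $W\subset\mathcal{B}\times\mathfrak{g}$ there is a unique small $\phi_{t,\alpha}\in C^\infty(M)^G$, depending smoothly on $(t,\alpha)$, with $s^G_{g_{t,\alpha,\phi_{t,\alpha}}}=0$. This exhibits $V$ as the graph $\{(t,\alpha,\phi_{t,\alpha})\}$ and yields the stated parametrization, the fiber over a fixed $t$ being the $\dim\mathfrak{g}$-dimensional family obtained by moving the Reeb field $\xi+\alpha$ within $\mathfrak{z}^+$.

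The main obstacle I expect is this last step: establishing that the $\phi$-linearization is an isomorphism, rather than merely quoting the submersion property of $\pi$, which by itself does not force $V$ to be a graph over $\mathcal{B}\times\mathfrak{g}$. This requires identifying the cokernel of the linearized reduced-scalar-curvature operator with $\mathfrak{p}/\mathfrak{g}$ and then using the value of $\dim V$ to rule out a kernel. Everything else is a formal specialization of Theorem~\ref{thm:main} to the maximal-torus case.
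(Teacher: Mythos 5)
Your proposal is correct and follows essentially the same route as the paper: nondegeneracy is vacuous because $\mathfrak{p}/\mathfrak{g}=\mathfrak{z}'/\mathfrak{z}=0$ for a maximal torus, and the graph structure over $\mathcal{B}\times\mathfrak{g}$ comes from showing the $\phi$-linearization $-2\mathbb{L}_g:W_{k+4,0}\to W_{k,0}$ is an isomorphism and invoking the inverse/implicit function theorem. The only cosmetic difference is that the paper gets injectivity directly from $\sqrt{-1}\mathcal{H}^{\mathfrak{z}'}_g=\sqrt{-1}\mathcal{H}^{\mathfrak{z}}_g$ (so the kernel $K$ of $D_g\mathscr{S}$ projects isomorphically onto $\mathfrak{g}$), whereas you deduce it from surjectivity plus the dimension count $\dim V=\dim_{\R}\mathcal{B}+\dim\mathfrak{z}$; both rest on the same identification of the kernel/cokernel with $\mathfrak{z}'/\mathfrak{z}$.
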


Just as in the K\"{a}hler-Einstein case~\cite{LebSim94} the linear map (\ref{eq:Futaki-dif}) is always trivial when $(g,\eta,\xi,\Phi)$ is Sasaki-Einstein.   Fortunately, Corollary~\ref{cor:main-tor} is still useful in obtaining new examples of Sasaki-Einstein metrics
when $G$ is a maximal torus.
In this case one can use the nondegeneracy of the Futaki invariant on $\mathfrak{g}$~\cite{MarSpaYau08,FutOnWan09} to show that
there is a neighborhood $W\subset\mathcal{B}\times\mathfrak{g}$ so that for each $(t,0)\in W$ there is a $\alpha_t\in\mathfrak{g}$ so that $g_{t,\alpha_t,\phi_t}$ is Sasaki-Einstein.
\begin{cor}\label{cor:main-Einst}
Let $(g,\eta,\xi,\Phi)$ be a Sasaki-Einstein structure, and suppose that $(\mathscr{F}_{\xi}, \ol{J}_t)_{t\in\mathcal{B}}$ is a
$G$-equivariant deformation, where $G\subseteq G'=\Aut(g,\eta,\xi,\Phi)_0$ is a maximal torus. Then there is a neighborhood
$U\subset\mathcal{B}$ so that for $t\in U$ there is a unique $\alpha_t \in\mathfrak{g}$ and a $\phi_t \in C^\infty(M)^G$ so that $g_{t,\alpha_t,\phi_t}$ is Sasaki-Einstein.
\end{cor}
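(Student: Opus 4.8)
The plan is to combine the existence of nearby Sasaki-extremal metrics furnished by Corollary~\ref{cor:main-tor} with the nondegeneracy of the Sasaki--Futaki invariant in the Einstein case, the latter serving to pin down, for each $t$, the one Reeb field in the cone for which the extremal solution is actually Sasaki--Einstein. The three moving pieces are: producing extremal metrics, selecting $\alpha$, and upgrading extremal to Einstein.

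First I would check the hypotheses of Corollary~\ref{cor:main-tor}. Since $(g,\eta,\xi,\Phi)$ is Sasaki--Einstein we have $\Ric_g>0$, so by the remark preceding Theorem~\ref{thm:main} the obstruction of~\cite{Noz11} vanishes near $0$ and the $G$-equivariant deformation $(\mathscr{F}_{\xi},\ol{J}_t)_{t\in\mathcal{B}}$ is automatically of $(1,1)$-type there; this is why no such hypothesis appears in the statement. As $G=T$ is a maximal torus, $\mathfrak{g}$ is abelian and $\mathfrak{z}=\mathfrak{g}$, so Corollary~\ref{cor:main-tor} supplies a neighborhood $W\subseteq\mathcal{B}\times\mathfrak{g}$ and, for every $(t,\alpha)\in W$, a Sasaki-extremal metric $g_{t,\alpha,\phi_{t,\alpha}}$ with $\phi_{t,\alpha}$ depending smoothly on $(t,\alpha)$ and $s^G_{g_{t,\alpha,\phi_{t,\alpha}}}=0$, which here (as $\mathfrak{p}/\mathfrak{g}=0$) just means Sasaki-extremal.

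Next I would select the Reeb field. The first Chern class $c_1(D)$ of the contact bundle is an integral homotopy invariant of the complex structure on $D$, hence constant along the deformation; as the initial cone is Gorenstein we have $c_1(D)=0$, so every $\ol{J}_t$ is Gorenstein, while the positivity $c_1^B(\ol{J}_t)>0$ persists for small $t$ as an open condition. Following \cite{MarSpaYau08,FutOnWan09}, I would restrict $\xi+\alpha$ to the affine slice of $\mathfrak{z}=\mathfrak{g}$ cut out by the charge normalization of the transverse holomorphic $(n+1,0)$-form $\Omega_t$, on which the basic classes satisfy $c_1^B(\ol{J}_t)=a_t[\omega^T]_B$ with $a_t>0$ and the normalized volume $\mathbf{V}_t$ is strictly convex with gradient a multiple of the Sasaki--Futaki invariant. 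Define $\mathcal{G}(t,\alpha)\in\mathfrak{g}^*$ by assembling this Futaki invariant with the charge functional; it depends only on $(\ol{J}_t,\xi+\alpha)$, not on the potential. Since $g$ is Sasaki--Einstein we have $\mathcal{G}(0,0)=0$, and $\partial_\alpha\mathcal{G}(0,0)$ is the positive-definite Hessian of $\mathbf{V}_0$ transverse to the charge direction, hence invertible; the implicit function theorem then yields a neighborhood $U\subseteq\mathcal{B}$ of $0$ and a unique smooth $t\mapsto\alpha_t$, $\alpha_0=0$, with $\mathcal{G}(t,\alpha_t)=0$.

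Finally I would verify that the selected metric is Einstein. At the Reeb field $\xi+\alpha_t$ the Futaki invariant vanishes, so the extremal field of the $T$-invariant metric $g_{t,\alpha_t,\phi_{t,\alpha_t}}$ — which lies in $\mathfrak{g}$ because $T$ is a maximal torus — is annihilated by it, forcing $\partial^{\#}s=0$ and hence constant transverse scalar curvature. Combined with the proportionality $c_1^B(\ol{J}_t)=a_t[\omega^T]_B$ from the slice, writing $\rho^T-a_t'\omega^T=i\partial\ol{\partial}f$ for a basic $f$ and taking transverse traces shows $\Delta_B f$ is constant, whence $f$ is constant on the compact leaf space and $\rho^T=a_t'\omega^T$; the metric is transversely K\"{a}hler--Einstein, i.e. Sasaki--Einstein, with $\phi_t=\phi_{t,\alpha_t}$, and uniqueness of $\alpha_t$ is inherited from the strict convexity of $\mathbf{V}_t$. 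I expect the main obstacle to be exactly this last alignment: arranging that the single nondegenerate condition $\mathcal{G}=0$ simultaneously encodes the cohomological proportionality $c_1^B\propto[\omega^T]$ and the vanishing of the obstruction to constant scalar curvature, so that the extremal family of Corollary~\ref{cor:main-tor} meets the Sasaki--Einstein locus transversally along $t\mapsto(t,\alpha_t)$.
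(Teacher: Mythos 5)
Your proposal is correct and is essentially the paper's own argument: both restrict the Reeb field to the characteristic hyperplane determined by the charge of the holomorphic $(m+1,0)$-form, use the Martelli--Sparks--Yau volume functional (whose gradient on that slice is the Futaki invariant restricted to $\mathfrak{g}$ and whose Hessian is positive definite) to select $\alpha_t$, and then upgrade extremal to Einstein by evaluating the vanishing Futaki invariant on the extremal field $\ol{J}\grad s$, forcing constant scalar curvature and hence $\rho^T$ proportional to $\omega^T$. The only difference is packaging: the paper runs a single implicit function theorem on the combined map $\tilde{\mathscr{S}}(t,\alpha,\phi)=\bigl(t,\mathcal{F}(\alpha),\pi^W_{0}(s^G_{t,\alpha,\phi})\bigr)$ with $\alpha$ confined to the slice $\mathcal{Q}$, whose linearization is block-triangular precisely because the volume/Futaki term is independent of the potential $\phi$, whereas you invert the same two blocks sequentially --- first Corollary~\ref{cor:main-tor} for the extremal potentials $\phi_{t,\alpha}$, then a finite-dimensional implicit function theorem for $\alpha_t$.
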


In the final section Corollary~\ref{cor:main-Einst} is applied to give a new family of Sasaki-Einstein metrics by deforming
the underlying Sasaki-Einstein metric on toric 3-Sasaki 7-manifolds.  These manifolds were studied in~\cite{BGMR98} as torus 3-Sasaki
quotients of spheres where it was proved that there are infinitely many of each Betti number $b_2 (M)\geq 1$.  Note that
they are not toric as Sasaki-Einstein manifolds.  If $b_2(M)\geq 2$, then for a fixed Sasaki structure
$G=T^3=\Aut(g,\xi,\eta,\Phi)_0$ is a 3-torus.
It was calculated by the author in~\cite{vanC12} that
$H^1_{\ol{\partial}_b}(\mathcal{A}^{0,\bullet})=H^1_{\ol{\partial}_b}(\mathcal{A}^{0,\bullet})^{T^3} = b_2 (M) -1$ giving a smooth
versal deformation space.  Thus Corollary~\ref{cor:main-tor} gives a neighborhood $W\subset\C^{b_2(M)-1}\times\mathfrak{g}$ of zero
parametrizing a space of Sasaki-extremal metrics.  And furthermore, Corollary~\ref{cor:main-Einst} gives
a slice of Sasaki-Einstein metrics.  There is a neighborhood $U\subset\C^{b_2(M)-1}$ so that for $t\in U$ there is
an $\alpha_t \in\mathfrak{g}$ and $\phi_t\in C^\infty(M)^G$ so that $g_{t,\alpha_t,\phi_t}$ is Sasaki-Einstein and contained in a real 3-dimensional space of Sasaki-extremal metrics.  Note that the Einstein metrics in this family have three different
isometry groups as shown in Figure~\ref{fig:def-space}.

These examples provide the first examples, to the author's knowledge, of deformations of 3-Sasaki metrics to metrics which are
Sasaki-Einstein but not 3-Sasaki.  These are also examples of Einstein manifolds admitting 3 Killing spinors with deformations to 
Einstein metrics admitting only 2 Killing spinors.  More details will appear in~\cite{vanC12}.

\subsection{Acknowledgements}
I would like to thank the Max Planck Institute for Mathematics for their hospitality and excellent research environment
that I enjoyed while writing this article.

\section{Background}

\subsection{Sasaki manifolds}

\begin{defn}
A Riemannian manifold $(M,g)$ is a \emph{Sasaki manifold}, or has a compatible Sasaki structure, if the metric cone
$(C(M),\ol{g})=(\R_{>0} \times M, dr^2 +r^2 g)$ is K\"{a}hler with respect to some complex structure $I$, where $r$ is the
usual coordinate on $\R_{>0}$.
\end{defn}
Thus $M$ is odd and denoted $n=2m+1$, while $C(M)$ is a complex manifold with $\dim_{\C} C(M) =m+1$.

Although, this is the simplest definition, Sasaki manifolds were originally defined as a special type of metric contact
structure.  See the monograph~\cite{BoyGal08} or~\cite{FutOnWan09} for more details on the properties of Sasaki manifolds that we
summarize below.  We will identify $M$ with the $\{1\}\times M\subset C(M)$.  Let $r\partial_r$ be the Euler vector field on $C(M)$,
then it is easy to see that $\xi =Ir\partial_r$ is tangent to $M$.  Using the warped product formulae for the cone metric
$\ol{g}$~\cite{ONeil83} it is easy check that $r\partial_r$ is real holomorphic, $\xi$ is Killing with respect to both $g$ and
$\ol{g}$, and furthermore the orbits of $\xi$ are geodesics on $(M,g)$.
Define $\eta =\frac{1}{r^2}\xi\contr\ol{g}$, then we have
\begin{equation}
\eta =-\frac{I^* dr}{r} =d^c \log r,
\end{equation}
where $d^c =\sqrt{-1}(\ol{\partial} -\partial)$.  If $\omega$ is the K\"{a}hler form of $\ol{g}$, i.e.
$\omega(X,Y) =\ol{g}(IX,Y)$, then $\mathcal{L}_{r\partial_r} \omega =2\omega$ which implies that
\begin{equation}\label{eq:Kaehler-pot1}
\omega =\frac{1}{2}d(r\partial_r \contr\omega) =\frac{1}{2}d(r^2 \eta)=\frac{1}{4}dd^c(r^2).
\end{equation}
From (\ref{eq:Kaehler-pot1}) we have
\begin{equation}\label{eq:Kaehler-pot2}
\omega=rdr\wedge\eta +\frac{1}{2}r^2 d\eta.
\end{equation}

We will use the same notation to denote $\eta$ and $\xi$ restricted to $M$.  Then (\ref{eq:Kaehler-pot2}) implies that
$\eta$ is a contact form with Reeb vector field $\xi$, since $\eta(\xi)=1$ and $\mathcal{L}_{\xi} \eta =0$.
Let $D\subset TM$ be the contact distribution which is defined by
\begin{equation}
D_x =\ker\eta_x
\end{equation}
for $x\in M$.  Furthermore, if we restrict the almost complex structure to $D$, $J:=I|_D$, then $(D,J)$ is a strictly pseudoconvex CR structure on $M$.  We have a splitting of the tangent bundle $TM$
\begin{equation}
TM =D\oplus L_{\xi},
\end{equation}
where $L_{\xi}$ is the trivial subbundle generated by $\xi$.  It will be convenient to define a tensor $\Phi\in\End(TM)$ by
$\Phi|_D =J$ and $\Phi(\xi) =0$.  Then
\begin{equation}\label{eq:comp-tens}
\Phi^2 =-\mathbb{1} +\eta\otimes\xi.
\end{equation}
Since $\xi$ is Killing, we have
\begin{equation}
d\eta (X,Y) =2 g(\Phi(X),Y), \quad\text{where }X,Y\in\Gamma(TM),
\end{equation}
and $\Phi(X) =\nabla_X \xi$, where $\nabla$ is the Levi-Civita connection of $g$.  Making use of (\ref{eq:comp-tens}) we see
that
\[ g(\Phi X,\Phi Y) =g(X,Y) -\eta(X)\eta(Y), \]
and one can express the metric by
\begin{equation}\label{eq:metric}
g(X,Y)=\frac{1}{2}(d\eta)(X,\Phi Y) +\eta(X)\eta(Y).
\end{equation}

We will denote a Sasaki structure on $M$ by $(g,\eta,\xi,\Phi)$.  Although, the reader can check that merely specifying
$(g,\xi),\ (g,\eta),$ or $(\eta, \Phi)$ is enough to determine the Sasaki structure, it will be convenient to denote the
remaining structure.

The \emph{Reeb} foliation $\mathscr{F}_\xi$ on $M$ generated by the action of $\xi$ will be important in the sequel.
Note that it has geodesic leaves and is a Riemannian foliation, that is has a $\xi$ invariant Riemannian metric on the
normal bundle $\nu(\mathscr{F}_\xi)$.  But in general the leaves are not compact.  If the leaves are compact, or equivalently
$\xi$ generates an $S^1$-action, then $(g,\eta,\xi,\Phi)$ is said to be a \emph{quasi-regular} Sasaki structure, otherwise it is
irregular.  If this $S^1$ action is free, then $(g,\eta,\xi,\Phi)$ is said to be \emph{regular}.  In this last case
$M$ is an $S^1$-bundle over a manifold $Z$, which we will see below is K\"{a}hler.  If the structure if merely quasi-regular, then
the leaf space has the structure of a K\"{a}hler orbifold $Z$.
In general, in the irregular case, the leaf space is not even Hausdorff but we will make use of the transversally K\"{a}hler
property of the foliation $\mathscr{F}_\xi$ which we discuss next.

\subsection{Transverse K\"{a}hler structure}

We now describe a transverse K\"{a}hler structure on $\mathscr{F}_{\xi}$.
The vector field $\xi -\sqrt{-1}I\xi =\xi +\sqrt{-1}r\partial_r$ is holomorphic on $C(M)$.  If we denote by $\tilde{\C}^*$ the
universal cover of $\C^*$, then $\xi +\sqrt{-1}r\partial_r$ induces a holomorphic action
of $\tilde{\C}^*$ on $C(M)$.  The orbits of $\tilde{\C}^*$ intersect $M\subset C(M)$ in the orbits of the Reeb foliation
generated by $\xi$.  We denote the Reeb foliation by $\mathscr{F}_\xi$.  This gives $\mathscr{F}_\xi$ a transversely holomorphic
structure.

The foliation $\mathscr{F}_{\xi}$ together with its transverse holomorphic structure is given by an open covering
$\{U_\alpha \}_{\alpha\in A}$ and submersions $\pi_\alpha :U_\alpha \rightarrow W_\alpha \subset\C^m$ such that
when $U_\alpha \cap U_\beta \neq\emptyset$ the map
\[\phi_{\beta\alpha} =\pi_\beta \circ\pi_\alpha^{-1} :\pi_{\alpha}(U_\alpha \cap U_\beta) \rightarrow\pi_{\beta}(U_\alpha \cap U_\beta) \]
is a biholomorphism.

Not that on $U_\alpha$ the differential $d\pi_\alpha :D_x \rightarrow T_{\pi_\alpha(x)}W_\alpha$ at $x\in U_\alpha$ is
an isomorphism taking the almost complex structure $J_x$ to that on $T_{\pi_\alpha(x)}W_\alpha$.
Since $\xi\contr d\eta =0$ the 2-form $\frac{1}{2}d\eta$ descends to a form $\omega_\alpha^T$ on $W_\alpha$.  Similarly,
$g^T =\frac{1}{2}d\eta(\cdot,\Phi\cdot)$ satisfies $\mathcal{L}_\xi g^T =0$ and vanishes on vectors tangent to the leaves, so
it descends to an Hermitian metric $g^T_\alpha$ on $W_\alpha$ with K\"{a}hler form $\omega_\alpha^T$.  The K\"{a}hler metrics
$\{g_\alpha ^T \}$ and K\"{a}hler forms $\{\omega_\alpha^T \}$ on $\{ W_\alpha\}$ by construction are isomorphic on the overlaps
\[ \phi_{\beta\alpha} : \pi_{\alpha}(U_\alpha \cap U_\beta) \rightarrow\pi_{\beta}(U_\alpha \cap U_\beta).\]
We will use $g^T$, respectively $\omega^T$, to denote both the K\"{a}hler metric, respectively K\"{a}hler form, on the the
local charts and the globally defined pull-back on $M$.

If we define $\nu(\mathscr{F}_\xi) =TM/{L_\xi}$ to be the normal bundle to the leaves, then we can generalize the above concept.
A tensor $\Psi\in\Gamma\bigl((\nu(\mathscr{F}_\xi)^*)^{\otimes p} \bigotimes\nu(\mathscr{F}_\xi)^{\otimes q}\bigr)$ is \emph{basic} if
$\mathcal{L}_V \Psi =0$ for any vector field $V\in\Gamma(L_\xi)$.  It is sufficient to check this for $V=\xi$.
Then $g^T$ and $\omega^T$ are such tensors on $\nu(\mathscr{F}_\xi)$.  We will also make use of the bundle isomorphism
$\pi:D \rightarrow\nu(\mathscr{F}_\xi)$, which induces an almost complex structure $\ol{J}$ on $\nu(\mathscr{F}_\xi)$ so that
$(D,J)\cong(\nu(\mathscr{F}_\xi),\ol{J})$ as complex vector bundles.  Clearly, $\ol{J}$ is basic and is mapped to the
almost complex structure by local charts $d\pi_\alpha :D_x \rightarrow T_{\pi_\alpha(x)}W_\alpha$.

To work on the K\"{a}hler leaf space we define the Levi-Civita connection of $g^T$ by
\begin{equation}
\nabla^T_X Y =\begin{cases}
\pi_\xi(\nabla_X Y) & \text{ if }X, Y\text{ are smooth sections of }D, \\
\pi_\xi([V,Y]) & \text{ if } X=V\text{ is a smooth section of }L_\xi,
\end{cases}
\end{equation}
where $\pi_\xi :TM \rightarrow D$ is the orthogonal projection onto $D$.  Then $\nabla^T$ is the unique torsion free connection
on $D\cong\nu(\mathscr{F}_\xi)$ so that $\nabla^T g^T=0$.  Then for $X,Y\in\Gamma(TM)$ and $Z\in\Gamma(D)$ we have the
curvature of the transverse K\"{a}hler structure
\begin{equation}
R^T(X,Y)Z =\nabla^T_X \nabla^T_Y Z -\nabla^T_Y \nabla^T_X Z -\nabla^T_{[X,Y]} Z,
\end{equation}
and similarly we have the transverse Ricci curvature $\Ric^T$ and scalar curvature $s^T$.  We will denote the
transverse Ricci form by $\rho^T$.

The following follows from O'Neill tensor computations for a Riemannian submersion.  See~\cite{ONeil66} and~\cite[Ch. 9]{Bess87}.
\begin{prop}\label{prop:Sasaki-Ric}
Let $(M,g,\eta,\xi,\Phi)$ be a Sasaki manifold of dimension $n=2m+1$, then
\begin{thmlist}
\item  $\Ric_g (X,\xi) =2m\eta(X),\quad\text{for }X\in\Gamma(TM)$,\label{eq:submer-Ric-Reeb}
\item  $\Ric^T (X,Y) =\Ric_g (X,Y) +2g^T(X,Y),\quad\text{for }X,Y\in\Gamma(D),$
\item  $s^T =s_g +2m.$\label{eq:submer-scal}
\end{thmlist}
\end{prop}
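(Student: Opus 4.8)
The plan is to realise the Reeb foliation locally as a Riemannian submersion and apply O'Neill's curvature formulas, as in~\cite{ONeil66} and~\cite[Ch.~9]{Bess87}. On each chart the submersion $\pi_\alpha:U_\alpha\rightarrow W_\alpha$ has horizontal distribution $D$ and one-dimensional vertical distribution $L_\xi$. Since the orbits of $\xi$ are geodesics, the fibres are totally geodesic and O'Neill's second-fundamental-form tensor $T$ vanishes identically; thus only the integrability tensor $A$ contributes. Using $\nabla_X\xi=\Phi X$ together with $d\eta(X,Y)=2g(\Phi X,Y)$, I would compute $A$ explicitly. For horizontal $X,Y$ the vertical projection satisfies $\mathcal{V}[X,Y]=\eta([X,Y])\xi=-d\eta(X,Y)\xi$, so $A_X Y=\tfrac12\mathcal{V}[X,Y]=-g(\Phi X,Y)\xi$, while for the vertical direction $A_X\xi=\mathcal{H}(\nabla_X\xi)=\Phi X$.

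The key algebraic input is that for an orthonormal frame $\{E_i\}$ of $D$ one has $\sum_i g(A_X E_i,A_Y E_i)=\sum_i g(\Phi X,E_i)g(\Phi Y,E_i)=g(\Phi X,\Phi Y)=g^T(X,Y)$ for $X,Y\in\Gamma(D)$, the final step using $g(\Phi X,\Phi Y)=g(X,Y)-\eta(X)\eta(Y)$ with $\eta(X)=\eta(Y)=0$. Feeding this into the horizontal component of O'Neill's Ricci identity, $\Ric_g(X,Y)=\Ric^T(X,Y)-2\sum_i g(A_X E_i,A_Y E_i)$, yields (ii) at once. The same data in the vertical component gives $\Ric_g(\xi,\xi)=\sum_i |A_{E_i}\xi|^2=\sum_i|\Phi E_i|^2=2m$, the fibre-Ricci term being absent because the leaves are one-dimensional.

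The step I expect to require the most care is the off-diagonal part of (i), namely $\Ric_g(X,\xi)=0$ for $X\in\Gamma(D)$: in O'Neill's scheme this comes from the mixed formula, which a priori involves the codifferential $\delta A$ of the integrability tensor and so demands that one control $\nabla A$. The cleanest way to sidestep this is to use the Sasaki structure equation $(\nabla_X\Phi)Y=\eta(Y)X-g(X,Y)\xi$, which follows from $\nabla_X\xi=\Phi X$ and $\Phi^2=-\mathbb{1}+\eta\otimes\xi$; differentiating gives $R(X,Y)\xi=(\nabla_X\Phi)Y-(\nabla_Y\Phi)X=\eta(Y)X-\eta(X)Y$, and tracing $\Ric_g(Y,\xi)=\sum_i g(R(E_i,Y)\xi,E_i)$ over a full orthonormal frame collapses to $2m\eta(Y)$, establishing all of (i) simultaneously.

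Finally (iii) is obtained by tracing (ii) over $D$ and adding the Reeb direction. Since $\sum_i g^T(E_i,E_i)=2m$, one gets $s_g=\Ric_g(\xi,\xi)+\sum_i\Ric_g(E_i,E_i)=2m+\bigl(s^T-2\cdot 2m\bigr)=s^T-2m$, which rearranges to $s^T=s_g+2m$. The only genuine obstacle is matching O'Neill's sign and normalisation conventions for $A$ and the factor $-2$; once the identification $\sum_i g(A_X E_i,A_Y E_i)=g^T(X,Y)$ is pinned down, the three identities are pure bookkeeping.
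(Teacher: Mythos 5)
Your proposal is correct, and its backbone---realising $\mathscr{F}_\xi$ locally as a Riemannian submersion with totally geodesic one-dimensional fibres, computing the O'Neill tensors ($T\equiv 0$, $A_XY=-g(\Phi X,Y)\xi$, $A_X\xi=\Phi X$, hence $\sum_i g(A_XE_i,A_YE_i)=g^T(X,Y)$), and feeding them into the Ricci identities of~\cite{ONeil66} and~\cite[Ch.~9]{Bess87}---is precisely the paper's proof, which consists of nothing more than that citation. Where you genuinely depart is part (i): instead of invoking O'Neill's mixed horizontal--vertical Ricci formula, whose $\check{\delta}A$ term would force you to control $\nabla A$, you derive $R(X,Y)\xi=\eta(Y)X-\eta(X)Y$ from the Sasaki structure equation $(\nabla_X\Phi)Y=\eta(Y)X-g(X,Y)\xi$ and trace over a full orthonormal frame, which yields all of (i) (including $\Ric_g(\xi,\xi)=2m$) in one stroke. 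This hybrid is a real gain in self-containedness: the submersion formalism gives (ii) and then (iii) by pure bookkeeping, while the structure-equation identity disposes of the mixed terms without touching the divergence of $A$---the fiddliest ingredient of a purist O'Neill treatment. Your signs are consistent with the paper's conventions ($\nabla_X\xi=\Phi X$, $d\eta(X,Y)=2g(\Phi X,Y)$, $g(\Phi X,\Phi Y)=g(X,Y)-\eta(X)\eta(Y)$), and the final trace $s_g=2m+(s^T-4m)$ is right, so the argument goes through as written.
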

\begin{defn}
A \emph{Sasaki-Einstein} manifold $(M,g,\eta,\xi,\Phi)$ is a Sasaki manifold with
\[ \Ric_g =2m\, g.\]
\end{defn}
Note that by (\ref{eq:submer-Ric-Reeb}) the Einstein constant must be $2m$, and the transverse K\"{a}hler metric is
also Einstein
\begin{equation}
\Ric^T =(2m+2)\, g^T.
\end{equation}
Conversely, if one has a Sasaki structure $(g,\eta,\xi,\Phi)$ with $\Ric^T =\tau\, g^T$ with $\tau>0$, then after a
$D$-homothetic transformation one has a Sasaki-Einstein structure $(g',\eta',\xi',\Phi)$, where 
$\eta'=a\eta,\ \xi'=a^{-1}\xi$, and $g'=ag +a(a-1)\eta\otimes\eta$, with $a=\frac{\tau}{2m+2}$.

Let $\mathcal{S}(\xi)$ be the space of Sasaki structures $(\tilde{g},\tilde{\eta},\tilde{\xi},\tilde{\Phi})$ on $M$ with
$\tilde{\xi}=\xi$.  For any $(\tilde{g},\tilde{\eta},\tilde{\xi},\tilde{\Phi})\in\mathcal{S}(\xi)$ the 1-form
$\beta=\tilde{\eta}-\eta$ is basic, so $[d\tilde{\eta}]_b=[d\eta]_b$, where $[\,\cdot\,]_b$ denotes the basic cohomology
class of a basic closed form.  Thus $[\omega^T]_b \in H^2_b(M/\mathscr{F}_\xi,\R)$ is the same for every 
Sasaki structure in $\mathcal{S}(\xi)$.
Thus, as first observed in~\cite{BoyGalSim08}, fixing the Reeb vector field is the closest analogue to a polarization in K\"{a}hler geometry, and we say that the Reeb vector field $\xi$ \emph{polarizes} the Sasaki manifold.

For a fixed Reeb vector field $\xi$, we consider a fixed transversal complex structure on $\mathscr{F}_\xi$ which is
equivalent to fixing $\ol{J}$ on $\nu(\mathscr{F}_\xi)$.  We define $\mathcal{S}(\xi,\ol{J})\subset\mathcal{S}(\xi)$
to be the subset of Sasaki structures inducing the same complex normal bundle $(\nu(\mathscr{F}_\xi),\ol{J})$, 
in other words, the set of $(\tilde{g},\tilde{\eta},\xi,\tilde{\Phi})\in\mathcal{S}(\xi)$ such that the following diagram
commutes
\begin{equation}\label{eq:trans-CD}
\begin{CD}
TM @>{\tilde{\Phi}}>> TM \\
@VVV            @VVV \\
\nu(\mathscr{F}_\xi) @>{\ol{J}}>> \nu(\mathscr{F}_\xi).
\end{CD}
\end{equation}

We will consider three different deformations of a Sasaki structure.  First we consider \emph{transverse K\"{a}hler}
deformations.
\begin{lem}[\cite{BoyGal08,BoyGalSim08}]\label{lem:trans-def}
The space $\mathcal{S}(\xi,\ol{J})$ of all Sasaki structures with Reeb vector field $\xi$ and transverse holomorphic
structure $\ol{J}$  is an affine space modeled on $C^\infty_b (M)/\R \times C^\infty_b (M)/\R \times H^1(M,\R)$.
If $(g,\eta,\xi,\Phi)\in\mathcal{S}(\xi,\ol{J})$ is a fixed Sasaki structure then another structure
$(\tilde{g},\tilde{\eta},\tilde{\xi},\tilde{\Phi})\in\mathcal{S}(\xi,\ol{J})$ is determined by real basic functions
$\phi$ and $\psi$ and an harmonic, with respect to $g$, 1-form $\alpha$ such that
\begin{equation}\label{eq:trans-def}
\begin{split}
\tilde{\eta} & =\eta + d^c \phi +d\psi +\alpha, \\
\tilde{\Phi} & =\Phi -\xi\otimes\tilde{\eta}\circ\Phi,\\
\tilde{g}    & =\frac{1}{2}d\tilde{\eta}\circ(\mathbb{1}\otimes\tilde{\Phi}) +\tilde{\eta}\otimes\tilde{\eta},
\end{split}
\end{equation}
and the transversal K\"{a}hler form becomes $\tilde{\omega}^T =\omega^T +\frac{1}{2}dd^c \phi$.
\end{lem}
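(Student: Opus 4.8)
The plan is to reduce the entire structure to the single datum $\tilde\eta$, and then to decompose the difference $\beta := \tilde\eta-\eta$ using transverse Hodge theory. First I would note that for $(\tilde g,\tilde\eta,\xi,\tilde\Phi)\in\mathcal{S}(\xi,\ol{J})$ everything is determined by $\tilde\eta$: requiring that $\tilde\Phi$ annihilate $\xi$, preserve $\tilde D=\ker\tilde\eta$, satisfy $\tilde\Phi^2=-\mathbb{1}+\tilde\eta\otimes\xi$, and descend to the \emph{fixed} $\ol{J}$ on $\nu(\mathscr{F}_\xi)$ forces $\tilde\Phi=\Phi-\xi\otimes\tilde\eta\circ\Phi$ (one checks directly, using $\Phi^2=-\mathbb{1}+\eta\otimes\xi$ and $\tilde\eta(\xi)=1$, that this tensor has all four properties, and that any two such tensors differ by a map into $L_\xi$ killed by the constraints). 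The metric $\tilde g$ is then read off from \eqref{eq:metric}. Hence it suffices to describe the admissible $\tilde\eta$.

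Next, since $\tilde\eta(\xi)=\eta(\xi)=1$ and $\mathcal{L}_\xi\tilde\eta=\mathcal{L}_\xi\eta=0$, the form $\beta=\tilde\eta-\eta$ is a real \emph{basic} $1$-form; and because both $\tfrac12 d\eta$ and $\tfrac12 d\tilde\eta$ must be transverse K\"ahler forms for the same $\ol{J}$, the basic $2$-form $d\beta$ is of type $(1,1)$. Writing $\beta=\beta^{1,0}+\overline{\beta^{1,0}}$, the vanishing of $(d\beta)^{0,2}=\ol\partial_b\beta^{0,1}$ says $\beta^{0,1}$ is $\ol\partial_b$-closed. I would then invoke the transverse Hodge theory for Riemannian foliations (El Kacimi-Alaoui) for the elliptic complex $\ol\partial_b$ on the transverse K\"ahler structure to write $\beta^{0,1}=\mathcal{H}^{0,1}+\ol\partial_b g+\ol\partial_b^{*}b$; pairing $\ol\partial_b\ol\partial_b^{*}b=\ol\partial_b\beta^{0,1}=0$ with $b$ forces $\ol\partial_b^{*}b=0$, so the coexact term drops and $\beta^{0,1}=\mathcal{H}^{0,1}+\ol\partial_b g$ with $g$ a basic complex function. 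Conjugating, adding, and splitting $g=u+\sqrt{-1}v$ into real basic functions gives $\partial_b\bar g+\ol\partial_b g=du+\sqrt{-1}(\ol\partial_b-\partial_b)v=du+d^c v$, whence $\beta=\alpha+du+d^c v$ with $\alpha=\mathcal{H}^{0,1}+\overline{\mathcal{H}^{0,1}}$ a real basic harmonic $1$-form. Setting $\psi=u$, $\phi=v$ yields \eqref{eq:trans-def}; adding constants to $u,v$ leaves $du,d^c v$ unchanged, so $\phi,\psi\in C^\infty_b(M)/\R$.

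Two points need care. One must identify the basic harmonic $1$-forms produced above with genuine $g$-harmonic $1$-forms representing $H^1(M,\R)$: this uses that the leaves of $\mathscr{F}_\xi$ are geodesic, so the foliation is taut and the basic and Riemannian Laplacians agree on basic $1$-forms, together with the standard fact that every $g$-harmonic $1$-form on a compact Sasaki manifold is basic (annihilated by $\iota_\xi$ and $\mathcal{L}_\xi$); the Gysin sequence of $\mathscr{F}_\xi$, in which multiplication by $[d\eta]_b$ is injective, then gives $H^1_b\cong H^1(M,\R)$. For the converse, given $(\phi,\psi,\alpha)$ one sets $\tilde\eta=\eta+d^c\phi+d\psi+\alpha$, checks $\tilde\eta(\xi)=1$ and $\mathcal{L}_\xi\tilde\eta=0$ (all added terms are basic), notes $d\tilde\eta=d\eta+dd^c\phi$ is automatically of type $(1,1)$, and defines $\tilde\Phi,\tilde g$ by \eqref{eq:trans-def}; this lies in $\mathcal{S}(\xi,\ol{J})$ provided the transverse form $\omega^T+\tfrac12 dd^c\phi$ stays positive, an open condition on $\phi$. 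Finally $\tilde\omega^T=\tfrac12 d\tilde\eta=\omega^T+\tfrac12 dd^c\phi$, since $d(d\psi)=0$ and $d\alpha=0$.

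The hard part will be the decomposition step: marshalling the transverse $\ol\partial_b$-Hodge theory on the (generally non-Hausdorff) leaf space, combining it with the $(1,1)$-type constraint to isolate exactly the three summands $d^c\phi$, $d\psi$, and $\alpha$, and then matching the basic-harmonic piece with honest $g$-harmonic $1$-forms. Once that is in place the remaining assertions are bookkeeping with the defining identities \eqref{eq:comp-tens} and \eqref{eq:metric}.
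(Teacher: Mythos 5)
Your proposal is correct and follows essentially the same route as the paper's (sketched) proof: both reduce the lemma to a transverse Hodge decomposition of the real basic $1$-form $\tilde{\eta}-\eta$, using the type-$(1,1)$ condition on its differential, and then identify basic harmonic $1$-forms with $g$-harmonic ones representing $H^1(M,\R)$. The only differences are presentational: you derive the refined decomposition $d^c\phi+d\psi+\alpha$ from the $\ol{\partial}_b$-Hodge decomposition of $\beta^{0,1}$ rather than quoting it directly, and you supply the routine verifications (uniqueness of $\tilde{\Phi}$ given $\tilde{\eta}$, the converse direction, positivity as an open condition) that the paper defers to its references.
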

\begin{proof}
We give only a sketch.  See~\cite{BoyGal08} for details.  The 1-form $\gamma=\tilde{\eta}-\eta$ is basic, and since
$d\gamma\in\Gamma(\Lambda_b^{1,1})$ and $\gamma$ is real, $d^c d\gamma =0$.  And we have the Hodge decomposition
\begin{equation}
\gamma =d^c \phi + d\psi +\alpha,
\end{equation}
with respect to the transversal K\"{a}hler metric $g^T$, where $\alpha\in\mathcal{H}^1_{g^T}$ is harmonic.
But note that $\mathcal{H}_{\R,g^T}^1 =\mathcal{H}_{\R,g}^1$, where the latter is the space of real harmonic 1-forms on $(M,g)$.
This is because a $\beta\in\Gamma(\Lambda^1(M))$ satisfying $d\beta =0$ and $\mathcal{L}_\xi \beta=0$ must be basic.
\end{proof}
\begin{rmk}\label{rmk:gauge-trans}
It is easy to check that the parameter $\psi$ in (\ref{eq:trans-def}) changes the structure only by a gauge transformation along
the leaves.  That is, if $\psi\in C^\infty_b(M)$, then $\exp(\psi\xi)^*\eta =\eta +d\psi$.
\end{rmk}

\subsection{transversely extremal metrics}

Given a basic $\phi\in C^\infty_b(M,\C)$, we define $\partial_g^{\#} \phi$ to be the $(1,0)$ component of the gradient, that is
\begin{equation}
g(\partial_g^{\#} \phi,\cdot) =\ol{\partial}\phi.
\end{equation}
In order for $\partial_g^{\#} \phi$ to be transversely holomorphic we need in addition $\ol{\partial}\partial_g^{\#} \phi =0$.
This is equivalent to the fourth-order transversally elliptic equation
\begin{equation}
L_g^b \phi := (\ol{\partial}\partial_g^{\#})^*\ol{\partial}\partial_g^{\#}\phi.
\end{equation}
As in the K\"{a}hler case we have
\begin{equation}\label{eq:hol-oper}
L_g^b \phi =\frac{1}{4}\bigl(\Delta_b^2 +(\rho^T,dd^c \phi) +2(\partial s^T)\contr\partial_g^{\#}\phi\bigr).
\end{equation}
We define the space of \emph{holomorphy potentials} to be  $\mathcal{H}^b_g :=\ker L_g^b$.

We denote by $\mathfrak{M}(\xi,\ol{J})$ the metrics associated with Sasaki structures in $\mathcal{S}(\xi,\ol{J})$.  We
define the Calabi functional just as in (\ref{eq:Calabi-funct}) by
\begin{equation}\label{eq:Calabi-Sasak}
\begin{array}{rcl}
\mathfrak{M}(\xi,\ol{J}) & \overset{\mathcal{C}}{\longrightarrow} & \R \\
g & \mapsto & \int_M s_g^2 \, d\mu_g
\end{array}
\end{equation}
We seek critical points of $\mathcal{C}$.  Because $\mathcal{C}$ only depends on the deformation of the transversal K\"{a}hler metric
$\tilde{\omega}^T =\omega +\frac{1}{2}dd^c \phi$ and not the other parameters in Lemma~\ref{lem:trans-def} and
Proposition~\ref{prop:Sasaki-Ric}.~\ref{eq:submer-scal} these critical
points are the transversely extremal metrics.  The Euler-Lagrange equation for $\mathcal{C}$ was worked out in~\cite{BoyGalSim08}.
\begin{prop}[\cite{BoyGalSim08}]
The fist derivative of $\mathcal{C}$ at $g$ along the path $\omega^T_t =\omega^T +t\frac{1}{2}dd^c \phi$ is
\[ \frac{d}{dt}\mathcal{C}(g_t)|_{t=0} = -4\int_M s_g \, (L_g^b \phi)\, d\mu_g.\]
\end{prop}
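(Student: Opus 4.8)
The plan is to reduce the statement to a computation in transverse (basic) K\"ahler geometry on the Riemannian foliation $\mathscr{F}_\xi$ and then run Calabi's variational argument there. Along the path one has $\eta_t=\eta+t\,d^c\phi$ and $\omega^T_t=\omega^T+t\,\tfrac12 dd^c\phi$, so by Proposition~\ref{prop:Sasaki-Ric}.\ref{eq:submer-scal} the scalar curvature is $s_{g_t}=s^T_t-2m$ with $s^T_t$ the transverse scalar curvature, and the Riemannian volume form is
\[ d\mu_{g_t}=\eta_t\wedge\tfrac{1}{m!}(\omega^T_t)^m=\eta\wedge\tfrac{1}{m!}(\omega^T_t)^m, \]
the term $t\,d^c\phi\wedge(\omega^T_t)^m$ vanishing because $d^c\phi$ is basic and $(\omega^T_t)^m$ already has top basic degree. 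Hence the entire deformation is transverse K\"ahler, every quantity below is basic, all pointwise identities are the usual K\"ahler ones read off on the charts $\pi_\alpha\colon U_\alpha\to W_\alpha$, and integration by parts over $M$ is justified by the transverse divergence theorem (equivalently, integrate against $\eta\wedge\tfrac1{m!}(\omega^T)^m$ and combine $\mathcal{L}_\xi$-invariance with Stokes' theorem).

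Next I would record the first-variation data at $t=0$. With $\dot\omega^T=\tfrac12 dd^c\phi$, the standard formulae give $\frac{d}{dt}d\mu_{g_t}|_{t=0}=(\tr_{\omega^T}\dot\omega^T)\,d\mu_g$ for the volume, $\dot\rho^T=-\tfrac12 dd^c(\tr_{\omega^T}\dot\omega^T)$ for the transverse Ricci form, and therefore a linearization of the form $\dot s^T=-\tfrac12\Delta_b^2\phi-(\rho^T,dd^c\phi)$ (in the normalization $\tr_{\omega^T}\tfrac12 dd^c\phi=-\tfrac12\Delta_b\phi$). Since $s_{g_t}=s^T_t-2m$ gives $\dot s_g=\dot s^T$, differentiating $\mathcal{C}(g_t)=\int_M s_{g_t}^2\,d\mu_{g_t}$ yields
\[ \frac{d}{dt}\mathcal{C}(g_t)|_{t=0}=\int_M\bigl(2\,s_g\,\dot s^T+s_g^2\,\tr_{\omega^T}\dot\omega^T\bigr)\,d\mu_g. \]

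The core step is to integrate by parts and recombine this into $-4\int_M s_g\,(L_g^b\phi)\,d\mu_g$, with $L_g^b$ as in (\ref{eq:hol-oper}). Transferring the operator $\Delta_b^2$ from $\phi$ onto $s_g$ and repeatedly invoking the contracted Bianchi identity (which expresses the divergence of $\rho^T$ through $\partial s^T$) produces exactly the first-order term $2(\partial s^T)\contr\partial_g^{\#}\phi$ of (\ref{eq:hol-oper}), and the three contributions collapse to the claimed expression. A clean way to organize this avoids tracking that term directly: expand $s_g^2=(s^T)^2-4m\,s^T+4m^2$; the variation of $\int_M 4m^2\,d\mu_g$ vanishes since $[\omega^T]_b$ is fixed, the variation of $\int_M s^T\,d\mu_g$ vanishes since this integral is the basic-cohomological pairing of $[\rho^T]_b$ with $[\omega^T]_b^{\,m-1}$ (both fixed along a transverse deformation), and $\frac{d}{dt}\int_M (s^T)^2\,d\mu_g$ is the transverse form of Calabi's identity~\cite{Cal85}, equal to $-4\int_M s^T L_g^b\phi\,d\mu_g$. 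Finally $\int_M s^T L_g^b\phi\,d\mu_g=\int_M s_g L_g^b\phi\,d\mu_g$, since the two differ by $2m\int_M L_g^b\phi\,d\mu_g$ and $\int_M L_g^b\phi\,d\mu_g=\langle\phi,L_g^b 1\rangle=0$ as constants lie in $\ker L_g^b$.

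The main obstacle is precisely this integration-by-parts bookkeeping: one must handle the basic Bochner/Bianchi identities carefully so that the first-order term in (\ref{eq:hol-oper}) emerges with the correct coefficient, and one must ensure Stokes' theorem is applicable over the possibly non-Hausdorff leaf space, which is why all integrals are taken on $M$ against $\eta\wedge\tfrac1{m!}(\omega^T)^m$. As a consistency check, the formal self-adjointness of $L_g^b$ rewrites the answer as $-4\int_M(L_g^b s_g)\,\phi\,d\mu_g$, so the critical points of $\mathcal{C}$ are exactly the Sasaki metrics with $L_g^b s_g=0$, i.e.\ those for which $\partial_g^{\#}s_g$ is transversely holomorphic --- the transversely extremal condition anticipated above.
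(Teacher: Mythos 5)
The paper offers no proof of this proposition: it is quoted directly from \cite{BoyGalSim08}, so there is no internal argument to compare yours against, and your proposal must stand on its own. It does. The reduction to transverse K\"ahler geometry is correctly justified: $d\mu_{g_t}=\eta\wedge\tfrac{1}{m!}(\omega^T_t)^m$ because $d^c\phi\wedge(\omega^T_t)^m$ is a basic $(2m+1)$-form and hence vanishes identically; $s_{g_t}=s^T_t-2m$ is Proposition~\ref{prop:Sasaki-Ric}; and the variation formulas for $d\mu_g$, $\rho^T$ and $s^T$ are the chart-wise K\"ahler ones applied to basic data, with Stokes' theorem on $M$ (equivalently the transverse divergence theorem, available because the Reeb foliation has geodesic leaves) covering all integrations by parts. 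Your decomposition $s_g^2=(s^T)^2-4m\,s^T+4m^2$ is an efficient way to isolate what is genuinely new relative to Calabi's computation: the constant and linear terms are pairings of the fixed basic classes $[\omega^T]_b$ and $[\rho^T]_b=2\pi c_1^b(\mathscr{F}_\xi)$ (the paper itself records $\int_M s^T_g\,d\mu_g$ as exactly such a pairing in its discussion of the Futaki invariant), so their $t$-derivatives vanish, while the quadratic term is the transverse form of Calabi's first-variation identity from \cite{Cal85}; the final replacement of $s^T$ by $s_g$ costs $2m\int_M L^b_g\phi\,d\mu_g=0$, which is correct since $L^b_g$ is self-adjoint and annihilates constants. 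Your fallback direct route also closes: integrating the Ricci term by parts and invoking the contracted Bianchi identity (which expresses the divergence of $\rho^T$ through $ds^T$) regenerates precisely the first-order term of (\ref{eq:hol-oper}), after which self-adjointness of $L^b_g$ puts the answer in the stated form, and also yields your concluding consistency check that critical points satisfy $L^b_g s_g=0$. The only point demanding care is the normalization of the pointwise inner product on basic $(1,1)$-forms, which must be taken consistently with (\ref{eq:hol-oper}) for the coefficients to match; that is bookkeeping, not a gap.
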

\begin{defn}
A Sasaki metric $g\in\mathfrak{M}(\xi,\ol{J})$ is \emph{extremal} if it is a critical point of (\ref{eq:Calabi-Sasak}).
Equivalently, the basic vector field $\partial_g^{\#} s_g$ is transversely holomorphic.
\end{defn}

\subsection{automorphism groups}

We consider the relevant automorphism groups and Lie algebras associated to a Sasaki structure $(g,\eta,\xi,\Phi)$.

Consider first the strictly pseudo convex CR structure $(D,J)$.  We denote the group of CR automorphisms by $\CR(D,J)$ and its
Lie algebra by $\Cr(D,J)$.  A fundamental result of~\cite{Sch95} classifies strictly pseudoconvex CR manifolds
for which $\CR(D,J)$ acts nonproperly.  We only need the result for compact $M$.
\begin{thm}[\cite{Sch95}]
Let $(M,D,J)$ be a compact strictly pseudoconvex CR manifold.  If $\CR(D,J)$ is not compact, then $(M,D,J)$ is CR diffeomorphic
to $\mathbb{S}^{2m+1}$ with the standard CR structure, in which case $\CR(D,J)=\PSU(m+1,1)$.
\end{thm}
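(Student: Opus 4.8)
The plan is to follow Schoen's adaptation of the Lelong-Ferrand--Obata method from the conformal to the CR setting, the essential mechanism being that non-properness of the symmetry group forces a concentration phenomenon that can occur only on the standard model. First I would fix a compatible pseudo-Hermitian structure, i.e. a contact form $\theta_0$ with $D=\ker\theta_0$ and $J$ its associated almost complex structure, and record the associated Webster connection and Webster scalar curvature $R_{\theta_0}$. Since every $\phi\in\CR(D,J)$ preserves $(D,J)$, the pullback $\phi^*\theta_0=u_\phi^{2/m}\theta_0$ for a positive function $u_\phi$, and the associated CR Yamabe equation, with critical Folland--Stein exponent $p=2+\tfrac{2}{m}$, relates $R_{\phi^*\theta_0}$ to $R_{\theta_0}$. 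If $\CR(D,J)$ is non-compact, choose a sequence $\phi_i$ with no convergent subsequence; the first step is to show that non-properness is equivalent to the failure of the conformal factors $u_i=u_{\phi_i}$ to remain bounded in $C^0$, so that after passing to a subsequence the pseudo-Hermitian volume measures $u_i^{\,p}\,\theta_0\wedge(d\theta_0)^m$ concentrate at a single point $p\in M$.

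Next I would bring in the CR Yamabe invariant $Y(M,D,J)$, the infimum of the CR Yamabe (Webster energy) functional over the conformal class of $\theta_0$, together with the fundamental facts that $Y(M,D,J)\le Y(\mathbb{S}^{2m+1})$ and that all the contact forms $\phi_i^*\theta_0$ share a single value of this functional because they are CR-equivalent to $\theta_0$. Around the concentration point $p$ I would perform a blow-up: rescale the pseudo-Hermitian structures by the parabolic Heisenberg dilations adapted to the concentration rate and extract a limit defined on the Heisenberg group, which is the local model for every strictly pseudoconvex CR manifold. A Liouville-type theorem of Jerison--Lee, classifying the positive entire solutions of the critical CR Yamabe equation on the Heisenberg group as the standard bubbles, identifies this blow-up limit with the standard CR structure on $\mathbb{S}^{2m+1}$ punctured at one point, and forces the equality $Y(M,D,J)=Y(\mathbb{S}^{2m+1})$.

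To globalize the local equivalence near $p$ into a CR diffeomorphism of all of $M$, I would invoke the rigidity built into the CR Yamabe problem through a positive-mass theorem: constructing the Green's function of the Webster conformal Laplacian at $p$ and expanding it near $p$, the sub-leading ``mass'' coefficient is nonnegative and vanishes precisely when $(M,D,J)$ is CR-equivalent to the sphere; the concentration produced above forces this mass to vanish, whence $(M,D,J)$ is globally CR diffeomorphic to $\mathbb{S}^{2m+1}$ with its standard structure. Finally, on the standard sphere $\mathbb{S}^{2m+1}=\partial B^{m+1}$ the CR automorphisms are exactly the boundary traces of the biholomorphisms of the unit ball $B^{m+1}\subset\C^{m+1}$; since $\Aut(B^{m+1})\cong\PSU(m+1,1)$ this yields $\CR(D,J)=\PSU(m+1,1)$ and completes the argument.

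I expect the main obstacle to be the two deep analytic ingredients of the middle step: the blow-up analysis itself---controlling the concentration well enough to extract a clean Heisenberg limit and to prove the Liouville classification of bubbles---together with the CR positive-mass theorem that supplies the global rigidity. By comparison, translating non-properness into $C^0$-unboundedness of the conformal factors, and computing $\CR(D,J)$ of the standard sphere, are comparatively routine.
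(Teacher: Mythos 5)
You should know at the outset that the paper contains \emph{no proof} of this statement: it is imported wholesale from Schoen's paper \cite{Sch95}, so your proposal can only be compared against Schoen's original argument, not against anything internal to the paper. Your overall Lelong-Ferrand--Obata framework is indeed the right family of ideas, and two of your steps are sound: the translation of non-compactness of $\CR(D,J)$ (for compact $M$, equivalently non-properness) into $C^0$-unboundedness of the conformal factors $u_\phi$ with the resulting concentration at a point, and the final identification $\CR(\mathbb{S}^{2m+1})=\PSU(m+1,1)$ via boundary values of biholomorphisms of the ball.

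However, the two analytic pillars carrying the middle of your argument are genuine gaps, not merely ``main obstacles.'' First, the CR positive-mass theorem you invoke does not exist in the generality you need: such a theorem is known only in real dimension three ($m=1$), only under an embeddability hypothesis (Cheng--Malchiodi--Yang, proved some two decades after Schoen's paper), and it \emph{fails} for the non-embeddable Rossi spheres; no higher-dimensional version is available. Consequently the step ``concentration forces the mass at $p$ to vanish, hence $(M,D,J)$ is globally the standard sphere'' rests on a theorem that is unavailable for general $m$, and the implicit rigidity ``$Y(M,D,J)=Y(\mathbb{S}^{2m+1})$ implies CR-spherical'' is precisely the deep statement that cannot be cited as a tool. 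Second, the ``Liouville-type theorem of Jerison--Lee'' classifies the \emph{extremals} of the CR Sobolev inequality on the Heisenberg group, i.e. energy-minimizing solutions; the classification of \emph{all} positive entire solutions of the critical CR Yamabe equation (the CR analogue of Caffarelli--Gidas--Spruck) was open then and is still not available in the needed generality, and a blow-up limit of your kind is not a priori a minimizer, so invoking their result requires substantial additional work. Schoen's proof avoids both difficulties: roughly, he rescales the diverging automorphisms \emph{themselves} near the concentration point and obtains in the limit a CR diffeomorphism from the flat model (the Heisenberg group, i.e. the sphere punctured at a point) onto the complement of a point of $M$, so that $M$ is exhibited directly as the standard CR sphere; the rigidity input is the classical Cartan/Chern--Moser Liouville theorem for CR maps, valid in all dimensions, rather than any mass or Yamabe-invariant rigidity. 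As written, your middle and globalization steps would fail.
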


It is useful to have the following alternative characterization of Sasaki structures.
\begin{prop}\label{prop:CR-Sasaki}
Let $(M,D,J)$ be a strictly pseudoconvex manifold.  If $\xi\in\Cr(D,J)$ is everywhere transversal to $D$, then, after possibly changing sign to $-\xi$, there is a unique Sasaki structure $(g,\eta,\xi,\Phi)$ with Reeb vector field $\xi$.
\end{prop}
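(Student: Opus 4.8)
The plan is to construct the Sasaki structure explicitly from the data $(D,J,\xi)$ and then verify that the associated metric cone is Kähler, which is the definition of Sasaki adopted here; uniqueness will then be immediate, since every ingredient is forced by $(D,J,\xi)$. First I would define the candidate tensors. Let $\eta$ be the unique $1$-form with $\ker\eta = D$ and $\eta(\xi)=1$, let $\Phi\in\End(TM)$ be determined by $\Phi|_D = J$ and $\Phi(\xi)=0$, and let $g$ be given by the formula (\ref{eq:metric}). The first point to check is that $\xi$ is the Reeb field of $\eta$, i.e. $\xi\contr d\eta = 0$. Since $\xi\in\Cr(D,J)$ its flow preserves $D=\ker\eta$, so $\mathcal{L}_\xi\eta = h\eta$ for some function $h$; but $\mathcal{L}_\xi\eta = \xi\contr d\eta$ because $\eta(\xi)=1$, and evaluating the identity $\xi\contr d\eta = h\eta$ on $\xi$ forces $h=0$. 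Thus $\xi\contr d\eta = 0$.

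Next I would address positivity and compatibility. Restricted to $D$ the formula (\ref{eq:metric}) gives $g(X,Y)=\frac{1}{2}d\eta(X,JY)$, which is the Levi form of the CR structure up to a constant; strict pseudoconvexity makes it definite, and replacing $\xi$ by $-\xi$ (hence $\eta$ by $-\eta$ and $d\eta$ by $-d\eta$, while $J$ is unchanged) flips its sign, so after this normalization it is positive. Since $\xi\contr d\eta=0$ one checks $g(\xi,\xi)=1$ and $g(\xi,X)=0$ for $X\in D$, so $g$ is positive definite. Integrability of $(D,J)$ shows $d\eta|_D$ is of type $(1,1)$ (because $d\eta(Z,W)=-\eta([Z,W])=0$ for $Z,W$ in the $+\sqrt{-1}$-eigenbundle $D^{1,0}\subseteq\ker\eta$), from which $g(JX,JY)=g(X,Y)$ on $D$ follows, and $(g,\eta,\xi,\Phi)$ is a contact metric structure satisfying (\ref{eq:comp-tens}) and (\ref{eq:metric}).

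The heart of the argument is to show the cone $(C(M),\ol{g}=dr^2+r^2 g)$ is Kähler for the almost complex structure $I$ defined by $I(r\partial_r)=\xi$, $I\xi=-r\partial_r$, and $I|_D=J$. For integrability I would apply Newlander--Nirenberg to $T^{1,0}C(M)=D^{1,0}\oplus\C(\xi+\sqrt{-1}\,r\partial_r)$. There are three bracket types: $[D^{1,0},D^{1,0}]\subseteq D^{1,0}$ is exactly the CR integrability of $(D,J)$; for $Z\in D^{1,0}$ lifted from $M$ one has $[r\partial_r,Z]=0$ and $[\xi,Z]\in D^{1,0}$, the latter being precisely the infinitesimal CR-automorphism condition on $\xi$, so $[D^{1,0},\xi+\sqrt{-1}\,r\partial_r]=-[\xi,D^{1,0}]\subseteq D^{1,0}$; the remaining bracket is trivial. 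Hence $T^{1,0}C(M)$ is involutive and $I$ is integrable, and the definitions together with $g(JX,JY)=g(X,Y)$ on $D$ show $\ol{g}$ is $I$-Hermitian. For the Kähler condition it remains to show the fundamental form $\omega=\ol{g}(I\cdot,\cdot)$ is closed; a direct computation from the definitions yields $\omega = r\,dr\wedge\eta + \frac{1}{2}r^2\,d\eta$ as in (\ref{eq:Kaehler-pot2}), and writing $r\,dr\wedge\eta=d(\frac{1}{2}r^2)\wedge\eta$ gives $d\omega = -r\,dr\wedge d\eta + r\,dr\wedge d\eta = 0$.

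The main obstacle is this integrability step, specifically converting the hypothesis $\xi\in\Cr(D,J)$ into the bracket identity $[\xi,\Gamma(D^{1,0})]\subseteq\Gamma(D^{1,0})$: here both clauses of being an infinitesimal CR automorphism (preserving $D$ and commuting with $J$ on $D$) are used, and this is the only place the full automorphism hypothesis, as opposed to mere transversality, enters. Once the cone is Kähler, $(g,\eta,\xi,\Phi)$ is Sasaki with Reeb field $\xi$ by definition. Finally, uniqueness is clear: in any Sasaki structure with Reeb vector field $\xi$ the contact form must be the $\eta$ above (determined by $\ker\eta=D$ and $\eta(\xi)=1$), the tensor $\Phi$ must restrict to $J$ and kill $\xi$, and $g$ is recovered from (\ref{eq:metric}), so the structure constructed is the only possibility.
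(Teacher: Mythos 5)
Your proof is correct, and while it starts the same way as the paper's, the key verification is done by a genuinely different route. Both arguments begin identically: $\eta$ is the unique annihilator of $D$ with $\eta(\xi)=1$, the sign is fixed using strict pseudoconvexity, $\mathcal{L}_\xi\eta=0$ makes $\xi$ the Reeb field of $\eta$, and $\Phi$ and $g$ are forced by $\Phi|_D=J$, $\Phi(\xi)=0$ and (\ref{eq:metric}); uniqueness is the same observation in both. The difference is in certifying that the resulting structure is Sasaki. The paper stays on $M$: it notes $\mathcal{L}_\xi\Phi=0$ and invokes the standard normality criterion for contact metric structures (CR integrability of $(D,J)$ together with $\mathcal{L}_\xi\Phi=0$ implies Sasaki), deferring all details to \cite{BoyGal08}. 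You instead pass to the cone and check the paper's actual definition of Sasaki directly: you build $I$ with $I(r\partial_r)=\xi$, $I|_D=J$, prove integrability via Newlander--Nirenberg --- correctly isolating that the hypothesis $\xi\in\Cr(D,J)$ enters precisely as $[\xi,\Gamma(D^{1,0})]\subseteq\Gamma(D^{1,0})$ --- and observe that $\omega=\frac{1}{2}d(r^2\eta)$ is exact, hence closed. Your route is longer but self-contained and matches the cone definition adopted in the paper, whereas the paper's route is shorter but outsources the real content to the contact-geometry literature; your intermediate steps (the argument that $\mathcal{L}_\xi\eta=h\eta$ forces $h=0$, and the type-$(1,1)$ property of $d\eta|_D$ giving $J$-invariance and symmetry of $g$) also fill in details the paper merely asserts.
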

\begin{proof}
Let $\eta$ be the unique 1-form with $\ker\eta =D$ and $\eta(\xi)=1$.  After possibly changing signs on $\xi$ and $\eta$ we have
$d\eta|_D >0$.  Since $\xi$ preserves the distribution $D$,
$\mathcal{L}_{\xi}\eta =0$ and $\xi$ is the Reeb vector field of $\eta$.  Then one can define $\Phi$ by $\Phi|_D =J$ and
$\Phi(\xi)=0$, and one has $\mathcal{L}_{\xi}\Phi =0$.  This latter condition and the integrability of $(D,J)$ implies that
$(g,\eta,\xi,\Phi)$ with $g$ defined in (\ref{eq:metric}) is Sasaki.  See~\cite{BoyGal08} for details.
\end{proof}

We have the subgroup of the diffeomorphism group preserving the foliation $\mathscr{F}_\xi$
\[ \Fol(M,\mathscr{F}_\xi) =\{\phi\in\Diff(M) : \phi_* \mathscr{F}_\xi \subset\mathscr{F}_\xi \},\]
with Lie algebra
\[\fol(M,\mathscr{F}_\xi)=\{X\in\CX(M) : [X,\xi]\subset\Gamma(L_\xi)\}. \]
Note that $\Fol(M,\mathscr{F}_\xi)$ is infinite dimensional as every $X\in\Gamma(L_\xi)$ is in $\fol(M,\mathscr{F}_\xi)$.
Any $\phi\in\Fol(M,\mathscr{F}_\xi)$ induces a map of bundles $\phi_* :\nu(\mathscr{F}_\xi)\rightarrow\nu(\mathscr{F}_\xi)$.
The subgroup of transversely holomorphic automorphism of $\mathscr{F}_\xi$ can be characterized as those which induce an automorphism
of the complex bundle $(\nu(\mathscr{F}_\xi), \ol{J})$
\[\Fol(M,\mathscr{F}_\xi,\ol{J}):=\{\phi\in\Fol(M,\mathscr{F}_\xi) : \phi_* \circ\ol{J} =\ol{J}\circ\phi_* \}.\]
Note that this group is also infinite dimensional, since any section in $L_\xi$ has a 1-parameter group in
$\Fol(M,\mathscr{F}_\xi,\ol{J})$.
We will denote the projection of $X\in\CX(M)$ to a section of $\nu(\mathscr{F}_\xi)$ by 
$\ol{X}\in\Gamma(\nu(\mathscr{F}_\xi))$.

\subsubsection{Transversely holomorphic vector fields}
The Lie algebra $\fol(M,\mathscr{F}_\xi,\ol{J})$ of $\Fol(M,\mathscr{F}_\xi,\ol{J})$ will be called the space of
\emph{transversely holomorphic} vector fields.
A transversely holomorphic vector field can be characterized more succinctly.
\begin{prop}
Let $(g,\eta,\xi,\Phi)$ be any Sasaki structure with Reeb vector field $\xi$ and transversely holomorphic structure $\ol{J}$.
Thus $\Phi$ satisfies (\ref{eq:trans-CD}).  Then $X\in \fol(M,\mathscr{F}_\xi,\ol{J})$ if and only if
\begin{equation}\label{eq:trans-hol}
\overline{[X,\Phi Y]} =\ol{J}\overline{[X,Y]},
\end{equation}
for all $Y\in\CX(M)$.
\end{prop}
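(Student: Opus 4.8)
The plan is to read the group-level condition $\phi_*\circ\ol{J}=\ol{J}\circ\phi_*$ defining $\Fol(M,\mathscr{F}_\xi,\ol{J})$ at the infinitesimal level and then translate it into the bracket identity (\ref{eq:trans-hol}) by means of the commuting diagram (\ref{eq:trans-CD}). The central object is the induced action of a foliate vector field on the normal bundle. If $X\in\fol(M,\mathscr{F}_\xi)$, then for $\ol{Y}\in\Gamma(\nu(\mathscr{F}_\xi))$ with any lift $Y\in\CX(M)$ I set $\mathcal{L}_X\ol{Y}:=\overline{[X,Y]}$, and the first thing to check is that this is independent of the lift. Two lifts differ by a section $Z\in\Gamma(L_\xi)$, and writing $Z=f\xi$ one has $[X,f\xi]=X(f)\xi+f[X,\xi]$, which lies in $\Gamma(L_\xi)$ precisely because $X\in\fol(M,\mathscr{F}_\xi)$ means $[X,\xi]\in\Gamma(L_\xi)$. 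Thus $\mathcal{L}_X$ is a well-defined operator on $\Gamma(\nu(\mathscr{F}_\xi))$, and it is exactly the derivative at $t=0$ of the bundle maps $\phi_{t*}$ induced by the flow $\phi_t$ of $X$.

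For the forward direction, suppose $X\in\fol(M,\mathscr{F}_\xi,\ol{J})$; in particular $X\in\fol(M,\mathscr{F}_\xi)$, so $\mathcal{L}_X$ is defined. Differentiating $\phi_{t*}\circ\ol{J}=\ol{J}\circ\phi_{t*}$ at $t=0$ gives $\mathcal{L}_X(\ol{J}\ol{Y})=\ol{J}(\mathcal{L}_X\ol{Y})$ for every $\ol{Y}$, i.e. $\mathcal{L}_X\ol{J}=0$. Now invoke (\ref{eq:trans-CD}), which says $\overline{\Phi Y}=\ol{J}\ol{Y}$: using $\Phi Y$ as a lift of $\ol{J}\ol{Y}$ yields $\mathcal{L}_X(\ol{J}\ol{Y})=\overline{[X,\Phi Y]}$, while the right-hand side is $\ol{J}\,\overline{[X,Y]}$, and equating these is exactly (\ref{eq:trans-hol}). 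For the reverse direction, assume (\ref{eq:trans-hol}) for all $Y$. Here I must first recover that $X$ is foliate, since that hypothesis is not assumed a priori: taking $Y=\xi$ and using $\Phi\xi=0$, the left-hand side vanishes, so $\ol{J}\,\overline{[X,\xi]}=0$, and as $\ol{J}$ is a fibrewise isomorphism this forces $\overline{[X,\xi]}=0$, whence $[X,\xi]\in\Gamma(L_\xi)$ and $X\in\fol(M,\mathscr{F}_\xi)$. With $\mathcal{L}_X$ now defined, reading (\ref{eq:trans-hol}) through (\ref{eq:trans-CD}) exactly as above gives $\mathcal{L}_X(\ol{J}\ol{Y})=\ol{J}(\mathcal{L}_X\ol{Y})$ for all $\ol{Y}$, i.e. $\mathcal{L}_X\ol{J}=0$; integrating this along the complete flow of $X$ on compact $M$ shows $\phi_{t*}$ commutes with $\ol{J}$, so $X\in\fol(M,\mathscr{F}_\xi,\ol{J})$.

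The main obstacle is bookkeeping rather than analysis: establishing that $\mathcal{L}_X$ descends to a well-defined operator on $\nu(\mathscr{F}_\xi)$, and matching the group condition with its infinitesimal counterpart $\mathcal{L}_X\ol{J}=0$ via differentiation and re-integration of the flow. Once these are in place the diagram (\ref{eq:trans-CD}) does the real work of converting $\ol{J}$ acting on the normal bundle into $\Phi$ acting on $TM$, and the only genuinely new ingredient is the $Y=\xi$ substitution, which is what lets the bracket identity (\ref{eq:trans-hol}) recover the foliate condition in the \emph{if} direction without assuming it.
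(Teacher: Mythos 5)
Your proof is correct. The paper states this proposition without proof (it is treated as immediate from the definitions), so there is no argument of the paper's to compare against; your unwinding of the group condition $\phi_{t*}\circ\ol{J}=\ol{J}\circ\phi_{t*}$ through the flow of $X$, with the diagram (\ref{eq:trans-CD}) converting $\ol{J}$ into $\Phi$, is exactly the intended standard argument, and your $Y=\xi$ substitution recovering that $X$ is automatically foliate is precisely the paper's remark immediately following the proposition.
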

Note that (\ref{eq:trans-hol}) implies that $X\in\fol(M,\mathscr{F}_\xi)$; that is, it is automatically foliate.  Also, condition
(\ref{eq:trans-hol}) is equivalent to the $(1,0)$ vector field
\begin{equation}\label{eq:trans-holom}
\Xi=\frac{1}{2}(\ol{X} -\sqrt{-1}\ol{J}\ol{X}) \in\Gamma(\nu(\mathscr{F}_\xi))
\end{equation}
satisfying the transverse Cauchy-Riemann equations.

Since $\fol(M,\mathscr{F}_\xi,\ol{J})$ is infinite dimensional we define $\hol^T(\xi,\ol{J})$ to be the image of
\begin{equation}
\begin{array}{rcl}
\fol(M,\mathscr{F}_\xi,\ol{J}) & \overset{\pi}{\longrightarrow} & \Gamma(\nu(\mathscr{F}_\xi)) \\
X & \mapsto & \ol{X}
\end{array}
\end{equation}
which is a finite dimensional complex Lie algebra.  We will use $\hol^T(\xi,\ol{J})$ to denote both transversally
holomorphic $(1,0)$ vector fields as in (\ref{eq:trans-holom}), or transversally real holomorphic vector fields
depending on the context.

The subspace $\hol^T(\xi,\ol{J})_0 \subseteq\hol^T(\xi,\ol{J})$ of sections with a zero will turn out to be a Lie
subalgebra.  As remarked in the proof of Lemma~\ref{lem:trans-def}
\[H^{1,0}_{b}(M/\mathscr{F}_\xi) \oplus H^{0,1}_{b}(M/\mathscr{F}_\xi) =\mathcal{H}^1_{g^T} = \mathcal{H}^1_g,\]
where on the left we have the basic Dolbeault cohomology, and we see that $H_1(M,\Z)\subset (H^{1,0}_{b})^*$ is a lattice.
As in K\"{a}hler geometry, we have the \emph{Albanese} variety
\[\Alb(M,\xi,\ol{J})=(H_{b}^{1,0})^* /H_1(M,\Z) =H^0_b(M,\Omega^1_b)^*/H_1(M,\Z),\]
and associated map
\[ \mu: M\rightarrow\Alb(M,\xi,\ol{J}),\]
which is a transversely holomorphic map if one considers $\Alb(M,\xi,\ol{J})$ to have the trivial foliation with the points as leaves.
Explicitly, if $p_0 \in M$ is a fixed point and $\beta_1,\ldots\beta_k \in H^0_b(M,\Omega^1_b)$ are a basis then
\begin{equation}\label{eq:Alb-map}
 \mu(p) =\Bigl(\int_{p_0}^p \beta_1,\ldots, \int_{p_0}^p \beta_k \Bigr)
\end{equation}
for any path $\gamma$ from $p_0$ to $p$.

Arguing just as in~\cite[Thm. 1]{LebSim94} we see that the image of
\begin{equation}
\partial^{\#}_g :\mathcal{H}^b_g \rightarrow\hol^T(\xi,\ol{J})
\end{equation}
is precisely $\hol^T(\xi,\ol{J})_0$.  Furthermore, (\ref{eq:Alb-map}) induces a group homomorphism
$\mu:\Fol(M,\mathscr{F}_\xi,\ol{J})\rightarrow\Aut(Alb(M,\xi,\ol{J}))$, with $\hol^T(\xi,\ol{J})_0 \subseteq\hol^T(\xi,\ol{J})$ the
ideal given by the kernel.

Similar to the K\"{a}hler case we have the following.
\begin{lem}\label{lem:pot-kill}
If $X\in\hol^T(\xi,\ol{J})_0$, then $X=\partial^{\#}_g f$ for an imaginary function $f\in\sqrt{-1}C^\infty_b(M)$ if and only
if $\re X$ is Killing for $g^T$.  If this is so, then $V=\re X \in\Gamma(\nu(\mathscr{F}_\xi))$ lifts to a vector field
$\tilde{V}\in\aut(g,\eta,\xi,\Phi)$.  Conversely, if $\tilde{V}\in\aut(g,\eta,\xi,\Phi)$, then $V=\pi(\tilde{V}) =\re \partial^{\#}_g f$, for the imaginary valued function $f=\sqrt{-1}\eta(\tilde{V})$.
\end{lem}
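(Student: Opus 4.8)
The plan is to treat Lemma~\ref{lem:pot-kill} as the basic (transverse) analogue of the standard K\"{a}hler correspondence between holomorphy potentials and Hamiltonian Killing fields, and then to obtain the lift by adjusting the vertical component along $\xi$. Since $X\in\hol^T(\xi,\ol{J})_0$, the identification of the image of $\partial^{\#}_g$ with $\hol^T(\xi,\ol{J})_0$ recorded above gives $X=\partial^{\#}_g\phi$ for a basic $\phi=\phi_1+\sqrt{-1}\phi_2$, unique up to a complex constant (the kernel of $\partial^{\#}_g$ consists of the basic holomorphic, hence constant, functions). A direct computation from $g(\partial^{\#}_g u,\cdot)=\ol{\partial}u$ yields, for real basic $u$, the real-part formulas $\re\,\partial^{\#}_g u=\tfrac12\grad u$ and $\re\,\partial^{\#}_g(\sqrt{-1}u)=\tfrac12\Phi\grad u$, so that
\[ \re X=\tfrac12\grad\phi_1+\tfrac12\Phi\grad\phi_2. \]
As $\re X$ is automatically transversely holomorphic, it is Killing for $g^T$ precisely when $d\,\iota_{\re X}\omega^T=0$; using $\iota_{\re X}\omega^T=\tfrac12(d^c\phi_1-d\phi_2)$ this reduces to $dd^c\phi_1=0$. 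On the compact $M$ a basic function with $dd^c_b\phi_1=0$ is constant (pair against $(\omega^T)^{m-1}\wedge\eta$ to get $\Delta_b\phi_1=0$), so $\re X$ is Killing iff $\phi_1=\re\phi$ is constant, i.e.\ iff $X=\partial^{\#}_g f$ with $f$ imaginary. This is the first assertion.

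Next I would construct the lift. Writing $f=\sqrt{-1}h$ with $h$ real basic, the equivalence gives $\re X=\tfrac12\Phi\grad h$, and I set
\[ \tilde V:=\tfrac12\Phi\grad h+h\xi, \]
so that $\pi(\tilde V)=\re X$ and $\eta(\tilde V)=h$. The factor $\tfrac12$ is forced by the contact condition: since $\grad h\in\Gamma(D)$ one has $d\eta(\Phi\grad h,\cdot)=2g(\Phi^2\grad h,\cdot)=-2dh$, whence $\iota_{\tilde V}d\eta=-dh$ and $\mathcal{L}_{\tilde V}\eta=dh+\iota_{\tilde V}d\eta=0$. Because $h$ is basic, $\Phi$ is $\xi$-invariant, and $\xi$ is Killing, we get $\mathcal{L}_\xi\tilde V=0$, i.e.\ $[\tilde V,\xi]=0$; thus $\tilde V$ is foliate and preserves the splitting $TM=D\oplus L_\xi$. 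Finally $\tilde V$ projects to the transversely holomorphic $\re X$, so it preserves $\ol{J}$ on $\nu(\mathscr{F}_\xi)$, and together with $\mathcal{L}_{\tilde V}\eta=0$ and $[\tilde V,\xi]=0$ this gives $\mathcal{L}_{\tilde V}\Phi=0$; the metric formula (\ref{eq:metric}) then yields $\mathcal{L}_{\tilde V}g=0$, so $\tilde V\in\aut(g,\eta,\xi,\Phi)$.

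For the converse, suppose $\tilde V\in\aut(g,\eta,\xi,\Phi)$ and put $h=\eta(\tilde V)$, $f=\sqrt{-1}h$. Then $h$ is basic, since $\xi h=(\mathcal{L}_\xi\eta)(\tilde V)+\eta([\xi,\tilde V])=0$. Decomposing $\tilde V=\tilde V^D+h\xi$ with $\tilde V^D\in\Gamma(D)$, the identity $\mathcal{L}_{\tilde V}\eta=0$ reads $dh+\iota_{\tilde V^D}d\eta=0$, i.e.\ $2g(\Phi\tilde V^D,\cdot)=-dh$; applying $\Phi$ and using $\Phi^2=-\mathbb{1}$ on $D$ gives $\tilde V^D=\tfrac12\Phi\grad h$. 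Projecting to $\nu(\mathscr{F}_\xi)$ and invoking $\re\,\partial^{\#}_g(\sqrt{-1}h)=\tfrac12\Phi\grad h$ from the first step, I conclude $V=\pi(\tilde V)=\re\,\partial^{\#}_g f$ with $f=\sqrt{-1}\eta(\tilde V)$, as claimed.

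The main obstacle I anticipate is bookkeeping rather than conceptual: pinning down the normalizations and signs in the real-part formulas (which depend on the conventions for $d^c$, $\ol{\partial}$, and $\omega^T$) so that $\mathcal{L}_{\tilde V}\eta=0$ comes out exactly, and verifying that the compactness argument forcing $\phi_1$ to be constant goes through in the basic category --- one works with the transverse volume $(\omega^T)^m\wedge\eta$ and the basic Laplacian $\Delta_b$ rather than on a genuine compact K\"{a}hler manifold, since the leaf space of $\mathscr{F}_\xi$ need not be one.
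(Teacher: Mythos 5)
Your proposal is correct and takes essentially the same approach as the paper: both rest on the computation $\iota_{\re X}\,\omega^T=\tfrac12(d^c\phi_1-d\phi_2)$, so that the Killing condition becomes $dd^c\phi_1=0$ and hence $\phi_1$ constant by compactness, and both construct the lift by prescribing $\pi(\tilde{V})=\re X$ together with $\eta(\tilde{V})=-\sqrt{-1}f$. The only difference is thoroughness: you verify explicitly that $\tilde{V}$ preserves $\xi$, $\Phi$, and $g$ (not merely $\eta$) and you carry out the converse computation, both of which the paper's proof leaves implicit.
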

\begin{proof}
Suppose $V=\re \partial^{\#}_g f$ with $f$ imaginary valued.  Then $\frac{\sqrt{-1}}{2} \ol{J}^*df=V\contr g^T$, so
$\frac{\sqrt{-1}}{2}df =V\contr\omega^T$, which implies that $\mathcal{L}_V \omega^T =0$ and $V$ is Killing.

Suppose $V$ is Killing and $\partial^{\#}_g f =V-\sqrt{-1}\ol{J}V$, where $f=u+\sqrt{-1}v$.  Then
$\frac{1}{2}(du-\ol{J}^*dv)=V\contr g^T$,
which implies $\frac{1}{2}(-\ol{J}^*du -dv) =V\contr\omega^T$.  Since $\mathcal{L}_V \omega^T =0$, we have
$dd^c u=0$ and $u$ must be constant.

If $V=\re\partial^{\#}_g f$ with $f$ imaginary valued, choose $\tilde{V}\in\CX(M)$ with $\pi(\tilde{V})=V$ and
$\eta(\tilde{V})=-\sqrt{-1}f$.  Then
\[\mathcal{L}_{\tilde{V}}\eta =d(\eta(\tilde{V})) +\tilde{V}\contr d\eta =-\sqrt{-1}df +\tilde{V}\contr d\eta =0.\]
\end{proof}

\subsubsection{Real holomorphy potentials}
It will be useful to have a description of real holomorphic transversal vector fields and potentials.
Given a real $X\in\hol^T(\xi,\ol{J})$, let $\beta =X^{\flat}$.  If $\nabla^{-} \beta$ denotes the $\ol{J}$-anti-invariant component of
$\nabla^T \beta$.  Then a basic $X\in\Gamma(\nu(\mathscr{F}_\xi))$ is transversally holomorphic if and only if
$\nabla^{-} X=0$.  It follows that $dd^c \beta =0$, and we have the Hodge decomposition
\begin{equation}\label{eq:real-decom}
\beta =\beta_h +du_{X} +d^c v_{X},
\end{equation}
where $\beta_h$ is harmonic, $u_{X}$ and $v_{X}$ are real functions, and $d^c v_{X}$ is coclosed.
We have $X\in\hol^T(\xi,\ol{J})_0$, when $\beta_h =0$ in which case $f_{X} =u_{X} +\sqrt{-1}v_{X}$ is
the holomorphy potential of $X^{1,0}$, i.e. $X^{1,0} =\partial^{\#}_g f_{X}$.  Note that $X$ is Killing for $g^T$ if and only if
$u_{X}$ is constant.

We define the real operator $\mathbb{L}^b_g$ by
\[ \mathbb{L}^b_g f =(\nabla^- d)^*(\nabla^- d)f.\]
Then a real basic function $f$ satisfies $\mathbb{L}^b_g f =0$ if and only if $\grad f$ is real holomorphic.  And every
$X\in\hol^T(\xi,\ol{J})_0$ Killing with respect to $g^T$ can be written $X=\ol{J}\grad f$ for such an $f$.
As shown in~\cite{RolSimTip11} we have
\begin{equation}
\mathbb{L}^b_g f =\frac{1}{2}\Delta_b^2 f +\frac{1}{2}(\rho^T, dd^c f)+ \frac{1}{2}(df,ds_g),
\end{equation}
and comparing with (\ref{eq:hol-oper}) is related to $L_g^b$ by
\begin{equation}
2L_g^b f =\mathbb{L}^b_g f +\frac{\sqrt{-1}}{2}\mathcal{L}_{\ol{J}\grad s_g} f.
\end{equation}
\begin{lem}[\cite{RolSimTip11}]
The space of real basic solutions of $\mathbb{L}^b_g$ coincides with the space of real basic solutions of $L_g^b$.
\end{lem}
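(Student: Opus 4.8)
The plan is to prove that $\ker \mathbb{L}^b_g = \ker L^b_g$ on real basic functions by exploiting the relation
\[
2L_g^b f = \mathbb{L}^b_g f + \frac{\sqrt{-1}}{2}\mathcal{L}_{\ol{J}\grad s_g}\, f,
\]
established just above the statement. The key observation is that the second term on the right is a first-order transport operator along the real holomorphic Killing-type field $\ol{J}\grad s_g$, and on a Sasaki-extremal metric this field is itself transversely holomorphic, so it plays well with the fourth-order operators in question. First I would verify the trivial inclusion: if $\mathbb{L}^b_g f = 0$ for a real basic $f$, then $\grad f$ is real holomorphic, hence $\mathcal{L}_{\ol{J}\grad s_g} f = (\ol{J}\grad s_g) f$ makes sense and I would show this Lie-derivative term also vanishes (or is absorbed), giving $L^b_g f = 0$ directly from the displayed identity.

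The more substantive direction is $\ker L^b_g \subseteq \ker \mathbb{L}^b_g$. Here I would argue as follows. Because $L^b_g = (\ol{\partial}\partial_g^{\#})^* \ol{\partial}\partial_g^{\#}$ is a nonnegative self-adjoint operator with respect to the $L^2$-pairing on basic functions (using $d\mu_g$), its kernel coincides with the kernel of $\ol{\partial}\partial_g^{\#}$; that is, $f \in \ker L^b_g$ iff $\partial_g^{\#} f$ is transversely holomorphic. Symmetrically, $\mathbb{L}^b_g = (\nabla^- d)^*(\nabla^- d)$ is nonnegative self-adjoint, so $f \in \ker\mathbb{L}^b_g$ iff $\nabla^- d f = 0$ iff $\grad f$ is real holomorphic. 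So the statement reduces to showing that a \emph{complex} basic $f$ has $\partial_g^{\#} f$ transversely holomorphic precisely when its real and imaginary parts generate real holomorphic gradient fields; this is where I would decompose $f = u + \sqrt{-1}\,v$ and use the characterization in~(\ref{eq:real-decom}) together with Lemma~\ref{lem:pot-kill}, relating the holomorphy potential of a $(1,0)$ field to the Hodge pieces $du$, $d^c v$. The point is that $X \in \hol^T(\xi,\ol{J})_0$ with $X^{1,0} = \partial_g^{\#} f$ forces both $\re X = \grad u$ (up to the $\ol{J}$-adjustment) and $\im X$ to be transversely real holomorphic, which is exactly $\nabla^- d$ applied to the real potentials vanishing.

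The main obstacle I anticipate is the cross-term $\frac{\sqrt{-1}}{2}\mathcal{L}_{\ol{J}\grad s_g} f$: a priori it could mix the kernels, since $L^b_g$ is the complex (non-self-adjoint in the naive sense) operator while $\mathbb{L}^b_g$ is its real, self-adjoint counterpart, and $L^b_g$ need not be self-adjoint unless $s_g$ is suitably compatible. The resolution, following the K\"{a}hler argument of~\cite{RolSimTip11}, is that $\ol{J}\grad s_g$ is transversely holomorphic and Killing (on any metric in the relevant family, via extremality considerations), so its flow preserves the operator $\mathbb{L}^b_g$ and hence preserves its kernel; this lets one conclude that the transport term $\mathcal{L}_{\ol{J}\grad s_g}$ maps $\ker\mathbb{L}^b_g$ into itself and vanishes on the real kernel by a self-adjointness/pairing argument. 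Concretely, pairing $L^b_g f$ against $f$ and taking real parts kills the imaginary first-order term by antisymmetry of the Lie derivative in the $L^2$-inner product, so $\langle L^b_g f, f\rangle \in \R$ equals $\tfrac12\langle \mathbb{L}^b_g f, f\rangle$, forcing the two kernels to agree. I expect this pairing computation to be the crux, with the rest being bookkeeping on the Hodge decomposition.
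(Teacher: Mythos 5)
First, note that the paper does not prove this lemma at all: it is quoted verbatim from \cite{RolSimTip11}, so the comparison is with the standard argument there, and that argument is exactly the clean core already sitting inside your second paragraph --- everything else in your plan is unnecessary. Since $L_g^b=(\ol{\partial}\partial_g^{\#})^*\ol{\partial}\partial_g^{\#}$ and $\mathbb{L}^b_g=(\nabla^- d)^*(\nabla^- d)$ are both of the form $P^*P$ on a compact manifold, $\ker L_g^b=\ker \ol{\partial}\partial_g^{\#}$ and $\ker\mathbb{L}^b_g=\ker(\nabla^- d)$. For a \emph{real} basic $f$ one has $\partial_g^{\#}f=(\grad f)^{1,0}$, and a real basic vector field is transversely real holomorphic exactly when its $(1,0)$-part satisfies the transverse Cauchy--Riemann equations (as the paper notes right after (\ref{eq:trans-hol})). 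Hence both kernels coincide with $\{f\in C^\infty_b(M,\R):\grad f \text{ transversely real holomorphic}\}$, and the lemma is proved; no analysis of the cross-term $\frac{\sqrt{-1}}{2}\mathcal{L}_{\ol{J}\grad s_g}f$ is needed at all.

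Two steps in your write-up are genuinely wrong as stated, though your final pairing computation can be repaired into a second valid proof. (i) The claimed reduction to ``a \emph{complex} basic $f$ has $\partial_g^{\#}f$ transversely holomorphic precisely when its real and imaginary parts have real holomorphic gradients'' is false: for $f=u+\sqrt{-1}v$, holomorphy of $\partial_g^{\#}f=(\grad u)^{1,0}+\sqrt{-1}(\grad v)^{1,0}$ does not force the two pieces to be holomorphic separately. If it did, then since a real holomorphic gradient $\grad u$ makes $\ol{J}\grad u$ simultaneously Hamiltonian and holomorphic, hence transversely Killing, every $g\in\mathfrak{M}(\xi,\ol{J})$ would satisfy $\hol^T(\xi,\ol{J})_0=\mathfrak{k}\oplus\ol{J}\mathfrak{k}$ with $\mathfrak{k}$ its own transverse Killing fields; but that decomposition is a structure theorem for \emph{extremal} metrics (cf.\ (\ref{eq:extr-decomp})), and it fails for generic metrics in the class --- this is precisely why the lemma restricts to real solutions, the complex kernels of the two operators being genuinely different. (ii) Your appeal to extremality ($\ol{J}\grad s_g$ holomorphic and Killing) is not available: the lemma is needed at arbitrary $g\in\mathfrak{M}(\xi,\ol{J})$, and $\ol{J}\grad s_g$ is holomorphic only when $g$ is extremal. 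Fortunately the pairing step never needs it: it only needs $\mathcal{L}_{\ol{J}\grad s_g}$ to be skew-adjoint on real $L^2$, i.e.\ $\ol{J}\grad s_g$ divergence free, which holds for \emph{every} Sasaki metric because $\ol{J}\grad s_g$ is transversally Hamiltonian, so that $\mathcal{L}_{\ol{J}\grad s_g}\bigl(\eta\wedge(d\eta)^m\bigr)=0$. With that substitution, pairing $2L_g^b f=\mathbb{L}^b_g f+\frac{\sqrt{-1}}{2}\mathcal{L}_{\ol{J}\grad s_g}f$ against a real $f$ gives $2\|\ol{\partial}\partial_g^{\#}f\|^2_{L^2}=\|(\nabla^- d)f\|^2_{L^2}$, which also proves the statement. (Likewise, your ``trivial direction'' can be closed without extremality: if $\grad f$ is real holomorphic then $\ol{J}\grad f$ is transversely Killing, so $(\ol{J}\grad s_g)f=-(\ol{J}\grad f)s_g=0$.) So the route is salvageable, but as written the middle reduction would fail and the extremality hypothesis is illegitimate.
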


As in Lemma~\ref{lem:pot-kill} we have the correspondence
\begin{equation}\label{eq:pot-Kill}
\begin{array}{rcl}
\mathcal{H}_g^b \cap C^\infty_b(M,\R) & \simarrow & \aut(g,\eta,\xi,\Phi) \\
v & \longmapsto & X \text{ s.t. }\eta(X)=v,\text{ and }\pi(X)=\frac{1}{2}\ol{J}\grad v.
\end{array}
\end{equation}

\subsubsection{Automorphisms of Sasaki-extremal manifolds}
We recall the structure of \\
$\hol^T(\xi,\ol{J})$ when $(g,\eta,\xi,\Phi)$ is Sasaki-extremal~\cite{BoyGalSim08} as it will important
to what follows.  The result is similar to the theorem of Calabi~\cite{Cal85} on the automorphism group of a K\"{a}hler manifold with
an extremal metric.
\begin{thm}
Let $(g,\eta,\xi,\Phi)\in\mathcal{S}(\xi,\ol{J})$ be a Sasaki-extremal structure.
Then we have the semidirect sum decomposition
\begin{equation}
\hol^T(\xi,\ol{J}) =\mathfrak{a}\oplus\hol^T(\xi,\ol{J})_0,
\end{equation}
where $\mathfrak{a}$ is the Lie algebra of parallel, with respect to $g^T$, sections of $\nu(\mathscr{F}_\xi)$.  And we also have
\begin{equation}\label{eq:extr-decomp}
\hol^T(\xi,\ol{J})_0 =\mathfrak{k}\oplus\ol{J}\mathfrak{k}\oplus\bigl(\bigoplus_{\lambda>0}\mathfrak{h}^{\lambda} \bigr),
\end{equation}
where $\mathfrak{k}=\aut(g,\eta,\xi,\Phi)/{\xi}$ is the image under $\partial_g^{\#}$ of the imaginary valued functions in $\mathcal{H}^b_g$
and $\mathfrak{h}^\lambda =\{\ol{X}\in\hol^T(\xi,\ol{J})_0 : [\partial^{\#}_g s_g ,\ol{X}]=\lambda\ol{X}\}$
and $\mathfrak{k}\oplus\ol{J}\mathfrak{k} =C_{\hol^T(\xi,\ol{J})_0}(\partial^{\#}_g s_g)$, the centralizer of $\partial^{\#}_g s_g$.

Furthermore, the connected component of the identity $G=\Aut(g,\eta,\xi,\Phi)_0 \subset\Fol(M,\mathscr{F}_\xi,\ol{J})$ is a maximal
compact connected subgroup.  And any other maximal compact connected subgroup is conjugate to $G$ in $\Fol(M,\mathscr{F}_\xi,\ol{J})$.
\end{thm}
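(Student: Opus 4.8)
The plan is to transplant Calabi's analysis of the automorphism algebra of an extremal K\"{a}hler manifold~\cite{Cal85} to the transverse K\"{a}hler geometry of $\mathscr{F}_\xi$, using the potential--field dictionary assembled above. The two basic inputs are, first, that the image of $\partial_g^{\#}:\mathcal{H}_g^b\to\hol^T(\xi,\ol{J})$ is exactly the ideal $\hol^T(\xi,\ol{J})_0$ (the kernel of the induced homomorphism to $\Aut(\Alb(M,\xi,\ol{J}))$), and second, that extremality makes $X_s:=\partial_g^{\#}s_g$ a transversely holomorphic section lying in $\hol^T(\xi,\ol{J})_0$, with real holomorphy potential $s_g$. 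By Lemma~\ref{lem:pot-kill} and~\eqref{eq:pot-Kill}, the imaginary part of $X_s$, namely $\tfrac12\ol{J}\grad s_g$, is Killing for $g^T$ and lifts to an element of $\aut(g,\eta,\xi,\Phi)$ generating a torus in $G$.

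First I would establish the semidirect decomposition $\hol^T(\xi,\ol{J})=\mathfrak{a}\oplus\hol^T(\xi,\ol{J})_0$. A $\nabla^T$-parallel section satisfies $\nabla^-X=0$, hence is transversely holomorphic, and parallel sections form an abelian subalgebra $\mathfrak{a}$; the dual $X^\flat$ is then a nonzero parallel, hence harmonic, basic $1$-form, so $\mathfrak{a}$ meets the ideal $\hol^T(\xi,\ol{J})_0$ only in $0$. The content is surjectivity of $\mathfrak{a}\to\hol^T(\xi,\ol{J})/\hol^T(\xi,\ol{J})_0$: given real $X\in\hol^T(\xi,\ol{J})$ I would use the Hodge decomposition~\eqref{eq:real-decom}, $X^{\flat}=\beta_h+du_X+d^cv_X$, together with a transverse Bochner--Weitzenb\"{o}ck argument to show that the harmonic representative $\beta_h$ is in fact $\nabla^T$-parallel, so that $X$ differs from a parallel field by an element of $\hol^T(\xi,\ol{J})_0$. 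This splits off the Albanese directions and realizes them by parallel fields.

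Next I would analyze $\hol^T(\xi,\ol{J})_0$ through $\ad(X_s)$. Since $\re(\ol{J}X_s)$ is Killing, the real torus it generates acts on the finite-dimensional algebra $\hol^T(\xi,\ol{J})_0$ by isometries of $g^T$, so $\ad(X_s)$ is semisimple with real eigenvalues; an integration-by-parts argument on the holomorphy potentials, comparing $L_g^b$ and $\mathbb{L}_g^b$, shows the nonzero eigenvalues may be taken positive, giving $\hol^T(\xi,\ol{J})_0=C(X_s)\oplus\bigoplus_{\lambda>0}\mathfrak{h}^{\lambda}$ with $\mathfrak{h}^\lambda=\{\ol{X}:[X_s,\ol{X}]=\lambda\ol{X}\}$. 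It then remains to identify the centralizer $C(X_s)=\ker\ad(X_s)$ with $\mathfrak{k}\oplus\ol{J}\mathfrak{k}$, where $\mathfrak{k}=\aut(g,\eta,\xi,\Phi)/\{\xi\}$ is the image under $\partial_g^{\#}$ of the imaginary-valued potentials in $\mathcal{H}_g^b$. Because $s_g$ is invariant under the connected isometry group, every Killing field commutes with $X_s$, so $\mathfrak{k}\subseteq C(X_s)$ and hence $\mathfrak{k}\oplus\ol{J}\mathfrak{k}\subseteq C(X_s)$; for the reverse inclusion I would run the Matsushima-type argument (again via Lemma~\ref{lem:pot-kill}) showing that any field commuting with $X_s$ has a holomorphy potential whose real and imaginary parts are separately potentials of fields in $\mathfrak{k}$, so that $C(X_s)=\mathfrak{k}^{\C}=\mathfrak{k}\oplus\ol{J}\mathfrak{k}$.

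Finally, compactness of $M$ makes $G=\Aut(g,\eta,\xi,\Phi)_0$ compact with Lie algebra $\mathfrak{k}$. Given connected compact $K$ with $G\subseteq K\subseteq\Fol(M,\mathscr{F}_\xi,\ol{J})_0$, averaging a transverse K\"{a}hler metric over $K$ produces a $K$-invariant structure in $\mathcal{S}(\xi,\ol{J})$ with the same basic class; since $K$ then preserves the canonically attached extremal field $X_s$ it centralizes $X_s$, forcing $\mathrm{Lie}(K)\subseteq C(X_s)=\mathfrak{k}^{\C}$, whose maximal compact subalgebra is $\mathfrak{k}=\mathrm{Lie}(G)$. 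Hence $K=G$, so $G$ is maximal compact, and conjugacy of maximal compact connected subgroups follows from the standard argument applied to the reductive algebra $\mathfrak{k}^{\C}$ together with the unipotent part $\bigoplus_{\lambda>0}\mathfrak{h}^\lambda$. The principal obstacle is the eigenspace decomposition of the middle paragraph: proving that $\ad(X_s)$ acts semisimply with real nonnegative eigenvalues and that its kernel is precisely the complexified Killing algebra is the transverse analogue of Calabi's central structure theorem, and it is here that the extremal hypothesis and the transverse Bochner identities are genuinely used.
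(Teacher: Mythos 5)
Your first two paragraphs and the eigenspace analysis reproduce, in outline, the Calabi-type structure theory that the paper does not prove at all but simply cites from~\cite{BoyGalSim08}; that part of your plan is reasonable (it is the transverse Matsushima/Calabi argument) but it is not where the content of this theorem lies. The paper's proof is devoted entirely to the final statement, and that is where your proposal has a genuine gap: you dispose of the conjugacy of maximal compact connected subgroups with ``the standard argument applied to the reductive algebra $\mathfrak{k}^{\C}$ together with the unipotent part.'' That argument is a theorem about finite-dimensional Lie groups, whereas the maximal compact subgroups here sit inside $\Fol(M,\mathscr{F}_\xi,\ol{J})$, which is infinite dimensional and, as the paper stresses explicitly, is not even known to carry a Fr\'echet Lie group structure. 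Conjugating the images of $\mathfrak{g}$ and $\mathfrak{g}'$ inside the finite-dimensional algebra $\hol^T(\xi,\ol{J})_0$ does not produce an element of $\Fol(M,\mathscr{F}_\xi,\ol{J})$ conjugating the groups themselves: one must (a) realize $G'$ as $\Aut(g',\eta',\xi,\Phi')_0$ for a structure in $\mathcal{S}(\xi,\ol{J})$ with the \emph{same} Reeb field $\xi$, which the paper gets by averaging over $G'$ and then invoking Nozawa's leafwise Moser-type result plus a Stokes/volume computation to normalize $f^*\tilde{\eta}(\xi)=1$; (b) lift the conjugating element of $\Inn(\hol^T(\xi,\ol{J})_0)$ back through the homomorphism $\Upsilon:\Fol(M,\mathscr{F}_\xi,\ol{J})\rightarrow\Aut(\hol^T(\xi,\ol{J})_0)$; and (c) upgrade the resulting containment of \emph{projected} algebras $\pi(\tilde{\mathfrak{g}})\subseteq\pi(\mathfrak{g})$ (the projection kills the $\Gamma(L_\xi)$-components) to a containment $\tilde{\mathfrak{g}}\subseteq\mathfrak{g}$ of honest algebras of vector fields on $M$. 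Step (c) is itself a computation: writing $\eta=\tilde{\eta}+d\psi+d^c\phi+\alpha$ via Lemma~\ref{lem:trans-def}, gauge-fixing $\psi=0$, showing $\ol{X}\phi=0$ because $\ol{X}$ is Killing for both transverse metrics, and concluding $d(\eta(X))=-X\contr d\eta$, so that $X$ preserves $\eta$; only then does maximality force $\tilde{G}=G$. None of this is captured by an appeal to finite-dimensional conjugacy.

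A secondary but related weakness is in your maximality argument: averaging over a compact $K\supseteq G$ does \emph{not} obviously produce a structure in $\mathcal{S}(\xi,\ol{J})$, because elements of $K$ need not fix $\xi$ (only the line field $L_\xi$), so the averaged contact form need not have Reeb vector field $\xi$ --- this is precisely what the Nozawa/volume step repairs. Likewise your assertion that $K$ ``preserves the canonically attached extremal field $X_s$'' is unjustified as stated: $X_s=\partial^{\#}_g s_g$ is attached to the metric $g$, which $K$ does not preserve, and the averaged structure need not be extremal; making this precise would require an invariance statement (\`a la Futaki--Mabuchi) for the extremal field of the class $\mathcal{S}(\xi,\ol{J})$, which is nowhere established in the paper. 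So the theorem's genuinely new assertion remains unproved in your proposal.
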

\begin{proof}
Everything but the final statement is proved in~\cite{BoyGalSim08}.  The last statement was proved in~\cite{Cal85} in the K\"{a}hler
case, and easily follows from the theory of finite dimensional Lie groups.  It is not as simple in this case as
$\Fol(M,\mathscr{F}_\xi,\ol{J})$ is infinite dimensional, and furthermore is not even known to have a Fr\'echet Lie group structure.

Let $G'\subset\Fol(M,\mathscr{F}_\xi,\ol{J})$ be any maximal connected compact subgroup with Lie algebra $\mathfrak{g}'$.
By applying the familiar averaging argument using a
Haar measure on $G'$ to $(g,\eta,\xi,\Phi)$ we get a Sasaki structure $(\tilde{g},\tilde{\eta},\tilde{\xi},\tilde{\Phi})$ with
$G' =\Aut(\tilde{g},\tilde{\eta},\tilde{\xi},\tilde{\Phi})_0$.  

It is proved in~\cite{Noz11} that there exist an $f\in\Fol(M,\mathscr{F}_\xi,\ol{J})$ so that $f^*\tilde{\eta} (\xi) =c\in\R_{>0}$.
This follows from a leaf wise version of Moser's argument which can be used to prove $f^*\tilde{\eta}|_{L_\xi} =c\eta|_{L_\xi}$.
But since the averaging preserves the volume
\begin{equation}
\begin{split}
\Vol(M,\tilde{g}) & = \frac{1}{m!}\int_M f^*\tilde{\eta}\wedge(\frac{1}{2}f^* d\tilde{\eta})^m \\
                  & = \frac{1}{m!}\int_M c^{m+1} \eta\wedge(\frac{1}{2}d\eta)^m \\
                  & =c^{m+1}\Vol(M,g),
\end{split}
\end{equation}
so $c=1$.  The second equality follows because $f^*\tilde{\eta}-c\eta$ is a basic form and an application of Stokes theorem.  
Therefore $f$ applied to $(\tilde{g},\tilde{\eta},\tilde{\xi},\tilde{\Phi})$
gives a Sasaki structure $(g',\eta',\xi,\Phi')\in\mathcal{S}(\xi,\ol{J})$ with $\Aut(g',\eta',\xi,\Phi')_0$ conjugate
to $G'$ in $\Fol(M,\mathscr{F}_\xi,\ol{J})$.

We have the continuous group homomorphism
\[\Upsilon:\Fol(M,\mathscr{F}_\xi,\ol{J})\rightarrow\Aut(\hol^T(\xi,\ol{J})_0),\]
with $\Upsilon(\phi)\ol{X}=\phi_* \ol{X}$, where $\Fol(M,\mathscr{F}_\xi,\ol{J})$ is given the topology as a closed subgroup of the
diffeomorphism group.  By considering 1-parameter subgroups generated by $X$ such that $\ol{X}\in\hol^T(\xi,\ol{J})_0$, one sees that
the adjoint group of Lie algebra $\hol^T(\xi,\ol{J})_0,\
H=\Inn(\hol^T(\xi,\ol{J})_0) \subseteq\Aut(\hol^T(\xi,\ol{J})_0),$ is in the image of $\Upsilon$.
Note that the Lie algebra $\mathfrak{h}$ of $H=\Inn(\hol^T(\xi,\ol{J})_0)$ is $\hol^T(\xi,\ol{J})_0/Z(\hol^T(\xi,\ol{J})_0)$.

Let $G=\Aut(g,\eta,\xi,\Phi)_0$ for a Sasaki-extremal structure, then by (\ref{eq:extr-decomp}) it is easy
to see $G$ is maximal connected compact.  Let $G'\subset\Fol(M,\mathscr{F}_\xi,\ol{J})$ 
be any other connected maximal compact subgroup with Lie algebra $\mathfrak{g}'$.
As shown above we may assume, up to conjugation, that $G'=\Aut(g',\eta',\xi,\Phi')_0$, for
$(g',\eta',\xi,\Phi')\in\mathcal{S}(\xi,\ol{J})$.
Let $\ol{\mathfrak{g}}':=\pi(\mathfrak{g}')=\mathfrak{g}'/{\xi} \subset\hol^T(\xi,\ol{J})_0$.
Then $\mathfrak{g}'$ has image under $\Upsilon$ given by 
$\Upsilon_* \mathfrak{g}' =\ol{\mathfrak{g}}'/{\ol{\mathfrak{g}}'\cap Z(\hol^T(\xi,\ol{J})_0)}$.
Since $\Upsilon_* \mathfrak{g}\subseteq\mathfrak{h}$ is the Lie algebra of a maximal compact subgroup of $H$, there exists an 
$h\in H$ so that $\Ad(h)_* \Upsilon_* \mathfrak{g}' \subseteq\Upsilon_* \mathfrak{g}$. 

Let $\phi\in\Fol(M,\mathscr{F}_\xi,\ol{J})$ be such that $\Upsilon(\phi)=h$.  Then $\hat{\mathfrak{g}}=\Ad(\phi)_*(\mathfrak{g}')$ satisfies $\Upsilon_* \hat{\mathfrak{g}}\subseteq\Upsilon_* \mathfrak{g}$.  Since $\pi(\mathfrak{g})$ contains $Z(\hol^T(\xi,\ol{J})_0)$,
we have $\pi(\hat{\mathfrak{g}})\subseteq\pi(\mathfrak{g})$.  Applying $\phi$ to $(g',\eta',\xi,\Phi')$ and then a transformation
as $f$ above, we get a Sasaki structure $(\tilde{g},\tilde{\eta},\xi,\tilde{\Phi})$ with 
$\Aut(\tilde{g},\tilde{\eta},\xi,\tilde{\Phi})_0 =\tilde{G}$ conjugate to $G'$ in $\Fol(M,\mathscr{F}_\xi,\ol{J})$ and 
with $\pi(\tilde{\mathfrak{g}})\subseteq\pi(\mathfrak{g})$.

From Lemma~\ref{lem:trans-def} we have $\eta =\tilde{\eta} +d\psi +d^c \phi +\alpha$, with $\psi,\ \phi$ basic and $\alpha$
$\tilde{g}$ harmonic.  So $\omega^T =\tilde{\omega}^T +\frac{1}{2}dd^c\phi$.  
After a gauge transformation (See Remark~\ref{rmk:gauge-trans}) we may assume that $\psi=0$.
Any $\ol{X}\in\pi(\tilde{\mathfrak{g}})\subseteq\pi(\mathfrak{g})$ is both $g^T$ and $\tilde{g}^T$ Killing.  Since
\[0=\mathcal{L}_{\ol{X}}dd^c\phi =dd^c \ol{X}\phi,\]
$\ol{X}\phi$ is constant, and thus $\ol{X}\phi=0$.  If $X\in\tilde{\mathfrak{g}}$, then $d(\tilde{\eta}(X))=-X\contr d\tilde{\eta}$.  
We have
\begin{equation}
\begin{split}
d(\eta(X)) & =d(\tilde{\eta}(X) +d^c \phi(X) +\alpha(X)) \\
                 & =-X\contr d\tilde{\eta} +d(d^c \phi(X)) \\
                 & =-X\contr d\tilde{\eta} -(X\contr dd^c\phi )\\
                 & =-X\contr d\eta,
\end{split}
\end{equation}
where the second equality is because $\alpha$ is harmonic and the third because $\mathcal{L}_X d^c \phi =0$.
Therefore $\tilde{\mathfrak{g}}\subseteq\mathfrak{g}$.  Since $\tilde{G}$ is maximal, we must have $\tilde{G}=G$.
Therefore $G'$ is conjugate to $G$ in $\Fol(M,\mathscr{F}_\xi,\ol{J})$.
\end{proof}
\begin{rmk}
If we have two Sasaki-extremal structures $(g_i,\eta_i,\xi,\Phi_i)\in\mathcal{S}(\xi,\ol{J}),\ i=1,2$, then there is a
$\phi\in\Fol(M,\mathscr{F}_\xi,\ol{J})$ so that $\phi_*(g_2,\eta_2,\xi,\Phi_2)=(\hat{g},\hat{\eta},\xi,\hat{\Phi})$ satisfies $\Aut(\hat{g},\hat{\eta},\xi,\hat{\Phi})_0 =\Aut(g_1,\eta_1,\xi,\Phi_1)_0$.  One should be able to extend the proof of uniqueness
of extremal K\"{a}hler metrics~\cite{CheTia08} to the Sasaki case.
\end{rmk}

From now on $\G'\subset\Fol(M,\mathscr{F}_\xi,\ol{J})$ will be a fixed maximal connected compact subgroup,
and $G\subseteq G'$ a connected compact subgroup with Lie algebras $\{\xi\}\subseteq\mathfrak{g}\subseteq\mathfrak{g}'$.
As seen above there is a Sasaki structure $(g,\eta,\xi,\Phi)$ with $G'=\Aut(g,\eta,\xi,\Phi)_0$.  We define several Lie algebras:
\begin{itemize}
\item  $\mathfrak{z}=Z(\mathfrak{g})$, the center of $\mathfrak{g}$,
\item  $\mathfrak{z}' =C_{\mathfrak{g}'}(\mathfrak{g})$, the centralizer of $\mathfrak{g}$ in $\mathfrak{g}'$,
\item  $\mathfrak{z}''=C_{\hol^T(\xi,\ol{J})_0}(\mathfrak{g})$, the centralizer of $\mathfrak{g}$ in $\hol^T(\xi,\ol{J})_0$,
\item  $\mathfrak{p} =N_{\mathfrak{g}'}(\mathfrak{g})$, the normalizer of $\mathfrak{g}$ in $\mathfrak{g}'$,
\item  $\mathfrak{q} =N_{\hol^T(\xi,\ol{J})_0}(\mathfrak{g})$, the normalizer of $\mathfrak{g}$ in $\hol^T(\xi,\ol{J})_0$.
\end{itemize}

We denote by $\mathcal{H}_g^{\mathfrak{s}} \subseteq\mathcal{H}_g^b$ the corresponding space of holomorphy potentials where
$\mathfrak{s}$ is one of the above Lie algebras.  Note that
\[\mathcal{H}_g^{\mathfrak{z}}\subseteq\mathcal{H}_g^{\mathfrak{z}'} \subseteq\mathcal{H}_g^{\mathfrak{p}} \subseteq\mathcal{H}_g^{\mathfrak{g}'}\]
consist of purely imaginary functions, and $\mathcal{H}_g^{\mathfrak{z}}$ (respectively $\mathcal{H}_g^{\mathfrak{z}'}$) consist of $G$-invariant functions in $\mathcal{H}_g^{\mathfrak{g}}$ (respectively $\mathcal{H}_g^{\mathfrak{g}'}$).

We also have the following whose proof is just as in~\cite{RolSimTip11}.
\begin{lem}
We have the isomorphisms of Lie algebras induced by the injections
\[ \mathfrak{z}'/\mathfrak{z} \cong\mathfrak{p}/\mathfrak{g} ,\quad \mathfrak{z}''/\mathfrak{z} \cong\mathfrak{q}/\mathfrak{g} .\]
\end{lem}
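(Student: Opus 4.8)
The plan is to reduce both isomorphisms to a single elementary fact about the adjoint action of a compact group: if a compact connected group $G$ with Lie algebra $\mathfrak{g}$ acts by automorphisms on a finite-dimensional Lie algebra $\mathfrak{a}$ containing a copy of $\mathfrak{g}$ as a subalgebra, the action restricting to $\ad(\mathfrak{g})$ there, then the normalizer splits as $N_{\mathfrak{a}}(\mathfrak{g})=\mathfrak{g}+C_{\mathfrak{a}}(\mathfrak{g})$. Since both displayed maps are \emph{induced by inclusions}, the real content is this splitting (surjectivity onto the quotient); injectivity is formal. I would apply the fact with $\mathfrak{a}=\mathfrak{g}'$ for the first isomorphism and with $\mathfrak{a}=\hol^T(\xi,\ol{J})_0$ for the second.

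First I would record the formal part. An element centralizing $\mathfrak{g}$ certainly normalizes it, so $\mathfrak{z}'\subseteq\mathfrak{p}$ and $\mathfrak{z}''\subseteq\mathfrak{q}$; and by definition of the normalizer $\mathfrak{g}$ is an ideal of $\mathfrak{p}$ and $\pi(\mathfrak{g})$ an ideal of $\mathfrak{q}$. Hence the composites $\mathfrak{z}'\hookrightarrow\mathfrak{p}\to\mathfrak{p}/\mathfrak{g}$ and $\mathfrak{z}''\hookrightarrow\mathfrak{q}\to\mathfrak{q}/\pi(\mathfrak{g})$ are Lie algebra homomorphisms with kernels $\mathfrak{z}'\cap\mathfrak{g}$ and $\mathfrak{z}''\cap\pi(\mathfrak{g})$. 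I would then identify these kernels: $\mathfrak{z}'\cap\mathfrak{g}$ is precisely the set of elements of $\mathfrak{g}$ commuting with all of $\mathfrak{g}$, that is $Z(\mathfrak{g})=\mathfrak{z}$; and since every automorphism in $G'=\Aut(g,\eta,\xi,\Phi)_0$ fixes $\xi$ we have $\xi\in Z(\mathfrak{g}')\subseteq\mathfrak{z}$, so $Z(\pi(\mathfrak{g}))=\pi(\mathfrak{z})$ and the second kernel is $\pi(\mathfrak{z})$. By the second isomorphism theorem this yields the injective homomorphisms $\mathfrak{z}'/\mathfrak{z}\hookrightarrow\mathfrak{p}/\mathfrak{g}$ and $\mathfrak{z}''/\pi(\mathfrak{z})\hookrightarrow\mathfrak{q}/\pi(\mathfrak{g})$ of the statement, where under $\pi$ one identifies $\pi(\mathfrak{z})$ and $\pi(\mathfrak{g})$ with $\mathfrak{z}/\{\xi\}$ and $\mathfrak{g}/\{\xi\}$.

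It remains to prove surjectivity, i.e. $\mathfrak{p}=\mathfrak{g}+\mathfrak{z}'$ and $\mathfrak{q}=\pi(\mathfrak{g})+\mathfrak{z}''$. For the first, since $G$ is compact I would average to obtain an $\Ad(G)$-invariant inner product on $\mathfrak{g}'$; the subalgebra $\mathfrak{g}$ is $\ad(\mathfrak{g})$-stable, so its orthogonal complement $\mathfrak{m}$ is too, giving $\mathfrak{g}'=\mathfrak{g}\oplus\mathfrak{m}$ with $[\mathfrak{g},\mathfrak{m}]\subseteq\mathfrak{m}$. Given $X\in\mathfrak{p}$, write $X=X_0+X_1$ with $X_0\in\mathfrak{g}$ and $X_1\in\mathfrak{m}$. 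Then $[\mathfrak{g},X_1]=[\mathfrak{g},X]-[\mathfrak{g},X_0]\subseteq\mathfrak{g}$, while also $[\mathfrak{g},X_1]\subseteq\mathfrak{m}$, so $[\mathfrak{g},X_1]\subseteq\mathfrak{g}\cap\mathfrak{m}=0$ and $X_1\in\mathfrak{z}'$. Thus $X\in\mathfrak{g}+\mathfrak{z}'$, as required.

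The second isomorphism follows by the identical decomposition-and-projection argument with $\hol^T(\xi,\ol{J})_0$ in place of $\mathfrak{g}'$ and $\pi(\mathfrak{g})$ in place of $\mathfrak{g}$. The one point that needs care — and the main obstacle — is that $\hol^T(\xi,\ol{J})_0$ is \emph{not} the Lie algebra of a compact group, so one cannot average an inner product making it a compact Lie algebra and argue as above. I would instead use the genuine action of the compact group $G$ on the finite-dimensional vector space $\hol^T(\xi,\ol{J})_0$ furnished by $\Upsilon$, namely $\Upsilon(\phi)\ol{X}=\phi_*\ol{X}$, whose differential along $\mathfrak{g}$ is the adjoint action: because $G$ is compact this finite-dimensional representation is completely reducible, so the $\ad(\pi(\mathfrak{g}))$-invariant subspace $\pi(\mathfrak{g})$ admits an invariant complement $\mathfrak{n}$, and the same projection computation gives $\mathfrak{q}=\pi(\mathfrak{g})+\mathfrak{z}''$. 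Together with the injections of the second paragraph, this establishes both displayed maps as isomorphisms of Lie algebras.
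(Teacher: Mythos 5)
Your proof is correct and is essentially the paper's own argument: the paper disposes of this lemma by citing \cite{RolSimTip11}, and the proof there is exactly your compact-group splitting — average an inner product over $G$, take the $\Ad(G)$-invariant (hence $\ad(\mathfrak{g})$-invariant) complement of $\mathfrak{g}$, and observe that the component of a normalizing element in that complement must centralize $\mathfrak{g}$. Your handling of the second case is also the right one: since $\hol^T(\xi,\ol{J})_0$ is not of compact type, one cannot make it into a compact Lie algebra, but the genuine linear $G$-action furnished by $\Upsilon$ makes the same averaging/complete-reducibility argument go through with $\pi(\mathfrak{g})$ in place of $\mathfrak{g}$.

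One small repair is needed in your injectivity step for the second isomorphism. The equality $Z(\pi(\mathfrak{g}))=\pi(\mathfrak{z})$ does not follow merely from $\xi$ being central: for a general Lie algebra, the center of a quotient by a central ideal can be strictly larger than the image of the center (quotient the Heisenberg algebra by its center and you get an abelian algebra). Also, the inclusion $Z(\mathfrak{g}')\subseteq\mathfrak{z}$ as you wrote it is neither justified nor needed; what you need and have is $\xi\in\mathfrak{g}\cap Z(\mathfrak{g}')\subseteq Z(\mathfrak{g})=\mathfrak{z}$. The claim $Z(\pi(\mathfrak{g}))=\pi(\mathfrak{z})$ is nevertheless true, because $\mathfrak{g}$ is the Lie algebra of the compact group $G$ and hence reductive: $\mathfrak{g}=\mathfrak{z}\oplus[\mathfrak{g},\mathfrak{g}]$ with $[\mathfrak{g},\mathfrak{g}]$ semisimple (so centerless) and $\xi\in\mathfrak{z}$, whence $Z(\mathfrak{g}/\{\xi\})=\mathfrak{z}/\{\xi\}=\pi(\mathfrak{z})$. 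With that one-line fix, both isomorphisms are fully established by your argument.
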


\subsection{Relative Futaki invariant}

\subsubsection{Reduced scalar curvature}
We let $L^2_k(M)$ denote the k\superscript{th} real Sobolev space.  We assume $k>m+1$, where $\dim M=2m+1$, so that
$L^2_k(M)$ is a Banach algebra.  Denote $L^2_{k,G}(M)$ to be the subspace of $G$-invariant functions in $L^2_k(M)$, which
is also a Hilbert space and Banach algebra.  Note that $L^2_{k,G}(M)$ are basic functions since we assume that $\xi\subset\mathfrak{g}$.
The $L^2$-inner product is defined using the metric of the $G$-invariant Sasaki structure.

We have an orthogonal decomposition
\[ L^2_{k,G}(M)=\sqrt{-1}\mathcal{H}_g^{\mathfrak{z}} \oplus W_{g,k}, \]
with the projections
\[\pi_g^G :L^2_{k,G}(M)\rightarrow\sqrt{-1}\mathcal{H}_g^{\mathfrak{z}}\text{  and  }\pi_g^W :L^2_{k,G}(M)\rightarrow W_{g,k}.\]
Associated to a $G$-invariant Sasaki structure $(g,\eta,\xi,\Phi)$ we define the \emph{reduced scalar curvature}
\begin{equation}
s_g^G =\pi_g^W (s_g).
\end{equation}
The condition $s_g^G =0$ is equivalent to $s_g \in\sqrt{-1}\mathcal{H}_g^{\mathfrak{z}}\subset\mathcal{H}_g^b$, so this implies
that $g$ is Sasaki-extremal.

\subsubsection{Reduced Ricci form and Ricci potential}
As in~\cite{Sim00,Sim05} we define the \emph{reduced Ricci form} and Ricci potential.  Let $L^2_{k,G}(\Lambda^{1,1}_b M)$ be the space
of basic $G$-invariant $(1,1)$-forms in $L^2_k$.  As in~\cite{Sim05} one can define a projection
\[ \Pi_g^G :L^2_{k,G}(\Lambda^{1,1}_b M) \rightarrow L^2_{k,G}(\Lambda^{1,1}_b M), \]
by
\[  \Pi_g^G \gamma =\gamma +dd^c f ,\]
where $f =G_g(\pi^W(\omega^T ,\gamma))$.  This projection intertwines the trace with $\pi_g^G$, that is
$(\omega^T,\Pi_g^G \gamma) =\pi^G_g (\omega^T,\gamma)$.

We obtain the \emph{reduced Ricci form}~\cite{Sim05} by
\begin{equation}\label{eq:Ricci-form}
\rho^G =\Pi_g^G \rho^T,
\end{equation}
and the related identity
\begin{equation}\label{eq:Ricci-pot}
\rho^G =\rho^T +\frac{1}{2}dd^c \psi^G_g.
\end{equation}
One has $\psi^G_g =2G_g(\pi^W(\omega^T ,\rho^T))$ and $\rho^G =\rho^T$ if and only if $s_g^G =0$.

\subsubsection{Relative Futaki invariant}
Suppose we have a $G$-invariant Sasaki structure $(g,\eta,\xi,\Phi)$ on $M$.  We define the \emph{relative Futaki
invariant}
\begin{equation}\label{eq:rel-Futaki}
\mathcal{F}_{G,\xi}(X) =\int_M d^c \psi^G_g(X)\, d\mu_g,
\end{equation}
where $X\in\hol^T(\xi,\ol{J})$ is any real transversely holomorphic vector field and $\psi^G_g$ is the Ricci potential
(\ref{eq:Ricci-pot}) of $g$.  Though defined in terms of the metric, (\ref{eq:rel-Futaki}) is independent of the
$G$-invariant Sasaki structure in $\mathcal{S}(\xi,\ol{J})$.  See~\cite{RolSimTip11}, and also~\cite{BoyGalSim08} and~\cite{FutOnWan09}
for the Sasaki-Futaki invariant,
where $\psi^G_g$ is replaced by the usual Ricci-potential $\psi_g =2G_g((\omega^T ,\rho^T)-(\omega^T ,\rho^T)_0)$, with
\[\begin{split}
(\omega^T ,\rho^T)_0 & =\frac{\int_M (\omega^T ,\rho^T)\, d\mu_g}{\int_M d\mu_g} \\
					&  =\frac{\int_M  s^T_g\, d\mu_g}{\int_M d\mu_g}  \\
					& = \frac{4m\pi c_1(\mathscr{F}_\xi)\cup[\omega^T]^{m-1}}{[\omega^T]^m}.
\end{split}\]
the average of the scalar curvature.

In terms of the Hodge decomposition (\ref{eq:real-decom}) of the dual 1-form $X^{\flat} =X^\flat_h +du_X +d^c v_X$ we have
\begin{equation}\label{eq:rel-Futaki-form}
\begin{split}
\mathcal{F}_{G,\xi}(X) & =\int_M (J^*X^\flat_h+J^*du_X +J^*d^c v_X, d\psi^G_g)\, d\mu_g \\
					   & =\int_M (J^*X^\flat_h -d^c u_X +dv_X, d\psi^G_g)\, d\mu_g \\
				       & =\int_M (dv_X, d\psi^G_g)\, d\mu_g \\					
					   & =\int_M (v_X, \Delta_g \psi^G_g)\, d\mu_g \\	
					   & =\int_M v_X s_g^G\, d\mu_g. 	
\end{split}
\end{equation}
The third equality follows because $J^*X^\flat_h$ is harmonic and $d^c u_X$ is coclosed.

It follows from (\ref{eq:rel-Futaki-form}) that if $X\in\mathfrak{g}$ then $\mathcal{F}_{G,\xi}(X)=0$.  Thus we have
the $\R$-linear character
\begin{equation}\label{eq:rel-Futaki-map}
\mathcal{F}_{G,\xi}:\mathfrak{q}/\mathfrak{g} \rightarrow\R.
\end{equation}

A $G$-invariant Sasaki-extremal structure in $\mathcal{S}(\xi,\ol{J})$ has $s_g^G =0$ if and only if (\ref{eq:rel-Futaki-map})
vanishes.

\section{Deformations of Sasaki structures}

Besides the transversal K\"{a}hler deformations of a Sasaki structure $(g,\eta,\xi,\Phi)$ considered in Lemma~\ref{lem:trans-def}
we will consider two other deformations.  First, we will consider deformations of the transversal complex structure $\ol{J}$ on
$\mathscr{F}_\xi$.  In particular, we will consider deformations equivariant with respect to the compact group $G$.
Second, we will also consider deformations of the Reeb vector field $\xi$.  Together these give a subspace of the 
versal deformation space of $(\mathscr{F}_\xi,\ol{J})$ as a transversely holomorphic foliation.  

\subsection{Deformation of foliations}\label{subsect:defor}

\subsubsection{Kuranishi space}\label{subsubsec:Kurani}
We consider the deformations of the transversely holomorphic foliation $(\mathscr{F}_\xi,\ol{J})$.  In particular, we are interested
in the deformations of $(\mathscr{F}_\xi,\ol{J})$ that preserve its structure as a smooth foliation.  The existence of
a versal space for deformations, which fix the smooth foliation structure, was proved in~\cite{KacNic89}, and the universal
property of the versal space was strengthened in~\cite{Gir92}.  Note this requires an assumption on the foliation, of which
being transversally Hermitian is sufficient, which is clearly the case for $(\mathscr{F}_\xi,\ol{J})$.

We denote by $\mathcal{A}^{0,k} =\Gamma(\Lambda^{0,k}_b\otimes\nu(\mathscr{F})^{1,0})$ the space of smooth basic forms of
type $(0,k)$ with values in $\nu(\mathscr{F})^{1,0}$, and we have the Dolbeault complex
\begin{equation}\label{eq:Dol-comp}
 0\rightarrow \mathcal{A}^{0,0} \overset{\ol{\partial}_b}{\longrightarrow}\mathcal{A}^{0,1} \overset{\ol{\partial}_b}{\longrightarrow}\cdots.
\end{equation}
The tangent space to the versal space is the first cohomology of $(\mathcal{A}^{0,\bullet},\ol{\partial}_b)$ denoted
$H^1_{\ol{\partial}_b}(\mathcal{A}^{0,\bullet})$.

The versal space $\mathcal{V}$ is the germ of $\theta^{-1}(0)$ where $\theta$ is an analytic map
\begin{equation}
 H^1_{\ol{\partial}_b}(\mathcal{A}^{0,\bullet}) \overset{\theta}{\rightarrow} H^2_{\ol{\partial}_b}(\mathcal{A}^{0,\bullet}).
\end{equation}
Thus there exists a family of transverse holomorphic structures on $\mathscr{F}_\xi$ parametrized by $\mathcal{V}$,
$(\mathscr{F}_\xi ,\ol{J}_t)_{t\in\mathcal{V}}$, such that any other deformation is given by a pull-back via a map to
$\mathcal{V}$.

As above, we consider a compact group $G$ acting on $(\mathscr{F}_\xi,\ol{J})$.  One can consider the complex of
$G$-invariant forms $\mathcal{A}^{0,k}_G =\Gamma(\Lambda^{0,k}_b\otimes\nu(\mathscr{F})^{1,0})^G$,
\begin{equation}\label{eq:Dol-comp-inv}
 0\rightarrow \mathcal{A}_G^{0,0} \overset{\ol{\partial}_b}{\longrightarrow}\mathcal{A}_G^{0,1} \overset{\ol{\partial}_b}{\longrightarrow}\cdots.
\end{equation}
By considering Hodge theory for transversally elliptic operators one can show that the cohomology of (\ref{eq:Dol-comp-inv}) is
naturally identified with $H^k_{\ol{\partial}_b}(\mathcal{A}^{0,\bullet})^G$, the cohomology classes fixed by $G$ of (\ref{eq:Dol-comp}).
The tangent space of the subspace $\mathcal{V}^G \subseteq\mathcal{V}$ of $G$-equivariant deformations is
$H^1_{\ol{\partial}_b}(\mathcal{A}^{0,\bullet})^G$.

\subsubsection{Sasaki structures}

Suppose we have a family of $G$-invariant transversal complex structures
$(\mathscr{F}_\xi,\ol{J}_t),\ t\in\mathcal{B}\subseteq\mathcal{V}^G$, with $\mathcal{B}$ a smooth subspace.
By the results of~\cite{KacGmi97}, which extend the stability result of Kodaira and Spencer on deformations of K\"{a}hler manifolds
to deformations of foliations fixing the differentiable structure, we have transverse K\"{a}hler structures
$\omega^T_t$ on $(\mathscr{F}_\xi,\ol{J}_t)$ with $t,\in\mathcal{B}$, after possibly shrinking $\mathcal{B}$.
Although the transversely K\"{a}hler property is stable, there is a further obstruction to the existence of a Sasaki structure
compatible with $(\mathscr{F}_\xi,\ol{J}_t)$ for $t\in\mathcal{B}$.  The necessary and sufficient condition for the 
existence of a Sasaki structure were obtained in~\cite{Noz11} for more general deformations, not necessarily preserving
the smooth structure of $\mathscr{F}$.  But for our purposes, we will only consider deformations of $(\mathscr{F}_\xi,\ol{J})$
preserving the smooth foliation structure.

\begin{defn}\label{defn:Euler-obst}
A deformation $(\mathscr{F}_\xi,\ol{J}_t),\ t\in\mathcal{B}\subseteq\mathcal{V}$ of the underlying foliation of a Sasaki
structure $(g,\eta,\xi,\Phi)$ is of \emph{$(1,1)$-type} if
for all $t\in\mathcal{B}$ the $(0,2)$-component of the Euler class $[d\eta^{0,2}]\in H^{0,2}_{b,t}(M/\mathscr{F}_\xi)$ is zero, where
$H^{0,2}_{b,t}(M/\mathscr{F}_\xi)=H^2_{\ol{\partial}_{b,t}}(\Gamma(\Lambda_b^{0,\bullet}))$ is the basic Dolbeault cohomology for the
transversal complex structure $\ol{J}_t$.
\end{defn}
 
\begin{thm}[\cite{Noz11}]\label{thm:Euler-obst}
Let $(\mathscr{F}_\xi,\ol{J}_t),\ t\in\mathcal{B}\subseteq\mathcal{V}$ be a deformation of the Reeb foliation of
$(g,\eta,\xi,\Phi)$.  Then there exists a smooth
family of $(g_t,\eta_t,\xi,\Phi_t)\in\mathcal{S}(\xi,\ol{J}_t),\ t\in V\subset\mathcal{B}$ of compatible Sasaki structures,
where $V$ is a neighborhood of zero in $\mathcal{B}$, if and only if the deformation is of $(1,1)$-type restricted to $V$.  
\end{thm}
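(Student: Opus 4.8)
The plan is to reduce the existence of a compatible Sasaki structure to the existence of a positive transverse $(1,1)$-form in a fixed basic cohomology class, and then to solve the resulting $\ol{\partial}_{b,t}$-equation uniformly in $t$. Since the deformation fixes $\mathscr{F}_\xi$ as a smooth foliation and only varies the transverse complex structure to $\ol{J}_t$, the normal bundle $\nu(\mathscr{F}_\xi)$ and the space of smooth basic forms are unchanged; only their type decomposition depends on $t$. A compatible Sasaki structure with Reeb field $\xi$ is determined by a contact form $\eta_t$ with $\eta_t(\xi)=1$ and $\mathcal{L}_\xi\eta_t=0$ whose transverse part $\tfrac12 d\eta_t$ is a positive $(1,1)$-form for $\ol{J}_t$; the remaining data $\Phi_t$ and $g_t$ are then recovered from $\ol{J}_t$ and $\eta_t$ via (\ref{eq:metric}), and strict pseudoconvexity together with Proposition~\ref{prop:CR-Sasaki} yields an honest Sasaki structure. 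Writing $\eta_t=\eta+\beta_t$, the constraints $\eta_t(\xi)=1$ and $\mathcal{L}_\xi\eta_t=0$ force $\beta_t$ to be a real basic $1$-form, so $d\eta_t=d\eta+d\beta_t$ stays in the fixed basic class $[d\eta]_b\in H^2_b(M/\mathscr{F}_\xi,\R)$. Thus everything hinges on finding $\beta_t$, depending smoothly on $t$ with $\beta_0=0$, so that $d\eta_t$ is of type $(1,1)$ and positive for $\ol{J}_t$.

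For necessity, suppose $(g_t,\eta_t,\xi,\Phi_t)\in\mathcal{S}(\xi,\ol{J}_t)$ exists. Then $d\eta_t$ is a positive $(1,1)$-form for $\ol{J}_t$ lying in $[d\eta]_b$, so $d\eta_t-d\eta=d\beta_t$ is exact. Decomposing by $\ol{J}_t$-type and using that $d\eta$ is real and closed (whence the $(0,3)$-part of $d(d\eta)=0$ gives $\ol{\partial}_{b,t}(d\eta)^{0,2}_t=0$), the $(0,2)$-component yields $(d\eta)^{0,2}_t=-\ol{\partial}_{b,t}\beta_t^{0,1}$. Hence $(d\eta)^{0,2}_t$ is $\ol{\partial}_{b,t}$-exact and $[d\eta^{0,2}]=0$ in $H^{0,2}_{b,t}(M/\mathscr{F}_\xi)$; that is, the deformation is of $(1,1)$-type on the relevant neighborhood.

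For sufficiency, assume the deformation is of $(1,1)$-type. Then $(d\eta)^{0,2}_t$ is $\ol{\partial}_{b,t}$-exact for each $t$, and I would solve $\ol{\partial}_{b,t}\,\beta_t^{0,1}=-(d\eta)^{0,2}_t$ by transverse Hodge theory: the scalar basic Dolbeault complex $(\Gamma(\Lambda_b^{0,\bullet}),\ol{\partial}_{b,t})$ is transversally elliptic, so one has a harmonic projection and a Green operator $G_t$ for the $\ol{\partial}_{b,t}$-Laplacian, and the canonical solution $\beta_t^{0,1}=-\ol{\partial}_{b,t}^{\,*}G_t\,(d\eta)^{0,2}_t$ is well defined precisely because the $(1,1)$-type hypothesis makes the harmonic part of $(d\eta)^{0,2}_t$ vanish. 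Setting $\beta_t=2\re\beta_t^{0,1}$ produces a real basic $1$-form for which the $(0,2)$- and $(2,0)$-components of $d\eta_t=d\eta+d\beta_t$ cancel, so $d\eta_t$ is of type $(1,1)$. Since $\beta_0=0$ and $\tfrac12 d\eta=\omega^T$ is positive, $\tfrac12 d\eta_t$ remains positive for $t$ in a possibly smaller neighborhood $V$ of $0$, giving a transverse K\"ahler form and hence the desired family in $\mathcal{S}(\xi,\ol{J}_t)$.

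The main obstacle is the smooth dependence on $t$ of the transverse Hodge theory. One must show that the harmonic projections and Green operators $G_t$ for the family $\ol{\partial}_{b,t}$ vary smoothly, which is delicate when the dimensions of the basic Dolbeault groups jump. The vanishing of $[d\eta^{0,2}]$ guarantees solvability for each fixed $t$, but smoothness of $t\mapsto\beta_t$ requires that the relevant spectral projections depend continuously on $t$; I would handle this by working in suitable Sobolev completions, using upper semicontinuity of the harmonic spaces, and restricting to a small $V$ where the construction is uniform. Elliptic regularity then returns smooth $\beta_t$, and shrinking $V$ preserves positivity, completing both directions.
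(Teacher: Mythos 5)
First, a point of comparison you could not have known: the paper does not prove Theorem~\ref{thm:Euler-obst} at all; it is quoted from Nozawa~\cite{Noz11}, so your proposal can only be judged on its own merits and against Nozawa's argument. Your reduction is correct: a structure in $\mathcal{S}(\xi,\ol{J}_t)$ is equivalent to a real basic $1$-form $\beta_t$ such that $d\eta+d\beta_t$ is positive and of type $(1,1)$ for $\ol{J}_t$ (integrability of the lifted CR structure follows from transverse integrability of $\ol{J}_t$ together with the $(1,1)$ condition, after which Proposition~\ref{prop:CR-Sasaki} produces the Sasaki structure), and your necessity argument, namely that $(d\eta)^{0,2}_t=-\ol{\partial}_{b,t}\beta_t^{0,1}$ is $\ol{\partial}_{b,t}$-exact, is complete.

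The sufficiency direction, however, has a genuine gap at exactly the point you flag and then wave away. Upper semicontinuity of $t\mapsto h^{0,2}_{b,t}$ is precisely what \emph{permits} the bad scenario: $h^{0,2}_{b,t}<h^{0,2}_{b,0}$ for all $t\neq 0$ near $0$. In that case some eigenvalues $\lambda(t)>0$ of the basic Dolbeault Laplacian tend to $0$ as $t\to 0$, the Green operators $G_t$ blow up, the harmonic projections are discontinuous at $t=0$, and there is no bound on your canonical solution $-\ol{\partial}_{b,t}^{\,*}G_t(d\eta)^{0,2}_t$: exactness of $(d\eta)^{0,2}_t$ gives solvability for each fixed $t$, but the norm of the minimal solution is governed by $1/\lambda(t)$, against which the decay of $(d\eta)^{0,2}_t$ cannot be compared in general; shrinking $V$ does not help because the jump occurs at every $t\neq 0$. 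What closes the gap in this setting is to first \emph{rule out} the scenario: for small $t$ the foliation $(\mathscr{F}_\xi,\ol{J}_t)$ is transversely K\"ahler by~\cite{KacGmi97}, so the basic Hodge decomposition of~\cite{Kac90} gives $\sum_{p+q=2}h^{p,q}_{b,t}=\dim H^2_b(M/\mathscr{F}_\xi,\C)$; the right-hand side is independent of $t$ because the smooth foliation is fixed, and combined with upper semicontinuity of each $h^{p,q}_{b,t}$ this forces $h^{0,2}_{b,t}$ to be locally constant. Only with this constancy do the harmonic projections and Green operators vary smoothly, after which your construction (plus elliptic regularity and continuity of positivity from $\beta_0=0$) goes through. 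This constancy step --- the transverse analogue of the constancy of Hodge numbers under K\"ahler deformations --- is the essential missing content, and it is where the transversely K\"ahler hypothesis enters irreplaceably rather than as a convenience.
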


An application of a transversal Kodaira-Nakano vanishing theorem gives the following which is basically Corollary 1.4
of~\cite{Noz11}.
\begin{prop}\label{prop:obst-pos}
Let $(\mathscr{F}_\xi,\ol{J}_t),\ t\in\mathcal{B}$ be a deformation of the underlying foliation of a Sasaki
structure $(g,\eta,\xi,\Phi)$, and suppose the first Chern class 
$c_1^b(\mathscr{F}_\xi)=\frac{1}{2\pi}[\rho^T]\in H^2_b(M/\mathscr{F}_\xi ,\R)$ is representable by a basic positive
$(1,1)$-form, then after restricting to a neighborhood of zero $V\subset\mathcal{B}$ the deformation is of $(1,1)$-type.
\end{prop}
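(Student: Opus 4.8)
The plan is to establish the stronger vanishing $H^{0,2}_{b,t}(M/\mathscr{F}_\xi)=0$ for all $t$ in a suitable neighborhood $V\subset\mathcal{B}$ of zero; once this is known the $(1,1)$-type condition of Definition~\ref{defn:Euler-obst} holds trivially, the class $[d\eta^{0,2}]$ having no room but to vanish.

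First I would treat the central fiber $t=0$. By the basic $dd^c$-lemma for the transverse K\"{a}hler structure, the hypothesis that $c_1^b(\mathscr{F}_\xi)=\frac{1}{2\pi}[\rho^T]$ is representable by a positive basic $(1,1)$-form is equivalent to the statement that the transverse anticanonical bundle $(K^T)^{-1}=\det\nu(\mathscr{F}_\xi)^{1,0}$ carries a basic Hermitian metric of positive curvature. The key identification is $\Lambda^{0,q}_b\cong\Lambda^{m,q}_b\otimes(K^T)^{-1}$, which gives $H^{0,q}_{b}(M/\mathscr{F}_\xi)\cong H^{m,q}_{b}\bigl((K^T)^{-1}\bigr)$, the Nakano cohomology with values in $(K^T)^{-1}$. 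Since $(K^T)^{-1}$ is positive, the transversal Kodaira-Nakano vanishing theorem yields $H^{m,q}_{b}((K^T)^{-1})=0$ for all $q\geq 1$, and hence $H^{0,2}_{b,0}=0$.

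Next I would propagate the vanishing to nearby $t$ by openness. The smooth foliation $\mathscr{F}_\xi$ and its transverse complex dimension $m$ are fixed throughout, so $(K^T_t)^{-1}$ is a continuous family of transverse holomorphic line bundles on a fixed smooth complex bundle, and by the stability of the transverse K\"{a}hler property~\cite{KacGmi97} we may assume, after shrinking $\mathcal{B}$, a continuously varying transverse K\"{a}hler metric. Positivity of the Chern curvature of a fixed Hermitian metric is an open condition: the curvature of the positive metric on $(K^T_0)^{-1}$ supplied by the hypothesis, recomputed against the moving complex structure $\ol{J}_t$, depends continuously on $t$ and so stays positive on a neighborhood $V$ of zero. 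Applying the transversal vanishing theorem fiberwise over $V$ then gives $H^{0,2}_{b,t}=0$ for every $t\in V$.

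The main obstacle is precisely this openness step, since the basic Dolbeault groups are only upper semicontinuous in $t$ while the complex structure $\ol{J}_t$ is itself moving; one must be sure that the positive-curvature metric, and with it the hypothesis of the vanishing theorem, survives uniformly rather than degenerates near $t=0$. This is exactly what the openness of positivity, combined with the continuity of the family $\ol{J}_t$ and its transverse K\"{a}hler data over the pinned-down smooth foliation, secures. With the vanishing established on all of $V$, the $(0,2)$-component of the Euler class is forced to be zero for each $t\in V$, so the deformation restricted to $V$ is of $(1,1)$-type.
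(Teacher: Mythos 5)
Your proof is correct and follows essentially the same route as the paper: the paper likewise deduces the $(1,1)$-type condition from the vanishing $H^{0,2}_{b,t}(M/\mathscr{F}_\xi)=0$, obtained by applying a transversal Kodaira--Nakano vanishing theorem to the positive transverse anticanonical bundle (this is the content of the remark following the proposition and of Corollary 1.4 of \cite{Noz11}, to which the paper defers). The details you supply---the basic $dd^c$-lemma converting the cohomological hypothesis into a positively curved metric, the identification $H^{0,q}_b\cong H^{m,q}_b\bigl((K^T)^{-1}\bigr)$, and the openness of curvature positivity in $t$ over the fixed smooth foliation---are exactly the steps the paper leaves implicit.
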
 
\begin{rmk}
In particular, the proposition is applicable if $(g,\eta,\xi,\Phi)$ is Sasaki-Einstein or more generally $\Ric^T >0$.
But in order to apply Kodaira-Nakano vanishing to prove $H^{0,2}_{b,t}(M/\mathscr{F}_\xi)$ for $t\in V$ we only
need that the transverse anti-canonical bundle $\bigwedge ^{m,0} \nu(\mathscr{F}_\xi)$ is positive.
\end{rmk}

\begin{xpl}
Let $Z_a=\C^2/\Lambda$ be the complex torus given by the lattice $\Lambda =\Z\{\lambda_1,\ldots,\lambda_4\}\subset\C^2$ with
$\lambda_1 =(1,0),\lambda_2 =(0,1),\lambda_3 =(i,0),\lambda_4 =(a,i)$ where $a\in\C$.  Let $x_1,\ldots, x_4$ be dual real
coordinates to the lattice vectors $\lambda_1,\ldots,\lambda_4$.  Then 
\[ \omega =dx_1 \wedge dx_3 + dx_2 \wedge dx_4 \]
is integral, $[\omega]\in H^2(Z,\Z)$, and so defines a smooth $S^1$ bundle $\mathbf{L}$ with total space $M$.  
Let $z_1,z_2$ be the standard holomorphic coordinates on $\C^2$.  Then a routine calculation gives
\begin{equation*}
\omega =\frac{i}{2} dz_1 \wedge d\ol{z}_1 +\frac{i}{2}dz_2 \wedge d\ol{z}_2 +\frac{a}{4} d\ol{z}_1 \wedge dz_2
+\frac{\ol{a}}{4} dz_1 \wedge d\ol{z}_2 -\frac{\ol{a}}{4} dz_1 \wedge dz_2 -\frac{a}{4} d\ol{z}_1 \wedge d\ol{z}_2
\end{equation*}
and $[\omega^{0,2}]=-\frac{a}{4}[d\ol{z}_1 \wedge d\ol{z}_2]$ is nonzero in $H^{0,2}(Z)$ for $a\neq 0$.
When $a=0$ as a $\C$-bundle $\mathbf{L}$ has a natural holomorphic structure and polarizes $Z_0$, and $M$ has a natural
Sasaki structure with transversal K\"{a}hler form $\omega$ and Reeb foliation $\mathscr{F}_\xi$ given by the 
$S^1$ bundle $\mathbf{L}$ with leaf space $Z_0$. 
But for $a\neq 0$ there is no complex structure on the $\C$-bundle $\mathbf{L}$ and no compatible Sasaki structure on $M$ with
$\mathscr{F}_\xi$ given by $\mathbf{L}$ with leaf space $Z_a$.

In fact, one can prove that for $a\in\C\setminus\Q +i\Q$ there is no integral nondegenerate $(1,1)$-form on $Z_a$.
Thus $Z_a$ is not algebraic for $a\in\C\setminus\Q +i\Q$.
\end{xpl}

If we have a $G$-equivariant deformation of $(1,1)$-type $(\mathscr{F}_\xi,\ol{J}_t),\ t\in\mathcal{B}\subseteq\mathcal{V}^G$,
then the family $(g_t,\eta_t,\xi,\Phi_t),\ t\in\mathcal{B}$ of Theorem~\ref{thm:Euler-obst} can be taken to be $G$-invariant
by averaging $\eta_t$ by the $G$-action.  In the following we will assume the deformed structures $(g_t,\eta_t,\xi,\Phi_t)$
are $G$-invariant.

\pagebreak

\subsection{Sasaki cone}

\subsubsection{Deforming the Reeb vector field in the Sasaki cone}\label{subsubsec:Sasak-cone}
We have a $G$-invariant Sasaki structure $(g,\eta,\xi,\Phi)$, where $G$ has Lie algebra $\mathfrak{g}$ with center
$\mathfrak{z}$ with $\xi\in\mathfrak{z}$.
\begin{defn}
We define the \emph{Sasaki cone} of $\mathfrak{z}\subseteq\aut(g,\eta,\xi,\Phi)$ to be
$\mathfrak{z}^+ =\{\zeta\in\mathfrak{z} :\ \eta(\zeta)>0\}$, which is clearly open in $\mathfrak{z}$.
\end{defn}

If $\zeta\in\mathfrak{z}^+$, then $\eta_\zeta =\eta(\zeta)^{-1}\eta$ is a contact form
for $D =\ker\eta$ with Reeb vector field $\zeta$.  It follows from Proposition~\ref{prop:CR-Sasaki} that
$(g_\zeta, \eta_\zeta, \zeta, \Phi_\zeta)$ is a Sasaki structure with the same underlying CR structure,
where $\Phi_\zeta (X) =\Phi(X)-\eta_\zeta(X)\Phi(\zeta)$ and $g_\zeta$ is defined in (\ref{eq:metric}).

Let $T^r \subseteq G$ be the connected component of the identity of the center.
We get an alternative description of $\mathfrak{z}^+$ if we consider the moment map for the Hamiltonian action on the cone
$C(M)=\R_{>0}\times M$.  In fact the moment map for the symplectic action of $T^r$ on $C(M)$ is given in terms of the contact form by
\begin{equation} \label{eq:moment-map2}
 \mu_{\eta} :C(M) \rightarrow\mathfrak{z}^*,
\end{equation}
where $\mu_{\eta}(x,r)(X) = r^2\eta_x(X)$ with $X\in\mathfrak{z}$ also denoting the vector field induced on $C(M)$.
The image of (\ref{eq:moment-map2}) is a strongly convex rational polyhedral cone $\mathcal{C}_{\mathfrak{z}}^*\subset\mathfrak{z}^*$ (\cite{MorTom97}).
Although the map $\mu_\eta$ depends on the contact form $\eta$, the image $\mathcal{C}_{\mathfrak{z}}^*$ is independent of
transversal K\"{a}hler deformations, considered in Lemma~\ref{lem:trans-def}, and the deformations of the Reeb vector field
$\xi\in\mathfrak{z}^+$ considered above.

By Farkas' theorem, the dual cone $\mathcal{C}_{\mathfrak{z}}$ to $\mathcal{C}_{\mathfrak{z}}^*$ is also a strongly convex polyhedral cone.  From the definition of $\mu_\eta$ we see that
\begin{equation}
\mathfrak{z}^+ =\overset{\circ}{\mathcal{C}_{\mathfrak{z}}}.
\end{equation}

We will consider deformations of the Sasaki structures $(g_t,\eta_t,\xi,\Phi_t)\in\mathcal{S}(\xi,\ol{J}_t),\ t\in\mathcal{B}$
of the previous section.  Given $\phi\in L^2_{k,G}(M)$, with $k>m+5$, and $\xi_\alpha =\xi+\alpha\in\mathfrak{z}^+$ we consider the Sasaki structure
$(g_{t,\alpha,\phi},\eta_{t,\alpha,\phi},\xi_\alpha,\Phi_{t,\alpha,\phi})\in\mathcal{S}(\xi_\alpha,\ol{J}_{t})$ with
\begin{align}
\eta_{t,\alpha,\phi} & =\eta_{t,\xi_\alpha} +d^c \phi, \label{eq:Sasaki-var1}\\
\Phi_{t,\alpha,\phi} & =\Phi_{t,\xi_\alpha} -(\xi_\alpha\otimes(\eta_{t,\xi_\alpha,\phi} -\eta_{t,\xi_\alpha})\circ\Phi_{t,\xi_\alpha},\label{eq:Sasaki-var2}
\end{align}
and $g_{t,\alpha,\phi}$ defined from (\ref{eq:Sasaki-var1}) and (\ref{eq:Sasaki-var2}) as in (\ref{eq:metric}).
Therefore we have a space of Sasaki structures parametrized by $(t,\alpha,\phi)\in\mathcal{B}\times\mathfrak{z}\times L^2_{k,G}(M)$,
in a neighborhood of zero.
The restriction $k>m+5$ ensures that the curvature tensors of $g_{t,\alpha,\phi}$ are well defined.

\subsubsection{Nondegeneracy of the relative Futaki invariant}

As the notation in (\ref{eq:rel-Futaki}) suggests the dependence of $\mathcal{F}_{G,\xi_\alpha}$ on
$\xi_\alpha =\xi+\alpha\in\mathfrak{z}^+$ will be important.  In a following section we will compute the derivative
\[ D_g \mathcal{F}_{G,\xi +t\alpha} (\alpha)= \frac{d}{dt} \mathcal{F}_{G,\xi +t\alpha}|_{t=0}.\]

Note that one must be careful that as $\xi_\alpha$ varies in $\mathfrak{z}^+$ one cannot assume that $\hol^T(\xi_\alpha, \ol{J})$
is unchanged because we are changing the foliation.  We assume that our starting structure $(g,\eta,\xi,\Phi)$ has
$G'=\Aut(g,\eta,\xi,\Phi)_0$ a maximal compact subgroup of $\Fol(\mathscr{F}_\xi,\ol{J})$.
We restrict $\mathcal{F}_{G,\xi_\alpha}$ to $\mathfrak{p}/\mathfrak{g}$ and differentiate with respect to $\alpha\in\mathfrak{z}$
at $\alpha=0$ to get
\begin{equation}\label{eq:rel-Futaki-der}
D_g \mathcal{F}_{G,\xi}: \mathfrak{p}/\mathfrak{g}\cong\mathfrak{z}'/\mathfrak{z} \rightarrow\mathfrak{z}^*.
\end{equation}
\begin{defn}
The Futaki invariant relative to $G$ is said to be $G'$-nondegenerate if (\ref{eq:rel-Futaki-der}) is injective.
\end{defn}

\section{Proof of main theorem}

The proof of the main theorem will depend on variation formulae as we vary the Sasaki structure as in (\ref{eq:Sasaki-var1}).

For $X\in\hol^T(\xi,\ol{J})^{1,0}_0$ the normalized potential $f_X \in\mathcal{H}^b_g$ can be written in terms of the Green's
function $G_g$
\[ f_X =-\sqrt{-1}G_g\bigl(\ol{\partial}_b^*(X\contr d\eta)\bigr),\]
though this will not be used.

\subsection{Variation formulae}

We will consider the first order variations of holomorphy potentials and the reduced scalar curvature with respect to
varying the Sasaki structure in (\ref{eq:Sasaki-var1}), more precisely, the derivative with respect to $\phi\in L^2_{k,G}(M)$
or $\alpha\in\mathfrak{z}$.  In the following $D_g$ will denote the derivative with respect to variations of the Sasaki structure
at $(g,\eta,\xi,\Phi)$.  The proof of the first lemma is easy.

\begin{lem}
Let $X\in\hol^T(\xi,\ol{J})^{1,0}_0$ with holomorphy potential $f_X =u_x +\sqrt{-1}v_X$.  Then
\[ D_g f_X (\phi) =X\phi, \]
and if $X=V-\sqrt{-1}\ol{J}V$ real components are $D_g u_X(\phi) = V\phi,\ D_g v_X (\phi)=-\ol{J}V\phi$.

If $\ol{X}=\ol{V}-\sqrt{-1}\ol{J}\ol{V}$ with $V\in\mathfrak{g}'$ and $\alpha\in C_{\mathfrak{g}'}(X)$,
then a non-normalized holomorphy potential
is $f_X =\sqrt{-1}\eta(V)$.  And
\[ D_g f_X (\alpha)=-\sqrt{-1}\eta(\alpha)\eta(V).\]
\end{lem}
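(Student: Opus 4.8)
The plan is to prove the two assertions separately, each reducing to a short differentiation once the holomorphy potential is characterised appropriately. For the $\phi$-variation I would work at the level of $\ol{\partial}_b$, and for the $\alpha$-variation I would use the explicit potential $f_X=\sqrt{-1}\eta(V)$ supplied by Lemma~\ref{lem:pot-kill} together with the rescaling of the contact form in the Sasaki cone.

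For the first assertion, note that the deformation $(\ref{eq:Sasaki-var1})$ with $\alpha=0$ is a transverse K\"ahler deformation $\omega^T_s =\omega^T +\tfrac{s}{2}dd^c\phi$ keeping $\ol{J}$ and the smooth foliation fixed, so $X$ stays in $\hol^T(\xi,\ol{J})^{1,0}_0$ for all $s$ and only its potential varies. Since $X=\partial^{\#}_{g_s} f_X^{(s)}$ means $g^T_s(X,\cdot)=\ol{\partial}_b f_X^{(s)}$, and $\omega^T_s(A,B)=g^T_s(\ol{J}A,B)$ with $\ol{J}X=\sqrt{-1}X$, for every $s$ one has
\[ X\contr\omega^T_s =\sqrt{-1}\,\ol{\partial}_b f_X^{(s)}. \]
Differentiating at $s=0$ and using $dd^c\phi =2\sqrt{-1}\,\partial_b\ol{\partial}_b\phi$ reduces the claim to the identity $X\contr\partial_b\ol{\partial}_b\phi =\ol{\partial}_b(X\phi)$. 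I would obtain this from Cartan's formula: because $X\contr\ol{\partial}_b\phi=0$ one gets $\mathcal{L}_X\ol{\partial}_b\phi =X\contr\partial_b\ol{\partial}_b\phi$, while transverse holomorphicity of $X$ gives $\mathcal{L}_X\ol{\partial}_b\phi =\ol{\partial}_b(\mathcal{L}_X\phi)=\ol{\partial}_b(X\phi)$. Hence $\ol{\partial}_b\bigl(D_g f_X(\phi)\bigr)=\ol{\partial}_b(X\phi)$, so $D_g f_X(\phi)=X\phi$ after fixing the additive constant. Splitting $X=V-\sqrt{-1}\ol{J}V$ and $f_X=u_X+\sqrt{-1}v_X$ into real and imaginary parts then yields $D_g u_X(\phi)=V\phi$ and $D_g v_X(\phi)=-\ol{J}V\phi$.

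For the second assertion I would instead invoke Lemma~\ref{lem:pot-kill}: if $V\in\mathfrak{g}'$ lifts $\ol{X}=\ol{V}-\sqrt{-1}\ol{J}\ol{V}$, then $f_X=\sqrt{-1}\eta(V)$ is a non-normalised holomorphy potential. Deforming the Reeb field along $\xi+t\alpha$ with $\alpha\in C_{\mathfrak{g}'}(X)$, the centralizer hypothesis (with $[\xi,V]=0$) ensures that $V$ commutes with $\xi+t\alpha$ and remains an infinitesimal automorphism of the rescaled structure, which has the same CR structure $(D,J)$ and contact form $\eta_{\xi+t\alpha}=\eta(\xi+t\alpha)^{-1}\eta=(1+t\,\eta(\alpha))^{-1}\eta$, using $\eta(\xi)=1$. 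Lemma~\ref{lem:pot-kill} applied to this structure gives $f_X^{(t\alpha)}=\sqrt{-1}\,(1+t\,\eta(\alpha))^{-1}\eta(V)$, and differentiating at $t=0$ yields directly $D_g f_X(\alpha)=-\sqrt{-1}\,\eta(\alpha)\eta(V)$.

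The only genuinely delicate point is the additive constant in the first part: the $\ol{\partial}_b$-identity determines $D_g f_X(\phi)$ only up to a constant, so I would pin it down using the normalisation convention for $\mathcal{H}^b_g$ (for instance the zero-average one from the Green's operator), checking that the variation of the normalisation matches the average of $X\phi$. Everything else is a routine contraction-and-Lie-derivative computation, consistent with the remark in the text that this lemma is easy.
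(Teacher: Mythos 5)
Your proof is correct. There is nothing in the paper to compare it against: the authors dismiss this statement with ``The proof of the first lemma is easy'' and give no argument, and your two computations are precisely the intended ones --- for the $\phi$-variation, reducing $D_g f_X(\phi)=X\phi$ to the identity $X\contr\partial_b\ol{\partial}_b\phi =\ol{\partial}_b(X\phi)$ for a transversely holomorphic $(1,0)$-field (your Cartan-formula derivation of this is sound, and works in local leaf charts where everything is honest K\"ahler geometry), and for the $\alpha$-variation, combining Lemma~\ref{lem:pot-kill} with the rescaling $\eta_{\xi+t\alpha}=(1+t\,\eta(\alpha))^{-1}\eta$, the centralizer hypothesis guaranteeing that $V$ remains an automorphism of each rescaled structure. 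Your caveat about the additive constant is also well placed: the formula, as stated in the paper, holds exactly for non-normalized potentials (or modulo constants), which is all that is used in the subsequent variation formulae.
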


\begin{lem}\label{lem:red-scal-var}
Let $(g,\eta,\xi,\Phi)$ be a $G$-invariant Sasaki structure.  The variation of $s^G_g$ in the direction $\phi\in L^2_{k,G}(M)$ is
\begin{equation}\label{eq:red-scal-def-phi}
 D_g s^G_g (\phi) =-2\mathbb{L}_g \phi +(d\phi,ds^G_g ).
\end{equation}
If $s^G_g =0$, then the variation of $s_g^G$ in the direction $\alpha\in C_{\mathfrak{g}'}(\mathfrak{g})$ is given by
\begin{equation}\label{eq:red-scal-def-alp}
 D_g s_g^G (\alpha)=(\mathbb{1} -\pi^G_g)\Bigl(\eta(\alpha)(2s_g -s_0 +2m) -2(m+1)\Delta_b \eta(\alpha)\Bigr)
\end{equation}
\end{lem}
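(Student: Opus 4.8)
The plan is to derive both identities from the \emph{non-reduced} linearization of the transverse scalar curvature together with a careful differentiation of the $G$-reduction. Writing $s_g^G=(\mathbb{1}-\pi_g^G)s_g=\pi_g^W s_g$, for any one-parameter variation of the Sasaki structure the product rule gives
\[ D_g s_g^G=(\mathbb{1}-\pi_g^G)\,D_g s_g-(D_g\pi_g^G)\,s_g. \]
The first summand is supplied by the Sasaki analogue of the classical K\"ahler formula, established in~\cite{BoyGalSim08},
\[ D_g s_g(\phi)=-2\mathbb{L}_g^b\phi+(d\phi,ds_g), \]
with $\mathbb{L}_g^b$ the operator defined above; the second summand records the motion of the $L^2$-orthogonal projection onto the finite-dimensional space $\sqrt{-1}\mathcal{H}_g^{\mathfrak{z}}$ of Killing potentials, and is where the real work lies.

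For the $\phi$-direction the key structural point is that $\mathbb{L}_g^b=(\nabla^- d)^*(\nabla^- d)$ is self-adjoint and annihilates every real holomorphy potential, so $\sqrt{-1}\mathcal{H}_g^{\mathfrak{z}}\subseteq\ker\mathbb{L}_g^b$ forces its image, restricted to $G$-invariant functions, into $W_{g,k}$. Hence $\pi_g^W\mathbb{L}_g^b=\mathbb{L}_g^b$ and $\pi_g^G\mathbb{L}_g^b=0$ on $L^2_{k,G}(M)$, and writing $\mathbb{L}_g$ for this restriction the second-order part already appears as $-2\mathbb{L}_g\phi$. It then remains to show that $\pi_g^W(d\phi,ds_g)$, corrected by the projection variation $-(D_g\pi_g^G(\phi))s_g$, collapses to $(d\phi,ds_g^G)$. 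For this I would use the explicit motion of the potential space: for $X\in\mathfrak{z}\subseteq\aut(g,\eta,\xi,\Phi)$ the Killing potential is $\eta(X)$, and under $\eta\mapsto\eta+d^c\phi$ it moves by $d^c\phi(X)=-(\Phi X)\phi$; differentiating the Gram description of $\pi_g^G$ with this data produces precisely the terms cancelling $(d\phi,d(\pi_g^G s_g))$ against the motion of the subspace, leaving $(d\phi,ds_g^G)$.

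For the $\alpha$-direction I would start from the first variation of the transverse scalar curvature as the Reeb field is deformed inside the Sasaki cone, $\xi\mapsto\xi+t\alpha$ with $\alpha\in C_{\mathfrak{g}'}(\mathfrak{g})$. The natural potential of this deformation is the basic function $\eta(\alpha)$, and the Boyer--Galicki--Simanca computation, carried out on the rescaled contact form $\eta_{\xi+t\alpha}=\eta(\xi+t\alpha)^{-1}\eta$ with $\eta(\xi+t\alpha)=1+t\eta(\alpha)$, yields
\[ D_g s_g(\alpha)=\eta(\alpha)(2s_g-s_0+2m)-2(m+1)\Delta_b\eta(\alpha). \]
Applying $(\mathbb{1}-\pi_g^G)$ then gives the asserted right-hand side, provided the projection-variation term drops out after projection. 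This is exactly where the hypothesis $s_g^G=0$ is used: then $s_g=\pi_g^G s_g$ is itself a Killing potential for the centre $\mathfrak{z}$, and since $\mathfrak{z}$ is preserved as $\xi$ moves within it, the first-order motion of $\pi_g^G s_g$ stays in $\sqrt{-1}\mathcal{H}_g^{\mathfrak{z}}$; combined with the idempotency relation $\pi_g^G(D_g\pi_g^G)\pi_g^G=0$, this is what allows the correction to be discarded, leaving precisely $(\mathbb{1}-\pi_g^G)$ of the bracket.

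The main obstacle in both parts is controlling the derivative of the $L^2$-orthogonal projection $\pi_g^G$ onto the \emph{moving} space $\sqrt{-1}\mathcal{H}_g^{\mathfrak{z}}$: the subspace of potentials, the Killing fields' potentials, and the $L^2$ inner product all vary with the structure, and in the $\alpha$-case even the foliation $\mathscr{F}_\xi$ and the transverse complex structure $\ol{J}$ move, so one must first verify that $\sqrt{-1}\mathcal{H}_g^{\mathfrak{z}}$ deforms smoothly and identify its first-order motion. Once that derivative is in hand, both identities reduce to the bookkeeping indicated above; I expect the clean cancellation in the $\phi$-case, and the precise role of $s_g^G=0$ in discarding the correction in the $\alpha$-case, to be the delicate points.
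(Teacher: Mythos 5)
Your treatment of the $\phi$-direction \eqref{eq:red-scal-def-phi} is essentially the paper's route: the paper defers exactly this point to Simanca~\cite{Sim05}, where the decomposition you describe --- the known linearization of $s_g$, the fact that the image of the self-adjoint operator $\mathbb{L}^b_g$ is orthogonal to its kernel, and the first-order transport of the Killing potentials $\eta(X)\mapsto\eta(X)+d^c\phi(X)$ fed into the Gram description of $\pi^G_g$ --- is what produces the cancellation you assert. As a sketch this part is acceptable.

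The $\alpha$-direction, however, contains a genuine structural error. You make two claims: (i) that the unreduced variation is $D_g s_g(\alpha)=\eta(\alpha)(2s_g-s_0+2m)-2(m+1)\Delta_b\eta(\alpha)$, and (ii) that the projection correction $-(D_g\pi^G_g)s_g$ drops out because the first-order motion of $\pi^G_g s_g$ stays inside $\sqrt{-1}\mathcal{H}^{\mathfrak{z}}_g$. Both are false, and they fail in compensating ways. For (i): the variation of the scalar curvature is a local computation and cannot contain the global average $s_0$; the correct formula, obtained in the paper by differentiating the transverse connection of $\eta_t=\eta(\xi+t\alpha)^{-1}\eta$ at fixed CR structure, is
\[ \frac{d}{dt}s_t\Big|_{t=0}=\eta(\alpha)(s_g+2m)-2(m+1)\Delta_b\eta(\alpha), \]
with $s_g$, not $2s_g-s_0$. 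For (ii): the subspace $\sqrt{-1}\mathcal{H}^{\mathfrak{z}}_{g_t}$ genuinely rotates out of itself, because the potential of $X\in\mathfrak{z}$ for the structure with Reeb field $\xi+t\alpha$ is $\eta_t(X)=\eta(\xi+t\alpha)^{-1}\eta(X)$, whose $t$-derivative $-\eta(\alpha)\eta(X)$ is a product of potentials and hence generically has a nonzero component in $W_{g,k}$. This is precisely where the hypothesis $s^G_g=0$ enters: it guarantees $s_g-s_0=\eta(X_1)$ for some $X_1\in\mathfrak{z}$, and the paper's Gram--Schmidt computation on the moving basis $\eta_t(X_j)$ then gives
\[ (\mathbb{1}-\pi^G_g)\Bigl(\tfrac{d}{dt}\pi^G_t\big|_{t=0}\Bigr)s_g=-(\mathbb{1}-\pi^G_g)\bigl(\eta(\alpha)(s_g-s_0)\bigr), \]
which is exactly the source of the $\eta(\alpha)(s_g-s_0)$ summand in \eqref{eq:red-scal-def-alp}. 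Your appeal to idempotency is backwards: differentiating $(\pi^G_t)^2=\pi^G_t$ yields $\pi^G_g\dot{\pi}^G\pi^G_g=0$, which (since $s_g=\pi^G_g s_g$) shows that the correction $\dot{\pi}^G s_g$ has \emph{no} component inside $\sqrt{-1}\mathcal{H}^{\mathfrak{z}}_g$ --- i.e.\ it lies entirely in $W_{g,k}$ and fully survives the outer projection $(\mathbb{1}-\pi^G_g)$ --- not that it vanishes. In short, the bracket in \eqref{eq:red-scal-def-alp} splits as $2s_g-s_0+2m=(s_g+2m)+(s_g-s_0)$, one term from the curvature variation and one from the rotation of the projection; your proposal attributes both to the former and none to the latter, so although the end formula agrees (it was given to you), neither intermediate claim is true and the derivation does not stand.
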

\begin{proof}
The formula (\ref{eq:red-scal-def-phi}) was proved in~\cite{Sim05}.  More precisely, it is proved that
\[ D_g (\pi^G s^G)(\phi) =\partial^{\#}_g (\pi_g^G s_g)\contr\partial\phi=\frac{1}{2}(d\phi,d(\pi_g^G s_g)) \]
from which (\ref{eq:red-scal-def-phi}) follows.

For (\ref{eq:red-scal-def-alp}) we consider the variation of Sasaki structures $\xi_t =\xi+t\alpha$ with
$\alpha\in C_{\mathfrak{g}'}(\xi)$, $\eta_t =\eta(\xi_t)^{-1}\eta$, and fixing the CR structure.
Let $f_t=\eta(\xi_t)^{-1}$, then $\omega^T_t|_D =f_t \omega^T|_D$, which is just a conformal change.
But the calculation of the variation of the curvature is more subtle as the foliation with respect to which
the transverse connection, ${}^t\nabla^T$, is defined is changing.  One calculates, with $X,Y\in\Gamma(D)$
\begin{equation}
\begin{split}
{}^t\nabla_X^T Y =\nabla^T_X Y & +\frac{1}{2}\bigl(d\log f(X)Y +d\log f(Y)X -g^T(X,Y)(d\log f)^{\#} \bigr) \\
										 & +f\bigl(g^T(\Phi(X),Y)\pi_\xi(\xi_t)-g(\xi_t,Y)\Phi(X)-g(\xi_t,X)\Phi(Y)\bigr),
\end{split}
\end{equation}
where $\pi_\xi :TM \rightarrow D$ is the orthogonal projection with respect to $g$ and $f=f_t$.  And
\begin{equation}\label{eq:trans-con-der}
\begin{split}
\dot{\nabla}^T_X Y =\frac{d}{dt}{}^t\nabla^T_X Y|_{t=0}
 			=& \nabla^T_X Y +\frac{1}{2}\bigl(d\dot{f}(X)Y +d\dot{f}(Y)X -g^T(X,Y)(d\dot{f})^{\#} \bigr) \\
						 & +g^T(\Phi(X),Y)\pi_\xi(\alpha)-g(\alpha,Y)\Phi(X)-g(\alpha,X)\Phi(Y)\bigr).
\end{split}
\end{equation}
Using (\ref{eq:trans-con-der}) one calculates, via a long but routine calculation the derivative at $t=0$ of the
transverse curvature $\dot{R}^T(X,Y)Z$.  And contracting the result gives
\begin{equation}
\begin{split}
\frac{d}{dt} s_t^T |_{t=0} & =-\dot{f}s_g^T +2(m+1)\Delta_b \dot{f} \\
						   & = \eta(\alpha)(s_g +2m) -2(m+1)\Delta_B \eta(\alpha).	
\end{split}
\end{equation}

It remains to differentiate $\pi_g^G$.  We let
$\pi^G_t :L^2_{k,G}(M)\rightarrow\sqrt{-1}\mathcal{H}_{g_t}^{\mathfrak{z}}\subset L^2_{k,G}(M)$ be the projection
defined by the above Sasaki structure with $\xi_t =\xi+t\alpha$ and fixed CR structure.
Since $s_g^G =(\mathbb{1}-\pi_g^G)s_g =0$, it is sufficient to compute $(\mathbb{1}-\pi_g^G)(\frac{d}{dt}\pi^G_t|_{t=0})s_g.$
We claim that
\begin{equation}\label{eq:proj-dif-claim}
(\mathbb{1}-\pi_g^G)(\frac{d}{dt}\pi^G_t|_{t=0})s_g =(\mathbb{1}-\pi_g^G)(-\eta(\alpha)(s_g -s_0)).
\end{equation}
We may assume that $\partial_g^{\#}s_g \not\equiv 0$, otherwise both sides vanish.
Let $\{X_0 =\xi, X_1,\ldots,X_r\}$ be a basis of $\mathfrak{z}$ with $X_1$ chosen so that
$\ol{J}\ol{X}_1+\sqrt{-1}\ol{X}_1 =\partial_g^{\#}s_g$ and $\eta(X_1)=s_g -s_0$.  Then
$p^0_t =1, p^1_t =\eta_t(X_1),\ldots, p^r_t =\eta_t(X_r)$ is a basis of $\mathcal{H}_{g_t}^{\mathfrak{z}}$ for
small $t$.  By the Gramm-Schmidt procedure, using the $L^2$ inner product induced by $g$, we obtain an orthonormal
basis $\{f^0_t,\ldots,f^r_t \}$ from $\{p_t^j\}$.  In terms of this basis we have
$\pi^G_t s_g =\sum_{j=0}^r \langle f^j_t,s\rangle_{L^2}f^j_t.$   Thus we have
\begin{equation}\label{eq:diff-proj}
(\mathbb{1}-\pi_g^G)(\frac{d}{dt}\pi^G_t|_{t=0})s_g =(\mathbb{1}-\pi_g^G)\sum_{j=0}^r \langle f^j_0,s\rangle_{L^2}\frac{d}{dt}f^j_t|_{t=0},
\end{equation}
because each $f^j_0$ is in the kernel of $(\mathbb{1}-\pi_g^G)$.

Note that only the $j=0,1$ terms in (\ref{eq:diff-proj}) are possibly non-trivial. 
We have $p^1_t =\eta(\xi +t\alpha)^{-1} \eta(X_1)=\eta(\xi +t\alpha)^{-1}(s_g -s_0)$, and
$f^0_t =(\Vol(g_t))^{-1/2}$.  So we have
\begin{equation}\label{eq:basis-ele}
 f^1_t =\frac{p^1_t -\langle p^1_t,f^0_t \rangle_{\sm L^2} f^0_t}{\| p^1_t -\langle p^1_t,f^0_t \rangle_{\sm L^2} f^0_t\|_{L^2}}.
\end{equation}
Since $\frac{d}{dt} f^0_t|_{t=0}$ is a constant function, (\ref{eq:diff-proj}) is
\[ (\mathbb{1}-\pi_g^G)(\frac{d}{dt}\pi^G_t|_{t=0})s_g =\|s_g -s_0 \|_{L^2} (\mathbb{1}-\pi_g^G)\frac{d}{dt}f^1_t|_{t=0}.\]
Again, because constant functions and $p^1_0$ are annihilated by $ (\mathbb{1}-\pi_g^G)$,
\[ \frac{d}{dt}f^1_t|_{t=0} =\frac{-\eta(\alpha)(s_g -s_0)}{\|s_g -s_0\|_{L^2}} \mod\quad\ker(\mathbb{1}-\pi_g^G),\]
and (\ref{eq:proj-dif-claim}) follows.
\end{proof}

Lemma~\ref{lem:red-scal-var} and (\ref{eq:rel-Futaki-form}) has the following consequence
\begin{prop}\label{prop:rel-Fut-form}
Suppose $(g,\eta,\xi,\Phi)$ be a $G$-invariant Sasaki structure satisfying $s^G_g =0$.  Then the derivative of the relative
Futaki invariant is
\[ D_g \mathcal{F}_{G,\xi,X}(\alpha) =\int_M v_X(\mathbb{1}-\pi_g^G)(\eta(\alpha)(2s_g -s_0 +2m) -2(m+1)\Delta_b \eta(\alpha))\, d\mu_g,  \]
where $X\in\mathfrak{z}'$ has potential $\sqrt{-1}v_X$ and $\alpha\in\mathfrak{z}$.
\end{prop}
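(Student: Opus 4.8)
The plan is to differentiate the closed-form expression for the relative Futaki invariant obtained in (\ref{eq:rel-Futaki-form}), namely $\mathcal{F}_{G,\xi}(X) = \int_M v_X\, s_g^G\, d\mu_g$, and to exploit the hypothesis $s_g^G = 0$ to annihilate all but one of the terms produced by the product rule. The only genuine input beyond this formula is Lemma~\ref{lem:red-scal-var}, which supplies the variation of the reduced scalar curvature as the Reeb field moves.

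First I would note that (\ref{eq:rel-Futaki-form}) holds for \emph{every} $G$-invariant Sasaki structure, so for the family with Reeb field $\xi_t = \xi + t\alpha$ and fixed CR structure, as set up in Section~\ref{subsubsec:Sasak-cone}, one has $\mathcal{F}_{G,\xi_t}(X) = \int_M v_{X,t}\, s_{g_t}^G\, d\mu_{g_t}$. Here $X \in \mathfrak{z}' = C_{\mathfrak{g}'}(\mathfrak{g})$ commutes with $\alpha \in \mathfrak{z} \subseteq \mathfrak{g}$, so $X$ remains a transversely holomorphic Killing field for each $\xi_t$ and the purely imaginary potential $\sqrt{-1}\,v_{X,t}$ is well defined; hence the formula applies uniformly along the family.

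Differentiating at $t=0$ by the product rule then yields three contributions: one from $\dot{v}_X$, one from $D_g s_g^G(\alpha)$, and one from the variation of the volume form $d\mu_{g_t}$. Since $s_g^G = 0$ at the base point, the first and third terms each carry a factor of $s_g^G$ and therefore vanish, leaving only
\[ D_g \mathcal{F}_{G,\xi,X}(\alpha) = \int_M v_X\, D_g s_g^G(\alpha)\, d\mu_g. \]
This cancellation is the conceptual crux of the argument, and I expect it to be the only point requiring care: it is precisely what makes the delicate variations $\dot{v}_X$ (which depend on the metric $g_t$ and on the changing projection $\pi_{g_t}^G$) and the volume variation irrelevant, so that no separate computation of those quantities is needed.

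Finally I would substitute formula (\ref{eq:red-scal-def-alp}) of Lemma~\ref{lem:red-scal-var} for $D_g s_g^G(\alpha)$, which is applicable because $\alpha \in \mathfrak{z} \subseteq C_{\mathfrak{g}'}(\mathfrak{g})$ and $s_g^G = 0$, to obtain
\[ D_g \mathcal{F}_{G,\xi,X}(\alpha) = \int_M v_X(\mathbb{1}-\pi_g^G)\bigl(\eta(\alpha)(2s_g - s_0 + 2m) - 2(m+1)\Delta_b \eta(\alpha)\bigr)\, d\mu_g, \]
which is the asserted identity. Thus, granting Lemma~\ref{lem:red-scal-var}, the proposition reduces to the observation that two of the three product-rule terms drop out under $s_g^G = 0$.
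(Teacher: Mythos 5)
Your proposal is correct and is precisely the argument the paper intends: the paper states the proposition as an immediate consequence of Lemma~\ref{lem:red-scal-var} and (\ref{eq:rel-Futaki-form}), i.e.\ differentiate $\mathcal{F}_{G,\xi_t}(X)=\int_M v_{X,t}\,s^G_{g_t}\,d\mu_{g_t}$ along $\xi_t=\xi+t\alpha$, kill the $\dot v_X$ and volume-form terms using $s^G_g=0$, and substitute (\ref{eq:red-scal-def-alp}). Your additional observations --- that $X\in\mathfrak{z}'$ remains a transversally holomorphic Killing field with potential along the whole family since it commutes with $\xi_t$, and that this is what makes the formula differentiable term by term --- are exactly the details the paper leaves implicit.
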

This has the following consequence.
\begin{cor}\label{cor:rel-Fut-SE}
Let $(g,\eta,\xi,\Phi)$ be a $G$-invariant Sasaki-Einstein structure.  Then if
$\mathfrak{p}/\mathfrak{g}\cong\mathfrak{z}'/\mathfrak{z}$ nonzero, the relative Futaki invariant is degenerate.
\end{cor}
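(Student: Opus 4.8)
The plan is to show that under the Sasaki--Einstein hypothesis the differential (\ref{eq:rel-Futaki-der}) vanishes identically; since its domain $\mathfrak{p}/\mathfrak{g}\cong\mathfrak{z}'/\mathfrak{z}$ is assumed nonzero, the zero map cannot be injective, and the relative Futaki invariant is therefore degenerate. The whole computation rests on Proposition~\ref{prop:rel-Fut-form}, which applies because a Sasaki--Einstein metric has $s_g^G=0$: indeed $\Ric_g=2m\,g$ forces $s_g$ to be constant, so $s_g\in\sqrt{-1}\mathcal{H}_g^{\mathfrak{z}}$.

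First I would record the simplifications special to the Einstein case. Since $s_g$ is constant it equals its average $s_0$, so the coefficient $2s_g-s_0+2m$ appearing in Proposition~\ref{prop:rel-Fut-form} collapses to the constant $s_g+2m=s^T$. Moreover $\Ric^T=(2m+2)g^T$ gives $\rho^T=(2m+2)\omega^T$ with $s^T$ constant, so $\partial s^T=0$. Hence, for $X\in\mathfrak{z}'$ with imaginary potential $\sqrt{-1}v_X$ and $\alpha\in\mathfrak{z}$,
\[ D_g\mathcal{F}_{G,\xi,X}(\alpha)=\int_M v_X\,(\mathbb{1}-\pi_g^G)\bigl(s^T\eta(\alpha)-2(m+1)\Delta_b\eta(\alpha)\bigr)\,d\mu_g, \]
so it suffices to prove that the argument of $(\mathbb{1}-\pi_g^G)$ lies in $\sqrt{-1}\mathcal{H}_g^{\mathfrak{z}}=\ker(\mathbb{1}-\pi_g^G)$.

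The term $s^T\eta(\alpha)$ is harmless: since $\alpha\in\mathfrak{z}\subseteq\aut(g,\eta,\xi,\Phi)$, the correspondence (\ref{eq:pot-Kill}) gives $\eta(\alpha)\in\sqrt{-1}\mathcal{H}_g^{\mathfrak{z}}$, and $s^T$ is constant. The crux --- and where I expect the real work --- is to show that $\Delta_b$ returns the holomorphy potential $\eta(\alpha)$ to $\sqrt{-1}\mathcal{H}_g^{\mathfrak{z}}$. For this I would exploit that in the Einstein case the holomorphy operator (\ref{eq:hol-oper}) collapses to a polynomial in $\Delta_b$ alone: with $\partial s^T=0$ and $\rho^T$ a constant multiple of $\omega^T$, the transverse K\"ahler identity makes $(\rho^T,dd^c\cdot)$ a constant multiple of $\Delta_b$, so that
\[ L_g^b=\tfrac14\bigl(\Delta_b^2-c\,\Delta_b\bigr)=\tfrac14\,\Delta_b\bigl(\Delta_b-c\bigr) \]
for a positive constant $c$. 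Because $\Delta_b$ is self-adjoint with discrete spectrum, the kernel $\mathcal{H}_g^b=\ker L_g^b$ of this quadratic polynomial is precisely the sum of the $\Delta_b$-eigenspaces for the eigenvalues $0$ and $c$; in particular $\mathcal{H}_g^b$ is $\Delta_b$-invariant. Decomposing $\eta(\alpha)=\bar v+(\eta(\alpha)-\bar v)$ into its constant part $\bar v$ (the average of $\eta(\alpha)$) and its $c$-eigenpart gives $\Delta_b\eta(\alpha)=c\,(\eta(\alpha)-\bar v)$, a linear combination of $\eta(\alpha)$ and the constant $1=\eta(\xi)$. Both are holomorphy potentials of elements of $\mathfrak{z}$ (recall $\xi\in\mathfrak{z}$), so $\Delta_b\eta(\alpha)\in\sqrt{-1}\mathcal{H}_g^{\mathfrak{z}}$.

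Combining the two observations, the integrand $(\mathbb{1}-\pi_g^G)\bigl(s^T\eta(\alpha)-2(m+1)\Delta_b\eta(\alpha)\bigr)$ vanishes for every $\alpha\in\mathfrak{z}$, whence $D_g\mathcal{F}_{G,\xi,X}(\alpha)=0$ for all $X\in\mathfrak{z}'$ and all $\alpha\in\mathfrak{z}$; that is, (\ref{eq:rel-Futaki-der}) is the zero map. As $\mathfrak{z}'/\mathfrak{z}\neq0$ by hypothesis, it fails to be injective and the relative Futaki invariant is degenerate, which is the transverse analogue of the K\"ahler--Einstein phenomenon of~\cite{LebSim94}. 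The only convention-sensitive input is the precise constant in the transverse K\"ahler identity, but it enters only through the positivity of $c$, so the structural conclusion --- that $L_g^b$ is a polynomial in $\Delta_b$ whose kernel is $\Delta_b$-invariant --- is unaffected.
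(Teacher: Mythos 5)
Your proof is correct, and its skeleton coincides with the paper's: both apply Proposition~\ref{prop:rel-Fut-form} and reduce the claim to showing that $\eta(\alpha)$ and $\Delta_b\eta(\alpha)$ lie in $\sqrt{-1}\mathcal{H}_g^{\mathfrak{z}}=\ker(\mathbb{1}-\pi_g^G)$, so that the integrand vanishes for every $\alpha\in\mathfrak{z}$ and $X\in\mathfrak{z}'$, making (\ref{eq:rel-Futaki-der}) the zero map. The only divergence is in how the key spectral fact is obtained. The paper invokes the Lichnerowicz--Matsushima theorem as a black box: the normalized Killing potentials $v_\alpha=\eta(\alpha)-\eta(\alpha)_0$ satisfy $\Delta_b v_\alpha=4(m+1)v_\alpha$, hence $\Delta_b\eta(\alpha)$ is again a combination of $\eta(\alpha)$ and constants. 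You instead derive this from (\ref{eq:hol-oper}): in the transversally K\"ahler--Einstein case $\partial s^T=0$ and $\rho^T=(2m+2)\omega^T$, so $L_g^b=\tfrac14\,\Delta_b(\Delta_b-c)$, whose kernel is the sum of the $0$- and $c$-eigenspaces of $\Delta_b$ and is therefore $\Delta_b$-invariant; applying this to $\eta(\alpha)\in\mathcal{H}_g^b$ gives $\Delta_b\eta(\alpha)=c\bigl(\eta(\alpha)-\eta(\alpha)_0\bigr)$. This factorization is in fact the standard proof of the Lichnerowicz--Matsushima eigenvalue identity, so your argument is a self-contained rendering of the paper's citation; what it buys is independence from the reference and, as you correctly note, insensitivity to the precise constant $c=4(m+1)$ (only the $\Delta_b$-invariance of $\ker L_g^b$ matters), at the cost of a few extra lines.
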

\begin{proof}
By the Lichnerowicz-Matsushima theorem the Killing potentials $\eta(\alpha)$, when normalized to have zero integral,
$v_\alpha =\eta(\alpha)-\eta(\alpha)_0$ satisfy $\Delta_b v_\alpha =4(m+1)v_\alpha$.  It is then easy to see that
the integrand in Proposition~\ref{prop:rel-Fut-form} vanishes.
\end{proof}

\subsection{Main theorem}

\subsubsection{Proof of main theorem}
A Sasaki metric $g_{t,\alpha,\phi}$ as in (\ref{eq:Sasaki-var1}) with $\phi\in L^2_{k+4,G}(M)$, $k>m+1$, is $G$-invariant.
We have the space of holomorphy potentials $\mathcal{H}^{\mathfrak{g}}_{t,\alpha,\phi}$ for $g_{t,\alpha,\phi}$, and the
subspace of $G$-invariant potentials $\mathcal{H}^{\mathfrak{z}}_{t,\alpha,\phi} \subseteq\mathcal{H}^{\mathfrak{g}}_{t,\alpha,\phi}$.
Using the metric $g_{t,\alpha,\phi}$ to define the $L^2$ inner product on $\phi\in L^2_{k,G}(M)$ we have the orthogonal
decomposition
\[ L^2_{k,G}(M)=\sqrt{-1}\mathcal{H}^{\mathfrak{z}}_{t,\alpha,\phi} \oplus W_{k,t,\alpha,\phi}, \]
and the projections
\[ \pi^G_{t,\alpha,\phi} :L^2_{k,G}(M)\rightarrow\sqrt{-1}\mathcal{H}^{\mathfrak{z}}_{t,\alpha,\phi},\text{  and  }
\pi^W_{t,\alpha,\phi} :L^2_{k,G}(M)\rightarrow W_{k,t,\alpha,\phi}. \]
The reduced scalar curvature of $g_{t,\alpha,\phi}$ is given by
\begin{equation}
s^G_{t,\alpha,\phi} =\pi^W_{t,\alpha,\phi}(s_{t,\alpha,\phi})=(\mathbb{1} -\pi^G_{t,\alpha,\phi})(s_{t,\alpha,\phi})
\end{equation}
We are looking for solutions of the equation
\begin{equation}
s^G_{t,\alpha,\phi} =0.
\end{equation}
For $(t,\alpha,\phi)\in\mathcal{U}\subset\mathcal{B}\times\mathfrak{z}\times L^2_{k+4,G}(M)$ with $\mathcal{U}$ a small enough
neighborhood of $(0,0,0)$ the associated Sasaki structure (\ref{eq:Sasaki-var1}) is well defined and
$\pi^W_{0} : W_{k,t,\alpha,\phi}\rightarrow W_{k,o}$ is an isomorphism.

Let $\mathcal{V}=\mathcal{U}\cap \mathcal{B}\times\mathfrak{z}\times W_{k+4,0}$.  Then we define a map
\begin{equation}\label{eq:def-map}
\begin{array}{rccc}
\mathscr{S}: & \mathcal{V} & \rightarrow &  \mathcal{B}\times W_{k,0}\\
			 & (t,\alpha,\phi) & \mapsto & \Bigl( t, \pi^W_{0}(s^G_{t,\alpha,\phi}) \Bigr)
\end{array}
\end{equation}

\begin{lem}\label{lem:def-map-der}
The map $\mathscr{S}$ is $C^1$ and its differential is Fredholm.  Assume that the Sasaki structure $(g,\eta,\xi,\Phi)$ at
$(0,0,0)\in\mathcal{V}$ has vanishing reduced scalar curvature $s^G_g =0$, then the differential of $\mathscr{S}$ at
$(t,\alpha,\phi)=(0,0,0)$ is
\[\left[
\begin{array}{ccc}
\mathbb{1} &    0 &  0    \\
*          & \multicolumn{2}{c}{\mathcal{S}_g^G}
\end{array}\right]\]
where
\begin{equation}\label{eq:def-map-der}
\begin{split}
\mathcal{S}_g^G (\dot{\alpha},\dot{\phi}) & = -2\mathbb{L}_g \dot{\phi}+\pi^W_{0}(\dot{s}^G_g)(\alpha)) \\
										  &	=-2\mathbb{L}_g \dot{\phi} +\pi^W_{0}\bigl(\eta(\alpha)(2s_g -s_0 +2m) -2(m+1)\Delta_b \eta(\alpha))\bigr).
\end{split}
\end{equation}
\end{lem}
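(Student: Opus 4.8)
The plan is to separate the two components of $\mathscr{S}$, to reduce the computation of the differential to the already-established Lemma~\ref{lem:red-scal-var}, and to obtain the $C^1$ and Fredholm statements from transversal elliptic theory. For differentiability, note that on $\mathcal{V}$ the Sasaki structure (\ref{eq:Sasaki-var1})--(\ref{eq:Sasaki-var2}) depends smoothly on $(t,\alpha,\phi)$ and its scalar curvature $s_{t,\alpha,\phi}$ is a second-order expression in the metric, hence fourth order in $\phi$; this accounts for the shift from $L^2_{k+4,G}(M)$ to $L^2_{k,G}(M)$, and the standing assumption $k>m+1$ makes both Sobolev spaces Banach algebras, so the nonlinear dependence, and therefore $\mathscr{S}$, is $C^1$. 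The projection $\pi^W_{t,\alpha,\phi}$ also varies smoothly, and on $\mathcal{V}$ the fixed map $\pi^W_0$ restricts to the isomorphism $W_{k,t,\alpha,\phi}\to W_{k,0}$ noted above.

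To identify the differential at $(0,0,0)$ I would treat the two rows separately. The first component of $\mathscr{S}$ is the projection $(t,\alpha,\phi)\mapsto t$, whose differential is $(\dot t,\dot\alpha,\dot\phi)\mapsto\dot t$; this is the top row $[\,\mathbb{1}\ 0\ 0\,]$. For the second component $F(t,\alpha,\phi)=\pi^W_0(s^G_{t,\alpha,\phi})$, the fixed bounded operator $\pi^W_0$ passes through the derivative, so $DF=\pi^W_0\circ D(s^G)$. The derivative in the $t$-direction plays no role in the subsequent implicit function argument and is recorded as $*$, and it remains to compute the $(\dot\alpha,\dot\phi)$-derivative, which is $\mathcal{S}_g^G$.

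For this I would apply Lemma~\ref{lem:red-scal-var}. In the $\dot\phi$-direction (\ref{eq:red-scal-def-phi}) gives $D_g s^G_g(\dot\phi)=-2\mathbb{L}_g\dot\phi+(d\dot\phi,ds^G_g)$, and the hypothesis $s^G_g=0$ kills the final term. Applying $\pi^W_0$ leaves $-2\mathbb{L}_g\dot\phi$ unchanged: $\mathbb{L}_g=(\nabla^- d)^*(\nabla^- d)$ is self-adjoint, and every $h\in\sqrt{-1}\mathcal{H}^{\mathfrak{z}}_g$ is a real basic holomorphy potential and hence lies in $\ker\mathbb{L}_g$ (the real basic solutions of $\mathbb{L}_g$ and of $L^b_g$ coincide), so $\langle\mathbb{L}_g\dot\phi,h\rangle=\langle\dot\phi,\mathbb{L}_g h\rangle=0$ and $\mathbb{L}_g\dot\phi\in W_{k,0}$. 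In the $\dot\alpha$-direction, formula (\ref{eq:red-scal-def-alp}) --- valid since $s^G_g=0$, and applicable because $\mathfrak{z}\subseteq C_{\mathfrak{g}'}(\mathfrak{g})$ --- is already of the form $(\mathbb{1}-\pi^G_g)(\cdots)=\pi^W_0(\cdots)$, on which the idempotent $\pi^W_0$ acts as the identity. Adding the two contributions yields (\ref{eq:def-map-der}).

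Finally, for the Fredholm property it suffices, after discarding the trivial top row, to show $\mathcal{S}_g^G:\mathfrak{z}\oplus W_{k+4,0}\to W_{k,0}$ is Fredholm. Since $\mathbb{L}_g$ kills the finite-dimensional $\sqrt{-1}\mathcal{H}^{\mathfrak{z}}_g$ and maps $W_{k+4,0}$ into $W_{k,0}$, its restriction $W_{k+4,0}\to W_{k,0}$ inherits the Fredholm property of the fourth-order transversally elliptic self-adjoint operator $\mathbb{L}_g:L^2_{k+4,G}(M)\to L^2_{k,G}(M)$; the remaining term is a finite-rank map out of the finite-dimensional $\mathfrak{z}$, so $\mathcal{S}_g^G$ is Fredholm. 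I expect the main obstacle to be this functional-analytic bookkeeping rather than the differential computation: one must verify that the composition of the smoothly varying projections $\pi^W_{t,\alpha,\phi}$ with the fixed $\pi^W_0$ is genuinely $C^1$ between the shifted Sobolev spaces, and that transversal elliptic theory for $\mathbb{L}_g$ on $G$-invariant basic functions really supplies Fredholmness on the subspaces $W$. Once the interplay between $\pi^W_0$ and $\mathbb{L}_g$ is observed, the form of the differential follows routinely from Lemma~\ref{lem:red-scal-var}.
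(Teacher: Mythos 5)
Your proposal is correct and follows essentially the same route as the paper: the differential is computed by applying Lemma~\ref{lem:red-scal-var} componentwise (with $s^G_g=0$ killing the $(d\dot\phi,ds^G_g)$ term), and the Fredholm property comes from viewing the differential as a finite-rank/compact perturbation of $-2\mathbb{L}_g$ on $W_{k+4,0}\to W_{k,0}$, which is exactly the argument the paper uses (there deferred to Proposition~\ref{prop:def-map-sub}). Your added verifications --- that $\mathbb{L}_g\dot\phi$ already lies in $W_{k,0}$ by self-adjointness of $\mathbb{L}_g$ and the identification of $\ker\mathbb{L}_g$ with real holomorphy potentials, and that $\pi^W_0$ acts as the identity on the image of $\mathbb{1}-\pi^G_g$ --- are details the paper leaves implicit, but they do not change the method.
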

\begin{proof}
Since the reduced scalar curvature $s^G_{t,\alpha,\phi}$ is $C^1$ in $(t,\alpha,\phi)$, the map $\mathscr{S}$ is $C^1$.
The formula for $\mathcal{S}_g^G (\dot{\alpha},\dot{\phi})$ follows from Lemma~\ref{lem:red-scal-var}.
\end{proof}

\begin{prop}\label{prop:def-map-sub}
Suppose that $(g,\eta,\xi,\Phi)$ at $(0,0,0)\in\mathcal{V}$ had vanishing reduced scalar curvature $s^G_g =0$, then
the Fr\'echet derivative of $\mathscr{S}$, defined in (\ref{eq:def-map}), at $(t,\alpha,\phi)=(0,0,0)$ has index $\dim\mathfrak{z}$ and
is an submersion if and only if the relative Futaki invariant $\mathcal{F}_{G,\xi}$ is nondegenerate at $\xi$.
\end{prop}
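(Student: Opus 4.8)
The plan is to read everything off the block form of the differential established in Lemma~\ref{lem:def-map-der}. Because the top-left entry is the identity $\mathbb{1}$ on the finite-dimensional factor $\mathcal{B}$ while the top-right entries vanish, $D\mathscr{S}$ is Fredholm, is surjective, and has a given index exactly when the operator
\[ \mathcal{S}_g^G : \mathfrak{z}\times W_{k+4,0}\longrightarrow W_{k,0},\qquad (\alpha,\phi)\mapsto -2\mathbb{L}_g\phi +\pi^W_0\bigl(\dot{s}^G_g(\alpha)\bigr) \]
of (\ref{eq:def-map-der}) does. So I would reduce the entire statement to computing the index and the cokernel of $\mathcal{S}_g^G$.

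For the index I would first treat the $\phi$-variable alone. The operator $\mathbb{L}_g$ is a real, formally self-adjoint, transversally elliptic operator of order four, so as a map $W_{k+4,0}\to W_{k,0}$ it is Fredholm of index zero; its image is the $L^2$-orthogonal complement of $\ker\mathbb{L}_g$, and since $\sqrt{-1}\mathcal{H}_g^{\mathfrak{z}}\subseteq\ker\mathbb{L}_g$ the operator indeed lands in $W_{k,0}$. Adjoining the finite-dimensional input $\mathfrak{z}$ through the bounded map $\alpha\mapsto\pi^W_0(\dot{s}^G_g(\alpha))$ raises the index by exactly $\dim\mathfrak{z}$, giving $\operatorname{index}\mathcal{S}_g^G=\dim\mathfrak{z}$, hence the same index for $D\mathscr{S}$.

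The surjectivity is where the Futaki invariant enters, and this is the main obstacle. By formal self-adjointness of $\mathbb{L}_g$, a class $w\in W_{k,0}$ annihilates the image of $\mathcal{S}_g^G$ iff $\langle\mathbb{L}_g w,\phi\rangle=0$ for all $\phi$ and $\langle w,\dot{s}^G_g(\alpha)\rangle=0$ for all $\alpha\in\mathfrak{z}$; the first condition says $w\in K_W:=\ker\mathbb{L}_g\cap W_{k,0}$. The crux is to identify $K_W$. I would argue that the real $G$-invariant solutions of $\mathbb{L}_g$ are exactly the real holomorphy potentials attached to $\mathfrak{z}'=C_{\mathfrak{g}'}(\mathfrak{g})$: a real basic $f$ with $\mathbb{L}_g f=0$ has $\grad f$ transversally real holomorphic, so $\ol{J}\grad f$ is $g^T$-Killing and lies in $\hol^T(\xi,\ol{J})_0$ by the discussion around (\ref{eq:pot-Kill}) and (\ref{eq:extr-decomp}); $G$-invariance then forces this Killing field into $C_{\mathfrak{g}'}(\mathfrak{g})=\mathfrak{z}'$, and no $\mathfrak{h}^\lambda$-summand can occur since those fields are not Killing. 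Thus the $G$-invariant real kernel of $\mathbb{L}_g$ is $\{v_X:X\in\mathfrak{z}'\}=\sqrt{-1}\mathcal{H}_g^{\mathfrak{z}'}$, and intersecting with $W_{k,0}=(\sqrt{-1}\mathcal{H}_g^{\mathfrak{z}})^{\perp}$ yields a canonical isomorphism $K_W\cong\mathfrak{z}'/\mathfrak{z}$.

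With $K_W$ identified, I would finish by recognizing the remaining pairing as the Futaki differential. For $w\in K_W$ corresponding to $[X]\in\mathfrak{z}'/\mathfrak{z}$ with $w=(\mathbb{1}-\pi^G_g)v_X$, the condition $\langle w,\dot{s}^G_g(\alpha)\rangle=0$ becomes $\langle v_X,\dot{s}^G_g(\alpha)\rangle=0$ because $\dot{s}^G_g(\alpha)\in W_{k,0}$; by Proposition~\ref{prop:rel-Fut-form} this pairing is exactly $D_g\mathcal{F}_{G,\xi,X}(\alpha)$. Hence the cokernel of $\mathcal{S}_g^G$ is isomorphic to the kernel of the map $D_g\mathcal{F}_{G,\xi}:\mathfrak{z}'/\mathfrak{z}\to\mathfrak{z}^*$ of (\ref{eq:rel-Futaki-der}). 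Therefore $\mathcal{S}_g^G$, equivalently $D\mathscr{S}$, is surjective iff $D_g\mathcal{F}_{G,\xi}$ is injective, i.e. iff the relative Futaki invariant is nondegenerate at $\xi$, which is the assertion.
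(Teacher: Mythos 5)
Your proof is correct and follows essentially the same route as the paper: compute the index by viewing $D_g\mathscr{S}$ as a finite-dimensional augmentation (equivalently, compact perturbation) of $-2\mathbb{L}_g$, which has index zero, then use self-adjointness of $\mathbb{L}_g$ to identify cokernel elements as real $G$-invariant potentials in $\sqrt{-1}\mathcal{H}^{\mathfrak{z}'}_g$, i.e.\ elements of $\mathfrak{z}'/\mathfrak{z}$, whose pairing against $\dot{s}^G_g(\alpha)$ is exactly $D_g\mathcal{F}_{G,\xi,X}(\alpha)$ by Proposition~\ref{prop:rel-Fut-form}. The only differences are cosmetic: you spell out the identification $\ker\mathbb{L}_g\cap W_{k,0}\cong\mathfrak{z}'/\mathfrak{z}$ in more detail and exhibit the cokernel as $\ker D_g\mathcal{F}_{G,\xi}$, which yields both directions of the equivalence at once, where the paper argues the submersion direction by contradiction.
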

\begin{proof}
Note that $D_g \mathscr{S}$ is a compact perturbation of
\[\mathcal{B}\times\mathfrak{z}\times W_{k+4,0} \ni(s,\dot{\alpha},\dot{\phi})\mapsto(s,-2\mathbb{L}_g \dot{\phi})\in \mathcal{B}\times W_{k,0}.\]
Since the index of $\mathbb{L}_g :W_{k+4,0} \rightarrow W_{k,0}$ is zero, the index of $D_g \mathscr{S}$ must be
$\dim\zeta$.

If $D_g \mathscr{S}$ is not surjective, there is a $\psi\in W_{k,0}$ in the cokernel.
We have from (\ref{eq:def-map-der})
\[ \langle\mathbb{L}_g \dot{\phi},\psi\rangle =0,\quad\text{and}\quad\langle\pi^W_{0}(\dot{s}^G_g)(\alpha)),\psi\rangle=0,\]
for all $\dot{\phi}\in W_{k+4,0}$.  The first equation implies $\psi\in\sqrt{-1}\mathcal{H}^{\mathfrak{z}'}_g$,
so $X=\ol{J}\grad\psi \in\mathfrak{z}'$.  Thus
\begin{equation}
\int_M \psi\pi^W_{0}(\dot{s}_g^G(\dot{\alpha}))\, d\mu_g =\int_M \psi\dot{s}_g^G(\dot{\alpha})\, d\mu_g =D_g \mathfrak{F}_{G,\xi,X}(\dot{\alpha}),
\end{equation}
where the second equality uses (\ref{eq:rel-Futaki-form}) and that $s_g^G =0$.
If $\mathcal{F}_{G,\xi}$ is nondegenerate, then this implies $X\in\mathfrak{z}$ and $\psi\in\sqrt{-1}\mathcal{H}^{\mathfrak{z}}_g$
contradicting $\psi\in W_{k,0}$.
\end{proof}

\begin{thm}\label{thm:mainthm}
Let $(\mathscr{F}_\xi,\ol{J}_t),\ t\in\mathcal{B}$, be a $G$-equivariant $(1,1)$-type deformation with $\mathcal{B}$ smooth and 
fixing the smooth structure of $\mathscr{F}_\xi$, where $G$ is a compact connected group with $\xi\in\mathfrak{g}$.
Suppose $(g,\eta,\xi,\Phi)\in\mathcal{S}(\xi,\ol{J}_0)$ has vanishing reduced scalar curvature $s^G_g =0$.
If the relative Futaki invariant $\mathcal{F}_{G,\xi}$ is nondegenerate at $\xi$, then there is a neighborhood $\mathcal{V}$
of $(0,0,0)\in\mathcal{B}\times\mathfrak{z}\times C_b^\infty(M)^G$ so that
\[ \mathcal{E} =\{ (t,\alpha, \phi)\in\mathcal{V} : (g_{t,\alpha,\phi},\eta_{t,\alpha,\phi},\xi+\alpha,\Phi_{t,\alpha,\phi})\text{  has } s^G_{t,\alpha,\phi} =0\}, \]
is a smooth manifold of dimension $\dim_R \mathcal{B} +\dim_R \mathfrak{z}$.

Furthermore, the map $\varpi:\mathcal{E}\rightarrow\mathcal{B},\ \varpi(t,\alpha,\phi)=t$ is a submersion with fibers of dimension
$\dim_R \mathfrak{z}$.  And any $(t,\alpha, \phi))\in\mathcal{E}$ has $\phi\in C_b^\infty(M)^G$.
\end{thm}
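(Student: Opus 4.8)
The plan is to realize $\mathcal{E}$ as the zero locus of the map $\sigma(t,\alpha,\phi)=\pi^W_{0}(s^G_{t,\alpha,\phi})$, the second component of $\mathscr{S}$ from (\ref{eq:def-map}), and to apply the submersion theorem in Banach spaces. Since $\pi^W_{0}:W_{k,t,\alpha,\phi}\to W_{k,0}$ is an isomorphism on $\mathcal{V}$, the condition $s^G_{t,\alpha,\phi}=0$ is equivalent to $\sigma(t,\alpha,\phi)=0$, so $\mathcal{E}=\mathscr{S}^{-1}(\mathcal{B}\times\{0\})=\sigma^{-1}(0)$. By Lemma~\ref{lem:def-map-der} the map $\mathscr{S}$ is $C^1$ with Fredholm differential, and in fact smooth, since $s_{t,\alpha,\phi}$ and the projections $\pi^G_{t,\alpha,\phi}$ depend smoothly on $(t,\alpha,\phi)$ in the chosen Sobolev spaces (here $L^2_k$ is a Banach algebra because $k>m+1$). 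By Proposition~\ref{prop:def-map-sub} the nondegeneracy of $\mathcal{F}_{G,\xi}$ forces $D_g\sigma$ to be surjective at the origin; as surjectivity of a continuous family of Fredholm operators is an open condition, after shrinking $\mathcal{V}$ we may assume $\sigma$ is a submersion throughout. The submersion theorem then exhibits $\mathcal{E}$ as a smooth submanifold with $T_{(0,0,0)}\mathcal{E}=\ker D_g\sigma$, where by (\ref{eq:def-map-der}) the differential is $D_g\sigma(s,\dot\alpha,\dot\phi)={*}s+\mathcal{S}_g^G(\dot\alpha,\dot\phi)$.

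For the dimension I would argue by the Fredholm index. The operator $D_g\sigma$ is a finite-rank (hence compact) perturbation of $(s,\dot\alpha,\dot\phi)\mapsto-2\mathbb{L}_g\dot\phi$: the contributions of the ${*}s$ term and of the $\eta(\dot\alpha)$ term land in the finite-dimensional images of $\mathcal{B}$ and $\mathfrak{z}$. As noted in the proof of Proposition~\ref{prop:def-map-sub}, $\mathbb{L}_g:W_{k+4,0}\to W_{k,0}$ is self-adjoint elliptic of index zero, so $D_g\sigma$ has index $\dim_{\R}\mathcal{B}+\dim\mathfrak{z}$. Being surjective, its kernel has exactly this dimension, whence $\dim\mathcal{E}=\dim_{\R}\mathcal{B}+\dim\mathfrak{z}$.

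To see that $\varpi:\mathcal{E}\to\mathcal{B}$ is a submersion I would check that $D\varpi$, the projection of $\ker D_g\sigma$ onto the $\mathcal{B}$-factor, is onto. Given $s\in\mathcal{B}$, the surjectivity of $\mathcal{S}_g^G$ supplied by Proposition~\ref{prop:def-map-sub} produces $(\dot\alpha,\dot\phi)$ with $\mathcal{S}_g^G(\dot\alpha,\dot\phi)=-{*}s$, so that $(s,\dot\alpha,\dot\phi)\in\ker D_g\sigma$ and projects to $s$. Hence $\varpi$ is a submersion and its fibers have dimension $\dim\mathcal{E}-\dim_{\R}\mathcal{B}=\dim\mathfrak{z}$. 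Equivalently, one may freeze $t$ and apply the implicit function theorem to $\mathcal{S}_g^G$ alone, whose kernel is $\dim\mathfrak{z}$-dimensional, realizing each fiber $\varpi^{-1}(t)$ as a smooth $\dim\mathfrak{z}$-manifold varying smoothly in $t$.

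Finally, the smoothness $\phi\in C^\infty_b(M)^G$ follows from elliptic regularity. On $\mathcal{E}$ the equation $s^G_{t,\alpha,\phi}=0$ asserts that $s_{t,\alpha,\phi}$ lies in $\sqrt{-1}\,\mathcal{H}^{\mathfrak{z}}_{t,\alpha,\phi}$, i.e.\ equals a holomorphy potential; expanded through (\ref{eq:Sasaki-var1}) this is a fourth-order transversally elliptic equation for $\phi$, whose coefficients depend smoothly on the smooth data $\ol{J}_t$ and on $\alpha$, and whose right-hand side is smooth because holomorphy potentials are smooth. A standard bootstrap then upgrades $\phi$ from $W_{k+4,0}$ to $C^\infty_b(M)^G$, and identifies the Sobolev submanifold cut out above with the smooth solution set of the theorem. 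I expect this regularity step, together with the verification that the linearization in $\phi$ is genuinely the elliptic operator $\mathbb{L}_g$, to be the only remaining analytic point; the manifold structure, the dimension count, and the submersion property of $\varpi$ are then formal consequences of Lemma~\ref{lem:def-map-der} and Proposition~\ref{prop:def-map-sub}.
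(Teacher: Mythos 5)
Your proposal is correct and takes essentially the same approach as the paper: both hinge on Lemma~\ref{lem:def-map-der} and Proposition~\ref{prop:def-map-sub}, and your appeal to the regular-value (submersion) theorem for the Fredholm map $\sigma(t,\alpha,\phi)=\pi^W_0(s^G_{t,\alpha,\phi})$ is the same inverse-function-theorem argument the paper runs explicitly on the augmented map $\mathscr{S}\times\pi$, with the identical index-zero count for $\mathbb{L}_g$ giving $\dim\mathcal{E}=\dim_{\R}\mathcal{B}+\dim\mathfrak{z}$ and the same tangent-space reasoning for the submersion $\varpi$. Your closing elliptic-bootstrap remark establishing $\phi\in C^\infty_b(M)^G$ is precisely what the paper delegates to the regularity theorem of~\cite{LebSim93} applied in a local foliation chart.
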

\begin{proof}
By Proposition~\ref{prop:def-map-sub} the map (\ref{eq:def-map-der}) is a submersion at $(0,0,0)$.
Let $K=\ker D_g \mathscr{S}\subset T_0\mathcal{B}\times\mathfrak{z}\times W_{k+4,0}$.  We identify $\mathcal{B}\subset T_0\mathcal{B}$
as a Euclidean space.  Let $\pi:\mathcal{V}=\mathcal{U}\cap \mathcal{B}\times\mathfrak{z}\times W_{k+4,0} \rightarrow K$ be
the orthogonal projection.  Then the differential at zero of
\[ \mathscr{S} \times\pi :\mathcal{V} \rightarrow \mathcal{B}\times W_{k,0} \times K,\]
is an isomorphism.  The inverse function theorem provides an inverse, and
$\mathcal{B}\times K\ni (t,s)\mapsto(\mathscr{S} \times\pi)^{-1}(t,0,s)$,
parametrizes $\mathcal{E}$.

$\varpi$ is a submersion because $S$ is orthogonal to $\mathcal{B}\times\{0\}\times\{0\}$.
If $(t,\alpha, \phi)\in\mathcal{E}$, then $g^T_{t,\alpha, \phi}$ is transversely extremal.  The regularity result
of~\cite{LebSim93}, applied in a local foliation chart, shows that $\phi\in C_b^\infty(M)^G$.
\end{proof}

\subsubsection{Maximal torus case}
The case in which $G=T^r \subseteq G'$ is a maximal torus gives a somewhat stronger result than in general.
Furthermore, it is easier to find examples, because the nondegeneracy of the Futaki invariant holds trivially.
In this section $G=T^r$ is a maximal torus in the maximal compact subgroup $G'\subset\Fol(M,\mathscr{F}_\xi,\ol{J})$.
Note that we have $\mathfrak{z}=\mathfrak{z}'=\mathfrak{g}$.

The proof of the following is obvious.
\begin{lem}
Suppose $G' \subset\Fol(M,\mathscr{F}_\xi,\ol{J})$ is maximal compact and $G=T^r \subseteq G'$ is a maximal torus.
Then $\mathfrak{p}/\mathfrak{g} =\mathfrak{z}'/\mathfrak{z} =0$.
\end{lem}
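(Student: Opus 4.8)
The plan is to exploit the fact that $G=T^r$ being a maximal torus forces $\mathfrak{g}$ to be a maximal abelian subalgebra of the compact Lie algebra $\mathfrak{g}'$, which is therefore self-centralizing. First I would observe that since $\mathfrak{g}$ is abelian, every element of $\mathfrak{g}$ commutes with all of $\mathfrak{g}$, so the center is all of $\mathfrak{g}$; that is, $\mathfrak{z}=Z(\mathfrak{g})=\mathfrak{g}$. This disposes of the left-hand factor of $\mathfrak{z}'/\mathfrak{z}$ immediately.

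Next I would show that $\mathfrak{z}'=\mathfrak{g}$ as well. Take any $X\in\mathfrak{z}'=C_{\mathfrak{g}'}(\mathfrak{g})$. Since $X$ commutes with every element of $\mathfrak{g}$ and $\mathfrak{g}$ is itself abelian, the subspace $\mathfrak{g}+\R X$ is again an abelian subalgebra of $\mathfrak{g}'$ containing $\mathfrak{g}$. But the Lie algebra of a maximal torus is a maximal abelian (Cartan) subalgebra of the compact Lie algebra $\mathfrak{g}'$, so $\mathfrak{g}$ admits no strictly larger abelian subalgebra; hence $\mathfrak{g}+\R X=\mathfrak{g}$ and $X\in\mathfrak{g}$. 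Therefore $\mathfrak{z}'=\mathfrak{g}=\mathfrak{z}$, giving $\mathfrak{z}'/\mathfrak{z}=0$.

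Finally, I would invoke the isomorphism $\mathfrak{p}/\mathfrak{g}\cong\mathfrak{z}'/\mathfrak{z}$ established in the earlier lemma to conclude that $\mathfrak{p}/\mathfrak{g}=0$ as well, completing the proof. There is essentially no obstacle here, which is why the statement is flagged as obvious: beyond elementary linear algebra, the only input is the standard structure-theoretic fact that a maximal torus in a compact connected Lie group is its own centralizer, equivalently that a maximal abelian subalgebra of a compact Lie algebra is self-centralizing. The one point worth stating carefully is the identification of the Lie algebra of the maximal torus $T^r$ with a maximal abelian subalgebra of $\mathfrak{g}'$, from which the maximality argument in the second step derives its force.
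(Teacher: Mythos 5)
Your proof is correct and follows exactly the route the paper intends: the paper declares the lemma obvious after noting $\mathfrak{z}=\mathfrak{z}'=\mathfrak{g}$ (the self-centralizing property of a maximal torus in the compact group $G'$), and then the identity $\mathfrak{p}/\mathfrak{g}\cong\mathfrak{z}'/\mathfrak{z}$ from the earlier lemma finishes it, just as you argue. Your write-up simply makes explicit the standard fact that the Lie algebra of a maximal torus is a maximal abelian subalgebra of $\mathfrak{g}'$, which is precisely the content the paper leaves unstated.
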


\begin{cor}\label{cor:maincor-tor}
Let $(\mathscr{F}_\xi,\ol{J}_t),\ t\in\mathcal{B}$, be a $G$-equivariant deformation of $(1,1)$-type with $\mathcal{B}$ smooth and fixing the smooth structure of $\mathscr{F}_\xi$, where $G=T^r$ is a maximal torus of $G'=\Aut(g,\eta,\xi,\Phi)_0$, and suppose
$(g,\eta,\xi,\Phi)\in\mathcal{S}(\xi,\ol{J}_0)$ has vanishing reduced scalar curvature $s^G_g =0$.
Then there is a neighborhood of zero
$\mathcal{V}\subset\mathfrak{B}\times\mathfrak{g}$ so that for $(t,\alpha)\in\mathcal{V}$ there is smooth Sasaki metric $g_{t,\alpha,\phi_{t,\alpha}}$ satisfying $s^G_{g_{t,\alpha,\phi_{t,\alpha}}} =0$.  So that for each fixed $t\in\mathcal{B}$
close to zero, the space of extremal metrics is parametrized by a neighborhood of zero in $\mathfrak{g}$.
\end{cor}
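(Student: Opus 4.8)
The plan is to deduce the statement from the implicit function theorem, the maximal-torus hypothesis being exactly what collapses the obstruction that the relative Futaki invariant was introduced to control. First I would record the Lie-theoretic input: since $G=T^r$ is a torus, $\mathfrak{z}=Z(\mathfrak{g})=\mathfrak{g}$, and since $G$ is a \emph{maximal} torus in the compact group $G'$, the centralizer of a maximal torus equals the torus itself, so $\mathfrak{z}'=C_{\mathfrak{g}'}(\mathfrak{g})=\mathfrak{g}=\mathfrak{z}$ --- this is the content of the preceding lemma, $\mathfrak{z}'/\mathfrak{z}=0$. Thus $\mathcal{F}_{G,\xi}$ is vacuously nondegenerate and Theorem~\ref{thm:mainthm} already produces the solution manifold; the additional content of the corollary is that one may prescribe $(t,\alpha)$ freely and solve for $\phi$, which I would obtain directly. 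Concretely I would apply the implicit function theorem to the $C^1$ map
\[ \Psi:\mathcal{B}\times\mathfrak{g}\times W_{k+4,0}\longrightarrow W_{k,0},\qquad \Psi(t,\alpha,\phi)=\pi^W_0(s^G_{t,\alpha,\phi}), \]
which satisfies $\Psi(0,0,0)=\pi^W_0(s^G_g)=0$ because $s^G_g=0$. By Lemma~\ref{lem:red-scal-var} the partial derivative in $\phi$ at the origin is $\partial_\phi\Psi|_{(0,0,0)}=-2\mathbb{L}_g$, the term $(d\phi,ds^G_g)$ in (\ref{eq:red-scal-def-phi}) dropping out since $s^G_g=0$.

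The crucial step, and the only place the maximal-torus hypothesis really enters analytically, is to show that $-2\mathbb{L}_g:W_{k+4,0}\to W_{k,0}$ is an isomorphism. Since $\mathbb{L}_g$ is formally self-adjoint and transversely elliptic of order four, it is Fredholm of index zero on the $G$-invariant Sobolev spaces, so it suffices to identify its kernel. A function $f$ lies in $\ker\mathbb{L}_g$ precisely when $\grad f$ is transversely real holomorphic; by the lemma identifying the real kernels of $\mathbb{L}_g$ and $L^b_g$ together with the correspondence (\ref{eq:pot-Kill}), the $G$-invariant such $f$ are exactly the Killing potentials of the $G$-invariant fields in $\aut(g,\eta,\xi,\Phi)=\mathfrak{g}'$, that is, of $C_{\mathfrak{g}'}(\mathfrak{g})=\mathfrak{z}'$. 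Because $\mathfrak{z}'=\mathfrak{z}$ in the maximal-torus case, this kernel is exactly $\sqrt{-1}\mathcal{H}_g^{\mathfrak{z}}$, whose orthogonal complement is $W_{g,k}$ by the very definition of the decomposition $L^2_{k,G}(M)=\sqrt{-1}\mathcal{H}_g^{\mathfrak{z}}\oplus W_{g,k}$. Hence $\mathbb{L}_g$ restricts to an isomorphism $W_{k+4,0}\to W_{k,0}$. This is precisely the general-case failure of injectivity, of dimension $\dim(\mathfrak{z}'/\mathfrak{z})$, that the relative Futaki invariant is designed to repair through the $\alpha$-variation; here that defect simply vanishes.

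With the isomorphism in hand the implicit function theorem yields a unique $C^1$ family $(t,\alpha)\mapsto\phi_{t,\alpha}\in W_{k+4,0}$, defined on a neighborhood $\mathcal{V}\subset\mathcal{B}\times\mathfrak{g}$ of zero, solving $\pi^W_0(s^G_{t,\alpha,\phi_{t,\alpha}})=0$. To upgrade this to $s^G_{t,\alpha,\phi_{t,\alpha}}=0$ I would invoke that $\pi^W_0:W_{k,t,\alpha,\phi}\to W_{k,0}$ is an isomorphism for parameters near zero and that $s^G_{t,\alpha,\phi}\in W_{k,t,\alpha,\phi}$, so that the vanishing of its $\pi^W_0$-image forces $s^G_{t,\alpha,\phi_{t,\alpha}}$ itself to vanish. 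Finally, as in the proof of Theorem~\ref{thm:mainthm}, the transverse extremality of $g_{t,\alpha,\phi_{t,\alpha}}$ together with the regularity result of LeBrun--Simanca, read off in a local foliation chart, promotes each $\phi_{t,\alpha}$ to a smooth basic function, giving a genuine smooth Sasaki metric for every $(t,\alpha)\in\mathcal{V}$ and exhibiting, for each fixed small $t$, a space of extremal metrics parametrized by a neighborhood of zero in $\mathfrak{g}$. I expect the identification of $\ker\mathbb{L}_g$ with $\sqrt{-1}\mathcal{H}_g^{\mathfrak{z}}$ to be the one genuine point of the argument, since the remainder is a formal application of the implicit function theorem and the quoted stability and regularity statements.
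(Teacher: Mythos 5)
Your proposal is correct and follows essentially the same route as the paper: both proofs turn on the observation that for a maximal torus $\mathfrak{z}'=\mathfrak{z}=\mathfrak{g}$, so the $G$-invariant real kernel of $\mathbb{L}_g$ (identified via the real-holomorphy-potential correspondence with $\sqrt{-1}\mathcal{H}^{\mathfrak{z}'}_g$) collapses to $\sqrt{-1}\mathcal{H}^{\mathfrak{z}}_g$, making $\mathbb{L}_g:W_{k+4,0}\rightarrow W_{k,0}$ an isomorphism. The only difference is packaging — you solve for $\phi_{t,\alpha}$ by the implicit function theorem with $(t,\alpha)$ as parameters (and, to your credit, you make explicit the step that $\pi^W_0(s^G)=0$ forces $s^G=0$), whereas the paper shows $\ker D_g\mathscr{S}$ projects isomorphically onto $\mathfrak{g}$ and applies the inverse function theorem to $\mathscr{S}\times\varpi\circ\pi$; the two formulations are equivalent.
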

\begin{proof}
By the Lemma the relative Futaki invariant is nondegenerate.  As above, define $K=\ker D_g \mathscr{S}$.
Suppose $(0,0,\dot{\phi})\in K$, then by (\ref{eq:def-map-der}) we have $\mathbb{L}_g \dot{\phi} =0$.
So $\dot{\phi}\in\sqrt{-1}\mathcal{H}^{\mathfrak{z}'}_g =\mathcal{H}^{\mathfrak{z}}_g$, and $\dot{\phi}=0$ since
$\dot{\phi}\in W_{k+4,0}$.  Therefore the projection $\varpi: K\rightarrow \mathfrak{g},\ \varpi(t,\alpha,\phi)=\alpha$
is an isomorphism.  We consider the map
\[\mathscr{S} \times\varpi\circ\pi :\mathcal{V} \rightarrow \mathcal{B}\times W_{k,0} \times\mathfrak{g},\]
whose differential at zero is an isomorphism.  The proof then follows from the inverse function theorem as in
Theorem~\ref{thm:mainthm}.
\end{proof}

\subsection{Sasaki-Einstein case}\label{subsec:Sasaki-Ein}

\subsubsection{Necessary condition for a Sasaki-Einstein structure}
Because of Corollary~\ref{cor:rel-Fut-SE} we might as well assume $G=T^r$ and $T^r \subseteq G'$ is a maximal torus.
We recall the necessary condition for $(g,\eta,\xi,\Phi)$ to admit a transverse K\"{a}hler deformation to a Sasaki-Einstein
structure, or rather a structure which is transversally K\"{a}hler-Einstein, $\Ric^T =\tau g^T,\ \tau>0$.
The following necessary conditions are well known.  See~\cite{FutOnWan09} or~\cite{MarSpaYau08}.
\begin{prop}\label{prop:CY-cond}
The following conditions are equivalent.
\begin{thmlist}
\item  $\frac{\tau}{2\pi}[\omega^T] =c^b_1(\mathscr{F}_\xi)$ in $H^2_b(M/\mathscr{F}_\xi)$ for $\tau>0$. \label{prop:SE-item1}

\item  The class $c_1^b(\mathscr{F}_\xi)$ is represented by a positive $(1,1)$ basic form and $c_1(D)=0$. \label{prop:SE-item2}

\item  There exists a nowhere vanishing holomorphic $(m+1,0)$-form $\Omega\in\Gamma(\Lambda^{1,1} C(M))$ for which
$\mathcal{L}_{\xi} \Omega =\sqrt{-1}\frac{\tau}{2} \Omega$.\label{prop:SE-item3}
If $M$ is not simply connected, then we may have to take $\Omega$ to be multi-valued, or
$\Omega\in\Gamma(\Lambda^{1,1} C(M))^{\otimes\ell}$.
\end{thmlist}
\end{prop}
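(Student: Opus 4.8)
The plan is to prove the cycle (i) $\Rightarrow$ (ii) $\Rightarrow$ (iii) $\Rightarrow$ (i), handling the transverse statements (i) and (ii) with basic Hodge theory and linking them to the cone statement (iii) through the canonical bundle of $C(M)$. I would first dispose of (i) $\Leftrightarrow$ (ii), which is purely a statement in basic cohomology. Recall from Proposition~\ref{prop:obst-pos} that $c_1^b(\mathscr{F}_\xi) = \frac{1}{2\pi}[\rho^T]_b$ and that $\omega^T = \frac{1}{2}d\eta$, so $[\omega^T]_b$ maps to $0$ under the natural map $\iota\colon H^2_b(M/\mathscr{F}_\xi) \to H^2(M,\R)$, while $c_1^b(\mathscr{F}_\xi)$ maps to $c_1(\nu(\mathscr{F}_\xi)) = c_1(D)$ under the isomorphism $D \cong \nu(\mathscr{F}_\xi)$. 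The key tool is the Gysin-type long exact sequence of the Sasaki foliation,
\[ H^0_b \xrightarrow{\ \cup[d\eta]_b\ } H^2_b(M/\mathscr{F}_\xi) \xrightarrow{\ \iota\ } H^2(M,\R) \longrightarrow H^1_b, \]
whose exactness shows $\ker\iota = \R\cdot[d\eta]_b = \R\cdot[\omega^T]_b$, a line, since $[\omega^T]_b \neq 0$ (it is transversely symplectic) makes $\cup[d\eta]_b$ injective on $H^0_b=\R$.

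For (i) $\Rightarrow$ (ii), the form $\frac{\tau}{2\pi}\omega^T$ is a positive basic $(1,1)$-representative of $c_1^b$, and applying $\iota$ to (i) gives $c_1(D) = \frac{\tau}{2\pi}\iota[\omega^T]_b = 0$. For (ii) $\Rightarrow$ (i), $c_1(D)=0$ places $c_1^b$ in $\ker\iota = \R[\omega^T]_b$, so $c_1^b = a[\omega^T]_b$ for some $a\in\R$; choosing a positive basic representative $\gamma$ of $c_1^b$ and using that $\int_M \beta\wedge\eta$ descends to a pairing on basic cohomology (basic forms of degree $2m+1$ vanish, so basic-exact forms integrate to zero against $\eta$), one gets $\int_M \gamma\wedge(\omega^T)^{m-1}\wedge\eta = a\int_M (\omega^T)^m\wedge\eta$ with both integrals positive, forcing $a>0$; I then set $\tau = 2\pi a$.

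Next I would connect to the cone through $K_{C(M)} = \Lambda^{m+1,0}T^*C(M)$. The nowhere-vanishing holomorphic $(1,0)$-field $\xi + \sqrt{-1}\,r\partial_r$ trivializes one holomorphic direction, giving a smooth isomorphism $K_{C(M)}^{-1}|_M \cong \bigwedge^{m,0}\nu(\mathscr{F}_\xi)$, under which restriction identifies $c_1(K_{C(M)}^{-1})$ with $c_1(\nu(\mathscr{F}_\xi)) = c_1(D)$. Thus a section $\Omega$ as in (iii) holomorphically trivializes $K_{C(M)}$, forcing $c_1(D)=0$. Moreover $\Omega$ descends to a transverse holomorphic volume form trivializing the transverse anticanonical bundle, whose associated Hermitian curvature represents $\rho^T$; the weight condition $\mathcal{L}_\xi\Omega = \sqrt{-1}\frac{\tau}{2}\Omega$, equivalently $\mathcal{L}_{r\partial_r}\Omega = \frac{\tau}{2}\Omega$ by holomorphicity (since $\xi = I(r\partial_r)$ and $\mathcal{L}_{\bar Z}\Omega = 0$ for the antiholomorphic field $\bar Z$), pins the proportionality so that $[\rho^T]_b = \tau[\omega^T]_b$, which is (i). Conversely, given (ii), topological triviality of $K_{C(M)}$ from $c_1(D)=0$ upgrades to holomorphic triviality on the Kähler cone via the basic $dd^c$-lemma and the exponential sequence, and the cohomological identity (i) forces the homogeneity weight of the resulting $\Omega$ to be $\frac{\tau}{2}$.

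The main obstacle I anticipate is the bookkeeping in the cone step: tracking the weight of $\Omega$ under $r\partial_r$ and $\xi$ and matching it exactly to the transverse constant $\tau$, together with the passage from topological to holomorphic triviality of $K_{C(M)}$. This is precisely where the non-simply-connected caveat enters, since the trivializing section may exist only as a multivalued form, or as a section of a tensor power $\Lambda^{m+1,0}\bigl(T^*C(M)\bigr)^{\otimes\ell}$, exactly as permitted in the statement of (iii). By contrast, the transverse equivalence (i) $\Leftrightarrow$ (ii) should be routine once the Gysin sequence and transverse Hodge theory are in hand.
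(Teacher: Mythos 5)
Your transverse step (i) $\Leftrightarrow$ (ii) is exactly the paper's argument: the same Gysin sequence $0\to H^0_b(M/\mathscr{F}_\xi)\overset{\delta}{\longrightarrow}H^2_b(M/\mathscr{F}_\xi)\overset{\iota}{\longrightarrow}H^2(M,\R)$, the identification $\ker\iota=\R\,[d\eta]_b$, and the fact that $\iota$ sends $c_1^b(\mathscr{F}_\xi)$ to $c_1(D)$. Where the paper merely asserts that the proportionality constant is positive, you verify it by pairing with $(\omega^T)^{m-1}\wedge\eta$; that is a legitimate (indeed slightly more complete) substitute, and this half of your proposal is fine.

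The cone half is where there is a genuine gap, and it sits precisely in the step you deferred as ``bookkeeping.'' As written, your (iii) $\Rightarrow$ (i) is self-contradictory: $\Omega$ does \emph{not} descend to a basic holomorphic trivialization of the transverse canonical bundle, because the natural candidate $\frac{1}{2}(r\partial_r-\sqrt{-1}\xi)\contr\Omega$ restricted to $M$ inherits the $\mathcal{L}_\xi$-weight $\sqrt{-1}\,\tau/2$ and hence is not basic. If such a basic trivialization existed, the standard curvature identity would make $\rho^T$ basic $\partial\ol{\partial}$-exact, i.e.\ $c_1^b(\mathscr{F}_\xi)=0$, contradicting the conclusion $c_1^b(\mathscr{F}_\xi)=\frac{\tau}{2\pi}[\omega^T]_b\neq 0$ that you are trying to reach; the weight is not a correction to this picture, it \emph{is} the entire content of the step. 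The paper resolves this by never leaving the cone: a $D$-homothety first normalizes $\tau=2m+2$, so that $\Omega\wedge\ol{\Omega}$ and $\omega^{m+1}$ have the same homogeneity under $r\partial_r$; then the potential $h$ defined by $\bigl(\tfrac{\sqrt{-1}}{2}\bigr)^{m+1}(-1)^{m(m+1)/2}\,\Omega\wedge\ol{\Omega}=e^h\,\tfrac{1}{(m+1)!}\,\omega^{m+1}$ satisfies $\mathcal{L}_\xi h=\mathcal{L}_{r\partial_r}h=0$ (this is exactly what fails for general $\tau$, where $h$ acquires a $(\tau-2m-2)\log r$ term), hence $h$ is basic and $\rho=\sqrt{-1}\,\partial\ol{\partial}h=\rho^T-(2m+2)\omega^T$ yields (i). For (i) $\Rightarrow$ (iii) the paper likewise does not argue through the exponential sequence: it solves $\sqrt{-1}\,\partial\ol{\partial}h=\rho^T-(2m+2)\omega^T$ by the transverse $\partial\ol{\partial}$-lemma, observes that the resulting Hermitian metric $\|\cdot\|_h$ on $\Lambda^{m+1,0}C(M)$ has flat Chern connection, and takes a parallel (hence holomorphic, and of the correct weight) section on the universal cover --- this is the concrete mechanism that produces $\Omega$, and it is the exact point where the multivalued/tensor-power caveat enters, as you anticipated. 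So while your overall architecture (Gysin sequence plus the canonical bundle of the cone) coincides with the paper's, the two computations you postponed are the actual substance of the cone equivalence, and the intermediate claim quoted above must be repaired along the paper's lines before your outline becomes a proof.
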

\begin{rmk}
Note that the conditions imply $\pi_1(M)$ must be finite.  The transverse Calabi-Yau theorem~\cite{Kac90} implies the existence
of a transverse K\"{a}hler deformation to transversal metric with $\Ric^T >0$.  After a possible homothety, this lifts to a Sasaki structure with $\Ric_g >0$, and the claim follows from Myers' Theorem.
\end{rmk}
\begin{proof}
In order to prove the equivalence of (\ref{prop:SE-item1}) and (\ref{prop:SE-item2}) consider
the Gysin sequence~\cite[Ch. 7]{BoyGal08}
\begin{equation}\label{eq:Gysin-seq}
 0\rightarrow H^0_b(M/\mathscr{F}_\xi)\overset{\delta}{\longrightarrow}H^{2}_b(M/\mathscr{F}_\xi)\overset{\iota}{\longrightarrow}H^2(M,\R)
\rightarrow\cdots,
\end{equation}
where $\delta\alpha =[\alpha d\eta]_b$.  If we have (\ref{prop:SE-item1}), then $\iota(c^b_1(\mathscr{F}_\xi))=0$.  But this 
represents $c_2(D)$.  If (\ref{prop:SE-item2}) holds, then again $\iota(c^b_1(\mathscr{F}_\xi))=0$, so
there exists an $\alpha\in\R$ with $\delta(\alpha)=2\alpha[\omega^T]=c^b_1(\mathscr{F}_\xi$.  But by assumption we must have
$\alpha>0$.

Supposing (\ref{prop:SE-item3}) we have
\begin{equation}\label{eq:hol-form}
\Bigl(\frac{\sqrt{-1}}{2}\Bigr)^{m+1} (-1)^{m(m+1)/2} \Omega\wedge\ol{\Omega}=\exp(h)\frac{1}{(m+1)!} \omega^{m+1},
\end{equation}
with $\omega$ the K\"{a}hler form of $(C(M),\ol{g})$ and $h\in C^\infty(C(M))$.  Taking the Lie derivative $\mathcal{L}_{\xi}$ of
(\ref{eq:hol-form}), we see the condition in (\ref{prop:SE-item3}) implies $\mathcal{L}_{\xi} h =0$.
We make a homothetic deformation $(g_a,\eta_a,\xi_a,\Phi)$ of $(g,\eta,\xi,\Phi)$ with $a=\frac{\tau}{2m+2}$, i.e.
$\eta_a =a\eta,\ \xi_a =\frac{1}{a}\xi$ and $g_a =ag +(a^2 -a)\eta\otimes\eta$.
We use our original notation for this homothetic Sasaki structure,
then we have
$\mathcal{L}_{\xi} \Omega =\sqrt{-1}(m+1)\Omega$.  Then applying $\mathcal{L}_{r \partial_{r}}$ to (\ref{eq:hol-form}),
with the new Sasaki structure, since $\mathcal{L}_{r\partial_{r}}\omega =2\omega$,
we have $\mathcal{L}_{r\partial_{r}} h =0$.  Thus $h \in C_b^\infty(M)$ is basic, and the Ricci form $\rho$ of
$(C(M),\ol{g})$ is
\begin{equation}\label{eq:Ricci-cone}
\sqrt{-1}\partial\ol{\partial} h=\rho =\rho^T -(2m+2)\omega^T,
\end{equation}
which implies (\ref{prop:SE-item1}) with $\tau=2m+2$.

Conversely, assuming (\ref{prop:SE-item1}) we make a homothetic transformation so that $\tau =2m+2$.  Then
the basic cohomology class $[\rho^T -(2m+2)\omega^T]_b =0$, so the transverse $\partial\ol{\partial}$-Lemma~\cite{Kac90} gives
an $h\in C^\infty(M)_b$ satisfying (\ref{eq:Ricci-cone}).  Define an Hermitian metric on $\Lambda^{1,1} C(M)$ by
\begin{equation}\label{eq:Herm-met}
\|\Omega\|_h := \Bigl(\frac{\sqrt{-1}}{2}\Bigr)^{m+1}(m+1)! (-1)^{m(m+1)/2}\exp(-h)\frac{\Omega\wedge\ol{\Omega}}{\omega^{m+1}}.
\end{equation}
The curvature of the Chern connection of $\|\cdot\|_h$ is $\sqrt{-1}\partial\ol{\partial} h -\rho =0$.  Therefore the universal
cover $\varpi:\tilde{M}\rightarrow M$ has a parallel section $\Omega\in\Lambda^{1,1} C(\tilde{M})$.
\end{proof}

Suppose Proposition~\ref{prop:CY-cond} holds for $(g,\eta,\xi,\Phi)$ with $G\subseteq\Aut(g,\eta,\xi,\Phi)_0$.
We assume $\tau=2m+2$ for simplicity.  Since $G$ is connected (\ref{prop:SE-item1}) is preserved by $G$, and in
(\ref{eq:Ricci-cone}) we may take $h\in C_b^\infty(M)$ to be $G$-invariant.  Thus the metric $\|\cdot\|_h$ is $G$-invariant.
Because $g^*\Omega$ is parallel and $\|g^*\Omega\|_{h} =1$ for $g\in G,\ g^*\Omega =\chi(g)\Omega$ with $\chi(g) \in\U(1)$.
And
\begin{equation}\label{eq:hol-form-char}
 \chi: G\rightarrow\U(1),
\end{equation}
is a character.

For the remainder of the section we suppose that $G=T\subset G'=\Aut(g,\eta,\xi,\Phi)$ is a maximal torus.
Then of course, $\mathfrak{z}=\mathfrak{z}'=\mathfrak{g}$.
\begin{defn}\label{defn:char-hyper}
We define the \emph{characteristic hyperplane} of a Sasaki structure $(g,\eta,\xi,\Phi)$ satisfying Proposition~\ref{prop:CY-cond}
to be the hyperplane $\mathcal{P}=\{X\in\mathfrak{g} : \chi_* X =\sqrt{-1}(m+1) \}\subset\mathfrak{g}$ containing $\xi$.

And define $\mathcal{Q}=\mathcal{P}-\xi=\{X\in\mathfrak{g} : \ker\chi_* \}\subset\mathfrak{g}$ to be the corresponding linear space.
\end{defn}

For any $\xi_\alpha =\xi+\alpha \in\mathcal{P}\cap\mathfrak{g}^+$ the Sasaki structure
$(g_\alpha,\eta_\alpha,\xi_\alpha,\Phi_\alpha)$
with Reeb vector field $\xi_\alpha$ defined in (\ref{eq:Sasaki-var1}) and (\ref{eq:Sasaki-var2}) satisfies Proposition~\ref{prop:CY-cond}.

\subsubsection{Volume functional and Futaki invariant}
We will consider the space of Sasaki structures on $M$ considered in Section~\ref{subsubsec:Sasak-cone} depending on
$(\xi_\alpha,\phi)\in\mathfrak{z}^+ \times C^\infty(M)^G$ with Reeb vector field
$\xi_\alpha=\xi +\alpha \in\mathfrak{z}^+$ and
$\eta_{\alpha,\phi}=\eta_{\xi_\alpha} +d^c \phi$.  These Sasaki structures correspond to a space of K\"{a}hler cone
metrics on $(C(M),I)$ with $G$ contained in the isometry group.  We denote this space of Sasaki structures on $M$ by
$\mathcal{S}(G,I)$.  Just as in~\cite{MarSpaYau08} we consider the volume functional
\begin{equation}\label{eq:Vol-funct}
\begin{array}{rcl}
\mathcal{S}(G,I) & \overset{\Vol}{\longrightarrow} & \R \\
(g_{\alpha,\phi},\eta_{\alpha,\phi},\zeta_\alpha,\Phi_{\alpha,\phi}) & \mapsto & \int_M d\mu_{g_{\alpha,\phi}},
\end{array}
\end{equation}
where it can be shown that $\Vol(g_{\alpha,\phi}) =\frac{1}{m!}\int_M \eta_{\alpha,\phi}\wedge(\frac{1}{2}d\eta_{\alpha,\phi})^m$
depends only on the Reeb vector field $\xi_\alpha$.  See also~\cite{FutOnWan09}.  Thus (\ref{eq:Vol-funct}) defines a
functional
\begin{equation}
\Vol:\mathfrak{g}^+ \rightarrow\R.
\end{equation}

We will need the first and second variation formulae of $\Vol$ which were first calculated in~\cite{MarSpaYau08}.  Let
$\{(g(t),\eta_t,\xi_t,\Phi_t\}_{-\epsilon<t<\epsilon}$ be a 1-parameter family of Sasaki structures in
$\mathcal{S}(G,I)$ with $g(0)=g$ and $\dot{\xi}_0 =X$, then
\begin{equation}\label{eq:Vol-1st-der}
D_g \Vol(X)=\frac{d}{dt} \Vol(g(t))|_{t=0} =-(m+1)\int_M \eta(X)\, d\mu_g.
\end{equation}
For the second derivative, let $\{(g(t),\eta_t,\xi_t,\Phi_t\}_{-\epsilon<t<\epsilon}$ be a 1-parameter family of Sasaki structures in
$\mathcal{S}(G,I)$ with $g(0)=g$ and $\dot{\xi}_0 =Y$, then
\begin{equation}\label{eq:Vol-2nd-der}
\begin{split}
D^2_g \Vol(X,Y) & =-(m+1)\frac{d}{dt}\Bigl(\int_M \eta_t(X)\, d\mu_{g_t} \Bigr)|_{t=0} \\
                & =(m+1)(m+2) \int_M \eta(X)\eta(Y)\, d\mu_g.
\end{split}
\end{equation}

Therefore, $\Vol:\mathfrak{g}^+ \cap\mathcal{P} \rightarrow\R$ is strictly convex function on a convex polytope
$\mathfrak{g}^+ \cap\mathcal{P}$.  Moreover, one can show that the integral in (\ref{eq:Vol-funct}) goes to infinity
as $\xi_\alpha$ approaches the boundary of $\mathcal{C}_{\mathfrak{z}}=\mathfrak{g}^+$.

\begin{prop}\label{prop:Vol-Fut}
Let $(g',\eta',\xi',\Phi')\in\mathcal{S}(G,I)$ have $\xi'\in\mathcal{P}$.  Then $\xi'$ is a critical point
for $\Vol:\mathfrak{g}^+ \cap\mathcal{P} \rightarrow\R$ if and only if the Futaki invariant restricted to $\mathfrak{g}$
vanishes, $\mathcal{F}_{\xi'}|_{\mathfrak{g}}\equiv 0$.
\end{prop}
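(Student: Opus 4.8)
The plan is to reduce the equivalence to a single linear identity on the tangent space $\mathcal{Q}=\ker\chi_* = T_{\xi'}(\mathfrak{g}^+\cap\mathcal{P})$, relating the first variation of $\Vol$ to the Sasaki--Futaki invariant, and then to establish that identity by combining the first variation formula (\ref{eq:Vol-1st-der}) with the variation of the transverse scalar curvature from Lemma~\ref{lem:red-scal-var}.

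First I would record the easy reductions. Since $\chi_*\xi'=\sqrt{-1}(m+1)\neq 0$ we have $\mathfrak{g}=\R\xi'\oplus\mathcal{Q}$, so $\mathcal{P}=\xi'+\mathcal{Q}$ and $T_{\xi'}(\mathfrak{g}^+\cap\mathcal{P})=\mathcal{Q}$. The Reeb field has constant potential $\eta'(\xi')\equiv 1$, and the ordinary Sasaki--Futaki invariant, computed exactly as in (\ref{eq:rel-Futaki-form}) but with the usual Ricci potential $\psi_{g'}$, reads $\mathcal{F}_{\xi'}(X)=C\int_M \eta'(X)(s^T-\bar s^T)\,d\mu_{g'}$, where $v_X=\eta'(X)$, $\bar s^T$ is the average transverse scalar curvature, and $C\neq 0$ is a universal constant. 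Hence $\mathcal{F}_{\xi'}(\xi')=0$ automatically, so $\mathcal{F}_{\xi'}|_\mathfrak{g}\equiv 0$ iff $\mathcal{F}_{\xi'}|_\mathcal{Q}\equiv 0$; and by (\ref{eq:Vol-1st-der}), $\xi'$ is critical for $\Vol|_\mathcal{P}$ iff $\int_M\eta'(X)\,d\mu_{g'}=0$ for all $X\in\mathcal{Q}$. Everything thus reduces to showing that on $\mathcal{Q}$ the two linear functionals $X\mapsto\int_M\eta'(X)(s^T-\bar s^T)\,d\mu$ and $X\mapsto\int_M\eta'(X)\,d\mu$ are proportional with nonzero constant.

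The key step, which I expect to be the main obstacle, is to control the weighted integral $\int_M\eta'(X)s^T\,d\mu$. I would do this by differentiating the total transverse scalar curvature $S(\zeta)=\int_M s^T\,d\mu$ along $\xi_t=\xi'+tX$. The non-reduced form of the computation in Lemma~\ref{lem:red-scal-var} gives $\dot s^T=\eta'(X)s^T-2(m+1)\Delta_b\eta'(X)$, and a short calculation with $\eta_t=\eta(\xi_t)^{-1}\eta'$ gives the pointwise measure variation $\dot{(d\mu)}=-(m+1)\eta'(X)\,d\mu$ (consistent with (\ref{eq:Vol-1st-der})). Since $\int_M\Delta_b\eta'(X)\,d\mu=0$, these combine to $DS(X)=-m\int_M\eta'(X)s^T\,d\mu$. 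On the other hand, by Proposition~\ref{prop:CY-cond} every structure on $\mathcal{P}$ is transverse Fano with $c_1^b(\mathscr{F}_\xi)=\frac{m+1}{\pi}[\omega^T]$, so the cohomological formula for $\bar s^T$ recorded in the excerpt makes $\bar s^T$ a positive constant independent of $\zeta\in\mathcal{P}$, namely $4m(m+1)$. Because $X\in\mathcal{Q}$ keeps the path $\xi_t$ inside the affine plane $\mathcal{P}$, the tautology $S=\bar s^T\,\Vol$ holds with the same constant along the path, so $DS(X)=\bar s^T\,D\Vol(X)=-\bar s^T(m+1)\int_M\eta'(X)\,d\mu$.

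Equating the two expressions for $DS(X)$ yields $\int_M\eta'(X)s^T\,d\mu=\frac{(m+1)\bar s^T}{m}\int_M\eta'(X)\,d\mu$, and substituting this into the Futaki formula collapses the $\bar s^T$ terms to give $\mathcal{F}_{\xi'}(X)=\frac{C\bar s^T}{m}\int_M\eta'(X)\,d\mu$ for all $X\in\mathcal{Q}$. Since $\frac{C\bar s^T}{m}\neq 0$, the vanishing of $\mathcal{F}_{\xi'}$ on $\mathcal{Q}$ is equivalent to $\int_M\eta'(X)\,d\mu=0$ on $\mathcal{Q}$, i.e. to $\xi'$ being a critical point of $\Vol|_\mathcal{P}$, which finishes the proof. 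I anticipate the only delicate points are bookkeeping the universal constant $C$ and the factor in $\dot{(d\mu)}$, and verifying that $\bar s^T$ is genuinely constant along all of $\mathcal{P}$ rather than merely at $\xi'$; both follow from the transverse Fano normalization and the cohomological nature of the average scalar curvature.
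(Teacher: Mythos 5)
Your proof is correct, but it takes a genuinely different route from the paper's. The paper establishes the key identity $\mathcal{F}_{\xi'}(X)=-(2m+2)\int_M\eta'(X)\,d\mu_{g'}$ for $X\in\mathcal{Q}$ by pointwise holomorphic methods: it introduces the Ricci potential $h$ of (\ref{eq:Ricci-cone}) and the weighted Laplacian $\Box^h_b$ of (\ref{eq:mod-Laplac}), invokes the Futaki/Futaki--Ono--Wang theorem identifying the $(2m+2)$-eigenspace of $\Box^h_b$ with normalized holomorphy potentials, derives the eigenvalue equation $\Box^h_b\,\eta'(X)=(2m+2)\eta'(X)$ by applying $\mathcal{L}_{IX}$ to the holomorphic-volume-form identity (\ref{eq:hol-form}), and then evaluates $\mathcal{F}_{\xi'}(X)=\int_M d^c h(X)\,d\mu_{g'}$ directly. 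You instead obtain the same proportionality, with an unspecified nonzero constant, by a purely integral argument: you differentiate $S(\zeta)=\int_M s^T\,d\mu$ along the Sasaki cone in two ways --- once pointwise, using the non-reduced variation formula $\dot s^T=\eta'(X)s^T-2(m+1)\Delta_b\eta'(X)$ from inside the proof of Lemma~\ref{lem:red-scal-var} together with the measure variation $-(m+1)\eta'(X)\,d\mu$, and once cohomologically, using $S=\bar s^T\Vol$ with $\bar s^T\equiv 4m(m+1)$ constant on $\mathcal{P}$ --- and both routes then close the argument through the volume variation (\ref{eq:Vol-1st-der}). What the paper's approach buys is the exact constant and the explicit description $\tilde{\mathcal{H}}_{g'}$ of normalized holomorphy potentials, which is reused later (e.g.\ in the proof of Corollary~\ref{cor:maincor-Einst}); what yours buys is elementarity: you avoid $\Omega$, $h$, $\Box^h_b$ and the eigenvalue theorem entirely, using only facts already proved in the paper, and you make transparent that the equivalence is forced by the constancy of the average transverse scalar curvature along the characteristic hyperplane. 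Two points you rely on implicitly do hold but deserve a word: the formula for $\dot s^T$ is the intermediate step of Lemma~\ref{lem:red-scal-var} and is valid without the hypothesis $s^G_g=0$ (that hypothesis only enters the projection term, which you never use); and $\int_M\Delta_b u\,d\mu_g=0$ for basic $u$ because the Reeb leaves are geodesics, so the basic Laplacian agrees with the Riemannian one on basic functions.
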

\begin{proof}
We consider the following set of potentials for the transversely holomorphic vector fields
$\pi(\mathfrak{g})^{1,0}\subseteq\hol^T(\xi',\ol{J})_0$.  Define
\[ \tilde{\mathcal{H}}_{g'} :=\{ \sqrt{-1}\eta'(X)\, |\, X\in\mathcal{Q}\}\subset\mathcal{H}_{g'}^{\mathfrak{g}}. \]

We define the operator appearing in~\cite{Fut88}, with $h$ given in (\ref{eq:Ricci-cone}) and $\Box_b =\frac{1}{2}\Delta_b$ the
complex Laplacian,
\begin{equation}\label{eq:mod-Laplac}
\Box^h_b u:=\Box_b u-\partial^{\#}u\contr\partial{h}.
\end{equation}
Note that $\Box^h_b$ is self adjoint with respect to a weighted volume on $M$,
\[ \int_M \Box^h_b u \overline{v} e^h d\mu_{g'} =\int_M u\overline{\Box^h_b v} e^h d\mu_{g'}  \]

We say that a holomorphy potential $u_X$ is \emph{normalized} if
\begin{equation}
\int_M u_X\, e^h d\mu_{g'}=0.
\end{equation}

We will need the following result from~\cite{Fut88} and~\cite{FutOnWan09}.
\begin{thm}
Suppose $(g',\eta',\xi',\Phi')$ satisfies Proposition~\ref{prop:CY-cond} and $\Box^h_b$ defined in (\ref{eq:mod-Laplac}).
The eigenspace $\{ u\in C_b^\infty(M,\C)\, |\, \Box^h_b u =(2m+2)u \}$ is isomorphic to the space of normalized
holomorphy potentials.
\end{thm}

Let $X\in\mathcal{Q}$, and apply $\mathcal{L}_{IX}$ to (\ref{eq:hol-form}) to get
\begin{equation}
\begin{split}
0 & = (IX)h + \frac{1}{2}\Delta^{C(M)} r^2 \eta'(X) \\
   & = (IX)h + \frac{1}{2}\Bigl(\frac{1}{r^2}\Delta^M_b -\frac{\partial^2}{\partial r^2} -\frac{(2m+1)}{r}\frac{\partial}{\partial r} \Bigr)r^2 \eta'(X) \\
   & = (IX)h +\frac{1}{2}\Delta^M_b \eta'(X) -(2m+2)\eta'(X) \\
   & = \Box^h_b u -(2m+2)u.
\end{split}
\end{equation}
And it follows that the space of normalized holomorphy potentials for $\pi(\mathfrak{g})^{1,0}\subseteq\hol^T(\xi',\ol{J})_0$ is $\tilde{\mathcal{H}}_{g'}$.

We have $\frac{1}{2}\ol{J}\grad\eta'(X) = X$ and
\begin{equation}
\begin{split}
\mathcal{F}_{\xi'} (X) & = \int_M d^c h(X)\, d\mu_{g'} \\
                       & = \int_M -\ol{J}X h -\frac{1}{2}\Delta_b \eta'(X)\, d\mu_{g'} \\
                       & = -(2m+2)\int_M \eta'(X)\, d\mu_{g'},
\end{split}
\end{equation}
from which the Proposition follows.
\end{proof}

\subsubsection{Deformations of Sasaki-Einstein structures}
We now consider a $G$-equivariant deformation $(\mathscr{F}_\xi ,\ol{J}_t)_{t\in\mathcal{B}}$.
It is useful that the Kuranishi space of Section~\ref{subsubsec:Kurani} is always smooth because of the following.
\begin{prop}\label{prop:Kuran-obst}
Suppose that the conditions of Proposition~\ref{prop:CY-cond} hold, or more generally, $c_1^b >0$.
Then $H^2_{\ol{\partial}_b}(\mathcal{A}^{0,\bullet}) =0$.  Thus the Kuranishi space $\mathcal{V}$ and the submanifold of
$G$-equivariant deformations $\mathcal{V}^G$ are smooth.  
\end{prop}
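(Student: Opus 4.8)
The plan is to reduce the entire statement to the single vanishing $H^2_{\ol{\partial}_b}(\mathcal{A}^{0,\bullet})=0$ and then to prove that vanishing by a transverse Kodaira--Nakano argument. First I would recall from Section~\ref{subsubsec:Kurani} that the versal space $\mathcal{V}$ is the germ at the origin of $\theta^{-1}(0)$, where $\theta\colon H^1_{\ol{\partial}_b}(\mathcal{A}^{0,\bullet})\to H^2_{\ol{\partial}_b}(\mathcal{A}^{0,\bullet})$ is the analytic obstruction map. Hence if the target space is zero then $\theta\equiv 0$, so $\mathcal{V}$ is a full neighborhood of the origin in the vector space $H^1_{\ol{\partial}_b}(\mathcal{A}^{0,\bullet})$ and is in particular smooth. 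The same argument applies equivariantly, since the obstruction for the $G$-equivariant problem lies in $H^2_{\ol{\partial}_b}(\mathcal{A}^{0,\bullet})^G\subseteq H^2_{\ol{\partial}_b}(\mathcal{A}^{0,\bullet})$; once the latter vanishes, $\mathcal{V}^G$ is a neighborhood of the origin in $H^1_{\ol{\partial}_b}(\mathcal{A}^{0,\bullet})^G$ and is therefore smooth as well.

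To obtain the vanishing I would view $H^q_{\ol{\partial}_b}(\mathcal{A}^{0,\bullet})$ as the basic Dolbeault cohomology of $(\mathscr{F}_\xi,\ol{J})$ with coefficients in the transverse holomorphic tangent bundle $\nu(\mathscr{F}_\xi)^{1,0}$, and then transplant the classical computation of $H^{\geq 2}$ of the tangent sheaf on a Fano manifold. The algebraic heart of that computation is the Bott isomorphism, whose transverse version reads
\[
\nu(\mathscr{F}_\xi)^{1,0}\;\cong\;\Omega^{m-1}_b\otimes{\textstyle\bigwedge}^{m,0}\nu(\mathscr{F}_\xi),
\]
where $\Omega^{m-1}_b$ denotes the basic holomorphic $(m-1)$-forms and $\bigwedge^{m,0}\nu(\mathscr{F}_\xi)$ is the transverse anticanonical line bundle; the isomorphism is induced fibrewise by contraction with a transverse holomorphic volume element and is a pointwise statement carried along the leaves. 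It reduces the problem to the line-bundle-coefficient cohomology $H^q_b\bigl(\Omega^{m-1}_b\otimes\bigwedge^{m,0}\nu(\mathscr{F}_\xi)\bigr)$.

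The positivity hypothesis then finishes matters. Since $c_1^b(\mathscr{F}_\xi)=\frac{1}{2\pi}[\rho^T]$ is the transverse first Chern class of $\bigwedge^{m,0}\nu(\mathscr{F}_\xi)$, the assumption $c_1^b>0$, which in particular holds under Proposition~\ref{prop:CY-cond}, says precisely that the transverse anticanonical line bundle is positive, exactly as noted in the remark following Proposition~\ref{prop:obst-pos}. I would then invoke the transverse Kodaira--Nakano vanishing theorem, the same one used to prove Proposition~\ref{prop:obst-pos}, with $L=\bigwedge^{m,0}\nu(\mathscr{F}_\xi)$ and $p=m-1$, to conclude $H^q_b(\Omega^p_b\otimes L)=0$ whenever $p+q>m$, i.e. for every $q\geq 2$. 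In particular $H^2_{\ol{\partial}_b}(\mathcal{A}^{0,\bullet})=0$, which is what the first paragraph requires.

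The deformation-theoretic reduction and the Bott identity are routine. I expect the main obstacle to be confirming that the transverse Kodaira--Nakano theorem is available in the twisted form $p+q>m$ and that its hypotheses hold: one needs $(\mathscr{F}_\xi,\ol{J})$ to be transversally K\"{a}hler, which is automatic as it underlies a Sasaki structure, and one needs genuine transverse positivity of the twisting bundle rather than merely a positive cohomology class. I would secure the latter by applying the transverse $\partial\ol{\partial}$-lemma to replace the given basic metric on $\bigwedge^{m,0}\nu(\mathscr{F}_\xi)$ by one whose Chern curvature is the positive representative of $[\rho^T]$, just as in the Sasaki--Einstein normalization carried out in the proof of Proposition~\ref{prop:CY-cond}.
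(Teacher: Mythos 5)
Your proof is correct, but it takes the Serre-dual route to the one in the paper. The paper's argument first invokes transverse Serre duality for the basic Dolbeault complex, identifying $H^2_{\ol{\partial}_b}(\mathcal{A}^{0,\bullet})$ with (the dual of) $H^{m-2}_{\ol{\partial}_b}$ of basic $(1,\bullet)$-forms twisted by the basic canonical bundle $\Lambda^{m,0}_b$, and then applies the \emph{negative}-bundle form of Kodaira--Nakano (vanishing for $p+q<m$, here $(m-2)+1<m$), using that $c_1^b>0$ makes $\Lambda^{m,0}_b$ admit a metric with negative curvature. You instead rewrite the coefficient bundle itself via the Bott identity $\nu(\mathscr{F}_\xi)^{1,0}\cong\Omega^{m-1}_b\otimes\bigwedge^{m,0}\nu(\mathscr{F}_\xi)$ and apply the \emph{positive}-bundle Akizuki--Nakano vanishing (for $p+q>m$, here $(m-1)+2>m$) directly in degree $2$. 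The two arguments are exchanged by Serre duality, so they are logically equivalent, but the presentations trade ingredients: your route dispenses with establishing transverse Serre duality (one less piece of basic Hodge theory to justify), at the cost of the (pointwise, hence harmless) Bott isomorphism and the positive form of the vanishing theorem; the paper needs both duality and the negative form, each justified by transplanting the Griffiths--Harris harmonic-theoretic proofs to basic forms. Your handling of the two remaining points matches the paper's implicit treatment: genuine metric positivity of the transverse anticanonical bundle is obtained from the positive representative of $c_1^b$ via the transverse $\partial\ol{\partial}$-lemma, and smoothness of $\mathcal{V}$ and $\mathcal{V}^G$ follows since the obstruction map $\theta$ (and its $G$-invariant restriction, landing in $H^2_{\ol{\partial}_b}(\mathcal{A}^{0,\bullet})^G\subseteq H^2_{\ol{\partial}_b}(\mathcal{A}^{0,\bullet})=0$) must vanish identically.
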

\begin{proof}
Using harmonic theory for the Laplacian $\Delta_{\ol{\partial}_b} =\ol{\partial}_b \ol{\partial}_b^* +\ol{\partial}_b^* \ol{\partial}_b$
associated with the complex (\ref{eq:Dol-comp}) one can prove Serre duality using the same arguments as in~\cite{GriHar78}
to get
\begin{equation}\label{eq:Serre-dual}
\begin{split}
H^2_{\ol{\partial}_b}(\mathcal{A}^{0,\bullet}) & \cong H^{m-2}_{\ol{\partial}_b}(\mathcal{A}^{m,\bullet}\otimes\Lambda^{1,0}_b) \\
											   & \cong H^{m-2}_{\ol{\partial}_b}(\mathcal{A}^{1,\bullet}\otimes\Lambda^{m,0}_b) \\
\end{split}
\end{equation}
By our assumption $\Lambda^{m,0}_b$ admits a connection with negative curvature.  Again using harmonic representatives, the
proof of Kodaira-Nakano vanishing in~\cite{GriHar78} works in this situation and we get the last term in (\ref{eq:Serre-dual}) is
zero because $(m-2) +1 < m$.
\end{proof}

Recall that by Proposition~\ref{prop:obst-pos} the existence of Sasaki structures on a deformation in this case is unobstructed.
We have a family $(g_t,\eta_t,\xi,\Phi_t)\in\mathcal{S}(\xi,\ol{J}_t),\ t\in\mathcal{B}$ of compatible Sasaki structures with
$(g_0,\eta_0,\xi_0,\Phi_0)$ satisfying Proposition~\ref{prop:CY-cond}, with $\tau=2m+2$.  Since $c_1^b(\mathscr{F}_\xi,\ol{J}_t)$ is
unchanged under deformation of $\ol{J}_t$, condition (\ref{prop:SE-item1}) of the Proposition holds for all $t\in\mathcal{B}$.
We can define $h_t \in C^\infty_b(M)$ depending smoothly on $t\in\mathcal{B}$ by
\[ h_t =2G_{g_t^T}(\omega^T_t ,\rho_t^T -(2m+2)\omega_t^T)_{g_t^T} =2G_{g_t^T}(s_t^T -s_0^T),\]
where $G_{g_t^T}$ is the Green's function of $g^T_t$.  By taking parallel displacement from a fixed point with respect to the flat
Chern connection of $\|\,\cdot\,\|_{h_t}$ on $\Lambda^{m+1,0}\bigl(C(M)\bigr)$, we get a smooth family of holomorphic
$(m+1,0)$-forms $\Omega_t$ on the family of cones $C(M_t)=(C(M),I_t)$.  Then for each $t\in\mathcal{B}$ as in (\ref{eq:hol-form-char})
we have a character $\chi_t :G\rightarrow\U(1)$.  Since the characters on $G$ is discrete lattice, $\chi_t$ is independent of
$t\in\mathcal{B}$.  It follows that the characteristic hyperplane $\mathcal{P}\subset\mathfrak{g}$ is independent of
$t\in\mathcal{B}$.

\begin{cor}\label{cor:maincor-Einst}
Let $(g,\eta,\xi,\Phi)$ be a Sasaki-Einstein structure, and suppose that $(\mathscr{F}_{\xi}, \ol{J}_t)_{t\in\mathcal{B}}$ is a
$G$-equivariant deformation, where $G\subseteq G'=\Aut(g,\eta,\xi,\Phi)_0$ is a maximal torus. Then there is a neighborhood
$U\subset\mathcal{B}$ so that for $t\in U$ there is a unique $\alpha_t \in\mathcal{Q}\subset\mathfrak{g}$ and a
$\phi_t \in C^\infty(M)^G$ so that $g_{t,\alpha_t,\phi_t}$ is Sasaki-Einstein.
\end{cor}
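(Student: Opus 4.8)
The plan is to reduce the Sasaki-Einstein condition to two separate requirements and to meet them by combining the volume functional with the Sasaki-extremal metrics already produced in Corollary~\ref{cor:maincor-tor}. Since $\Ric_g>0$, Proposition~\ref{prop:obst-pos} ensures the deformation is of $(1,1)$-type and Proposition~\ref{prop:Kuran-obst} that $\mathcal{B}$ is smooth, so Corollary~\ref{cor:maincor-tor} applies: the initial Sasaki-Einstein metric has constant scalar curvature, which is a holomorphy potential, so $s^G_g=0$, and there is a smooth family of Sasaki-extremal metrics $g_{t,\alpha,\phi_{t,\alpha}}$ with $s^G=0$ for $(t,\alpha)$ near $(0,0)$. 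The observation driving the argument is that for $\alpha\in\mathcal{Q}$, i.e.\ $\xi_\alpha=\xi+\alpha\in\mathcal{P}$, condition (i) of Proposition~\ref{prop:CY-cond} holds with $\tau=2m+2$, so we sit in the transverse Fano class $[\rho^T]=(2m+2)[\omega^T]$; in such a class a transverse metric of constant scalar curvature is automatically transverse K\"{a}hler-Einstein. Indeed, writing $\rho^T-(2m+2)\omega^T=\frac12 dd^c h$ as in (\ref{eq:Ricci-cone}) and taking the trace against $\omega^T$ shows that constancy of $s^T$ forces $\Delta_b h=0$, whence $h$ is constant and $\rho^T=(2m+2)\omega^T$, i.e.\ the structure is Sasaki-Einstein.

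Thus it suffices, for each small $t$, to select the Reeb field inside $\mathcal{P}$ so that the corresponding extremal metric has constant transverse scalar curvature. First I would fix the Reeb field. For each $t$ the functional $\Vol_t\colon\mathcal{P}\cap\mathfrak{g}^+\to\R$ is strictly convex, with positive-definite Hessian given by (\ref{eq:Vol-2nd-der}), and tends to $+\infty$ at the boundary of the Sasaki cone, so it has a unique interior critical point $\xi_{\alpha_t}$ with $\alpha_t\in\mathcal{Q}$. Since the initial metric is Sasaki-Einstein its Futaki invariant vanishes on $\mathfrak{g}$, so by Proposition~\ref{prop:Vol-Fut} the point $\xi$ is volume-critical at $t=0$, giving $\alpha_0=0$. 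The critical-point equation $D_\alpha\Vol_t(\xi_\alpha)=0$ is smooth in $(t,\alpha)$ with invertible $\alpha$-derivative at $(0,0)$ by positive-definiteness of (\ref{eq:Vol-2nd-der}), so the implicit function theorem yields a smooth map $t\mapsto\alpha_t\in\mathcal{Q}$ with $\alpha_0=0$; for $t$ in a small neighborhood $U$ the point $(t,\alpha_t)$ lies in the domain of Corollary~\ref{cor:maincor-tor}. By Proposition~\ref{prop:Vol-Fut} again, criticality of $\Vol_t$ at $\xi_{\alpha_t}$ is equivalent to the vanishing of the Futaki invariant $\mathcal{F}_{\xi_{\alpha_t}}$ on $\mathfrak{g}$.

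It remains to check that the extremal metric $g_{t,\alpha_t,\phi_{t,\alpha_t}}$ produced by Corollary~\ref{cor:maincor-tor} for this $\alpha_t$ is Sasaki-Einstein. Because $G=T^r$ is maximal and $s^G=0$, this metric is Sasaki-extremal and its transverse scalar curvature is a Killing potential, so the extremal field $\ol{J}\grad s^T$ lies in $\mathfrak{g}$. The Futaki invariant evaluated on this extremal field is a positive multiple of $\|s^T-s^T_0\|^2_{L^2}$; since $\mathcal{F}_{\xi_{\alpha_t}}$ vanishes on all of $\mathfrak{g}$ it vanishes on the extremal field, forcing $s^T$ to be constant. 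By the transverse Fano observation of the first paragraph, constancy of $s^T$ together with $\xi_{\alpha_t}\in\mathcal{P}$ upgrades the metric to a transverse K\"{a}hler-Einstein, hence Sasaki-Einstein, structure, and $\phi_t=\phi_{t,\alpha_t}\in C^\infty(M)^G$ by the regularity already contained in Corollary~\ref{cor:maincor-tor}. For uniqueness: any Sasaki-Einstein structure in the family must satisfy condition (i), forcing its Reeb field into $\mathcal{P}$, and must have vanishing Futaki invariant, hence a volume-critical Reeb field; strict convexity of $\Vol_t$ makes this Reeb field, and therefore $\alpha_t\in\mathcal{Q}$, unique.

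The main obstacle is the reconciliation carried out in the third paragraph: the extremal metrics arise from the implicit function theorem applied to the reduced scalar curvature (Corollary~\ref{cor:maincor-tor}), whereas the correct Reeb field is pinned down by an independent finite-dimensional variational problem, and one must show these two constructions interact so as to yield constant $s^T$. The essential input is the identity expressing the Futaki invariant of the extremal field as the Calabi-type energy $\|s^T-s^T_0\|^2_{L^2}$, which converts the metric-independent condition $\mathcal{F}_{\xi_{\alpha_t}}|_{\mathfrak{g}}=0$ into constancy of the scalar curvature of the specific extremal representative; combined with the cscK $\Leftrightarrow$ KE equivalence in the transverse Fano class this closes the argument. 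A secondary point is to verify that the volume-minimizing $\alpha_t$ stays within the neighborhood on which Corollary~\ref{cor:maincor-tor} produces an extremal metric, which follows from $\alpha_0=0$ and continuity in $t$.
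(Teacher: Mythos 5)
Your proposal is correct, and it reaches the paper's conclusion by the same essential ingredients but with a different organization. The paper proves the corollary by a single application of the inverse function theorem to an \emph{augmented} map $\tilde{\mathscr{S}}(t,\alpha,\phi)=\bigl(t,\mathcal{F}(\alpha),\pi^W_0(s^G_{t,\alpha,\phi})\bigr)$ on $\mathcal{B}\times\mathcal{Q}\times W_{k+4,0}$, where $\mathcal{F}(\alpha)(X)=-\int_M\eta_{\alpha,\phi}(X)\,d\mu$ is the volume derivative; invertibility of $D_g\tilde{\mathscr{S}}$ follows from the positive-definiteness of the second variation (\ref{eq:Vol-2nd-der}) together with Lemma~\ref{lem:def-map-der}, and the solution $(t,\alpha_t,\phi_t)=\tilde{\mathscr{S}}^{-1}(t,0,0)$ is then shown to be Einstein. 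You instead decouple the two equations: first solve the finite-dimensional problem $D\Vol_t|_{\mathcal{P}\cap\mathfrak{g}^+}=0$ for $\alpha_t$ (by strict convexity, properness at the boundary of the Sasaki cone, and the implicit function theorem), then feed $(t,\alpha_t)$ into the extremal family already produced by Corollary~\ref{cor:maincor-tor}. This decoupling is legitimate precisely because $\Vol$, and hence by Proposition~\ref{prop:Vol-Fut} the Futaki invariant on $\mathfrak{g}$, is independent of $\phi$ -- the same block-triangular structure the paper exploits inside its single IFT. The endgame is identical in both: with $s^G=0$ the extremal field $\ol{J}\grad s$ lies in $\pi(\mathfrak{g})$, the vanishing Futaki invariant evaluated on it equals $\int_M\|s-s_0\|^2\,d\mu$, so $s$ is constant, whence $\Delta_b h_t=0$, $h_t$ is constant and $\rho^T=(2m+2)\omega^T$. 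What each route buys: yours makes the mechanism transparent, reuses Corollary~\ref{cor:maincor-tor} as a black box, and your convexity argument gives uniqueness of the critical Reeb field \emph{globally} in $\mathcal{P}\cap\mathfrak{g}^+$, whereas the paper's augmented IFT delivers existence, local uniqueness and smooth dependence of $(\alpha_t,\phi_t)$ on $t$ in one stroke.

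One small over-claim in your uniqueness paragraph: you assert that any Sasaki--Einstein member of the family "must satisfy condition (i), forcing its Reeb field into $\mathcal{P}$." The Einstein condition does give $[\rho^T]=(2m+2)[\omega^T]$, hence \emph{some} holomorphic volume form of weight $m+1$ by Proposition~\ref{prop:CY-cond}(iii), but identifying its weight with $\chi_*(\xi_\alpha)$ for the fixed form $\Omega_t$ (i.e.\ concluding $\xi_\alpha\in\mathcal{P}$) would require an additional argument that the nowhere-vanishing holomorphic $(m+1,0)$-form on the cone is unique up to scale. This is harmless here: the statement only claims uniqueness of $\alpha_t$ \emph{within} $\mathcal{Q}$, and for $\alpha\in\mathcal{Q}$ your chain (Einstein $\Rightarrow$ Futaki vanishes on $\mathfrak{g}$ $\Rightarrow$ volume-critical $\Rightarrow$ unique by strict convexity) is complete, matching what the paper's local IFT uniqueness gives.
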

\begin{proof}
We modify the map (\ref{eq:def-map})
Let $\tilde{\mathcal{V}}=\mathcal{U}\cap \mathcal{B}\times\mathcal{Q}\times W_{k+4,0}$.  Then we define a map
\begin{equation}\label{eq:def-map-Einst}
\begin{array}{rccc}
\tilde{\mathscr{S}}: & \tilde{\mathcal{V}} & \rightarrow &  \mathcal{B}\times\mathcal{Q}^* \times W_{k,0}\\
			 & (t,\alpha,\phi) & \mapsto & \Bigl( t,\mathcal{F}(\alpha), \pi^W_{0}(s^G_{t,\alpha,\phi}) \Bigr),
\end{array}
\end{equation}
where $\mathcal{F}(\alpha)\in \mathcal{Q}^*$ is defined by
$\mathcal{F}(\alpha)(X):=-\int_M \eta_{\alpha,\phi}(X)\, d\mu_{g_{\alpha,\phi}}$.  Then $D_g \tilde{\mathscr{S}}$ is given
by Lemma~\ref{lem:def-map-der} with the exception of $D_g \mathcal{F}(\dot{\alpha})$ which is given by (\ref{eq:Vol-2nd-der})
\[ D_g \mathcal{F}(\dot{\alpha})(X)=(m+2)\int_M \eta(\dot{\alpha})\eta(X)\, d\mu_g,\quad X\in\mathcal{Q}.\]
It is routine to check that
\[ D_g \tilde{\mathscr{S}}:\mathcal{B}\times\mathcal{Q}\times W_{k+4,0}\rightarrow\mathcal{B}\times\mathcal{Q}^* \times W_{k,0} \]
is an isomorphism.  By the inverse function theorem there is a neighborhood
$\mathcal{U}\subset\mathcal{B}\times\mathcal{Q}^* \times W_{k,0}$ on which $\tilde{\mathscr{S}}^{-1}$ is defined.
Then with $U=\mathcal{U}\cap\mathcal{B}\times\{0\}\times\{0\}$ we set $(t,\alpha_t,\phi_t) =\tilde{\mathscr{S}}^{-1}(t,0,0)$
for $t\in U$.  And $g_{t,\alpha_t,\phi_t}$ is a Sasaki-extremal metric.  Since $s^G_{g_{t,\alpha_t,\phi_t}}=0$ for $t\in U$
we have $\ol{J}_t\grad s_{g_{t,\alpha_t,\phi_t}} \in\pi(\mathfrak{g})\subset\hol^T(\xi+\alpha_t,\ol{J}_t)_0$.

We denote the metric $g_{t,\alpha_t,\phi_t}$ by $g_t$ for brevity.  Let $h_t\in C^\infty_b(M)^G$ satisfy (\ref{eq:Ricci-cone}),
then $\Delta_b h_t =s^T_{g_t} -s^T_0 =s_{g_t} -s_0$, where $s^T_0 =(2m+2)(2m)$ and $s_0$ are the averages of $s^T_{g_t}$ and $s_{g_t}$.
By Proposition~\ref{prop:Vol-Fut} the Futaki invariant $\mathcal{F}_{\xi+\alpha_t} |_{\mathfrak{g}} \equiv 0$, and with
$X=\ol{J}\grad s_{g_t}$ we have
\[\begin{split}
0=\mathcal{F}_{\xi+\alpha_t} (X) & =\int_M d^c h_t(X)\, d\mu_{g_t} \\
                                 & =\int_M (ds_{g_t},dh_t)\, d\mu_{g_t} \\
                                 & =\int_M (s_{g_t},\Delta_b h_t)\, d\mu_{g_t} \\
                                 & =\int_M (s_{g_t}, s_{g_t} -s_0)\, d\mu_{g_t} \\
                                 & =\int_M \|s_{g_t} -s_0\|^2\, d\mu_{g_t}.
\end{split}\]
So $s_{g_t} -s_0 =0$ and $h_t$ is constant, therefore $g_{t,\alpha_t,\phi_t}$ is Sasaki-Einstein.
\end{proof}

\section{Examples}

We describe a family of examples of 7-manifolds on which we can apply Corollary~\ref{cor:maincor-tor} and Corollary~\ref{cor:maincor-Einst}
to give new families of Sasaki-extremal and Sasaki-Einstein metrics.  More details will appear in~\cite{vanC12}.
These examples are deformations of 3-Sasaki manifolds that first appeared in the work of C. Boyer, K. Galicki, B. Mann, and
E. Reese~\cite{BGMR98}.

\begin{defn}\label{eq:3Sasaki}
A Riemannian manifold $(M,g)$ is \emph{3-Sasaki} if the metric cone $(C(M),\ol{g})$ is hyperk\"{a}hler, i.e. $\ol{g}$ admits compatible almost complex structures
$J_\alpha,\ \alpha=1,2,3 $ such that $(C(M),\ol{g},J_1,J_2,J_3)$ is a hyperk\"{a}hler structure.  Equivalently,\\ $\Hol(C(M))\subseteq\Sp(m)$.
\end{defn}

A consequence of the definition is that $(M,g)$ is equipped with three Sasaki structures $(\xi_i,\eta_i,\phi_i),\ i=1,2,3$.
The Reeb vector fields $\xi_k,\ k=1,2,3$ are orthogonal and satisfy $[\xi_i,\xi_j]=2\varepsilon^{ijk}\xi_k$,
where $\varepsilon^{ijk}$ is anti-symmetric in the indices $i,j,k \in\{1,2,3\}$ and $\varepsilon^{123}=1$.

The Reeb vector fields $\xi_k,\ k=1,2,3$ generate an action of $\Sp(1)$ or $\SO(3)$.
A 3-Sasaki manifold $M$ comes with a family of related geometries.  The maps are labeled with their generic fibers.

\begin{center}
\setlength{\unitlength}{10pt}
\begin{picture}(12,12)
\put (5,1){\makebox(1,1){$\mathcal{M}$}}
\put (1.25,5){\makebox(1,1){$M$}}
\put (8.75,5){\makebox(1,1){$\mathcal{Z}$}}
\put (5,9){\makebox(1,1){$C(M)$}}

\put (3,5.5){\vector(1,0){5}}
\put (4.5,8.5){\vector(-1,-1){2}}
\put (6.5,8.5){\vector(1,-1){2}}
\put (2.5,4.5){\vector(1,-1){2}}
\put (8.5,4.5){\vector(-1,-1){2}}

\put (2.5,7.8) {$\Sm{\R_+}$}
\put (8,7.8) {$\Sm{\C^*}$}
\put (5,6) {$\Sm{S^1}$}
\put (1.5,3) {$\Sm{\Sp(1)}$}\put (1.5,2.3) {$\Sm{\SO(3)}$}
\put (7.8,3) {$\Sm{\cps^1}$}

\end{picture}
\end{center}

The leaf space $\mathcal{Z}$ is an orbifold with complex contact structure, while $\mathcal{M}$ is a quaternionic-K\"{a}hler
orbifold.  This intimate relation with other more well known geometries is probably the reason 3-Sasaki manifolds
have not been studied as much quaternionic-K\"{a}hler manifolds.  For more details see~\cite{BoyGal99}

A 3-Sasaki manifolds $(M,g),\ \dim M=4m-1,$ is \emph{toric} if there is a $T^m \subseteq\Aut(M,g,\xi_1,\xi_2,\xi_3)$.
Toric 3-Sasaki manifolds have been constructed from 3-Sasaki quotients by torus actions on $S^{4n-1}$, with the 3-Sasaki structure given by right multiplication by $\Sp(1)$.  A subtorus $T^k \subset T^n$ is determined by a weight matrix $\Omega_{k,n}\in\operatorname{Mat}(k,n,\Z)$.  There are conditions on
$\Omega$~\cite{BGMR98}  that imply the moment map $\mu :S^{4n-1} \rightarrow(\mathfrak{t}^k)^*\otimes\R^3$
is a submersion, and further that the quotient
\[ M_{\Omega_{k,n}} = S^{4n-1}/\negthickspace/{T^k} =\mu^{-1}(0)/{T^k} \]
is smooth.

When $n=k+2$ it was shown in~\cite{BGMR98} that there are infinitely many weight matrices in $\operatorname{Mat}(k,n,\Z)$ for $k\geq 1$
giving infinitely many 7-manifolds $M_{\Omega_{k,n}}$ for each $b_2(M_{\Omega_{k,n}}) =k\geq 1$.

If $b_2(M)\geq 1$, then the maximal torus of \emph{Sasaki} automorphisms, $T^3 \subset\Aut(M,\xi_1)$, is 3-dimensional.
And if $b_2(M)\geq 2$, then the connected component of the identity of $\Isom(g) =T^2 \times\Sp(1)$ or $T^2 \times\SO(3)$,
where the second factor is generated by $\{\xi_1,\xi_2,\xi_3\}$.

\begin{prop}[\cite{vanC12}]
Let $(M,g)$ be a toric 3-Sasaki 7-manifold.  Then after fixing one of the Sasaki structures $(g,\eta_1,\xi_1,\Phi_1)$ with
foliation $\mathscr{F}_{\xi_1}$ we have
\[ H^1_{\ol{\partial}_b}(\mathcal{A}^{0,\bullet})=H^1_{\ol{\partial}_b}(\mathcal{A}^{0,\bullet})^{T^3} =b_2(M)-1=k-1.\]
\end{prop}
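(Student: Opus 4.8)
The plan is to pass to the leaf space of $\mathscr{F}_{\xi_1}$ and recast the problem as an orbifold tangent-sheaf computation. Having fixed the Sasaki structure $(g,\eta_1,\xi_1,\Phi_1)$, the Reeb field $\xi_1$ lies in the compact group $\Sp(1)$ (or $\SO(3)$) generated by $\xi_1,\xi_2,\xi_3$, so $\mathscr{F}_{\xi_1}$ is quasi-regular and its leaf space is the twistor orbifold $\mathcal{Z}$ appearing in the diamond diagram, a Fano K\"{a}hler--Einstein orbifold with $\dim_{\C}\mathcal{Z}=3$ carrying a complex contact structure. Under this identification the transverse holomorphic bundle $\nu(\mathscr{F}_{\xi_1})^{1,0}$ is the orbifold holomorphic tangent sheaf $\Theta_{\mathcal{Z}}$, and the basic Dolbeault complex (\ref{eq:Dol-comp}) computes orbifold cohomology, $H^k_{\ol{\partial}_b}(\mathcal{A}^{0,\bullet})\cong H^k(\mathcal{Z},\Theta_{\mathcal{Z}})$. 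Because $M$ is $3$-Sasaki we have $\Ric^T>0$, hence $c_1^b>0$, so Proposition~\ref{prop:Kuran-obst} already yields $H^2_{\ol{\partial}_b}(\mathcal{A}^{0,\bullet})=H^2(\mathcal{Z},\Theta_{\mathcal{Z}})=0$; the same Kodaira--Nakano argument, or Serre duality $H^3(\Theta_{\mathcal{Z}})\cong H^0(\Omega^1_{\mathcal{Z}}\otimes K_{\mathcal{Z}})^*$ together with negativity of $K_{\mathcal{Z}}$, kills the top degree, $H^3(\mathcal{Z},\Theta_{\mathcal{Z}})=0$. Thus only $H^0$ and $H^1$ survive.

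Next I would compute $H^1$ through the holomorphic Euler characteristic, using the complex contact structure to organize the bundle. The contact form gives a short exact sequence
\[ 0\longrightarrow D\longrightarrow \Theta_{\mathcal{Z}}\longrightarrow L\longrightarrow 0, \]
where $L$ is the contact line bundle with $L^2\cong K_{\mathcal{Z}}^{-1}$ and $D$ is the rank-two contact distribution, $\textstyle\bigwedge^2 D\cong L$. Since $L$ is ample, Kodaira vanishing gives $H^{>0}(\mathcal{Z},L)=0$, and the associated long exact sequence expresses $H^1(\Theta_{\mathcal{Z}})$ as the cokernel of the connecting map $H^0(L)\to H^1(D)$. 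The term $H^0(\mathcal{Z},\Theta_{\mathcal{Z}})=\hol^T(\xi_1,\ol{J})$ is the complexified automorphism algebra: for $b_2(M)\ge 2$ the maximal compact subgroup $\Aut(g,\eta_1,\xi_1,\Phi_1)_0=T^3$ descends to $T^2$ on $\mathcal{Z}$, so by Matsushima $h^0=\dim_{\C}H^0(\mathcal{Z},\Theta_{\mathcal{Z}})=2$. With $h^2=h^3=0$ this gives $h^1=h^0-\chi(\mathcal{Z},\Theta_{\mathcal{Z}})$, reducing the whole statement to the claim $\chi(\mathcal{Z},\Theta_{\mathcal{Z}})=3-k$.

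To obtain the value $k-1=b_2(M)-1$ I would exploit the explicit realization of $\mathcal{Z}$ as a complex-contact reduction of the twistor space $\cps^{2n-1}$ of $S^{4n-1}$ by the complexified torus $T^k_{\C}$ specified by the weight matrix $\Omega_{k,n}$ (with $n=k+2$). Equivariant cohomology for the reduction then reduces the orbifold Riemann--Roch bookkeeping to the combinatorics of $\Omega_{k,n}$, and it is the rank $k$ of the reducing torus that produces the appearance of $b_2(M)=k$; the Gysin sequence (\ref{eq:Gysin-seq}) for the circle fibration $M\to\mathcal{Z}$ and the contact line bundle account for the difference between $h^1(\Theta_{\mathcal{Z}})$ and the Betti numbers. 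Finally, the equality $H^1_{\ol{\partial}_b}(\mathcal{A}^{0,\bullet})=H^1_{\ol{\partial}_b}(\mathcal{A}^{0,\bullet})^{T^3}$ follows by showing the descended $T^2$-action is trivial on $H^1(\mathcal{Z},\Theta_{\mathcal{Z}})$, i.e.\ that every class has a $T^2$-invariant harmonic representative and all nonzero torus weight spaces vanish; this again falls out of the invariant (weight-zero) part of the reduction computation.

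The main obstacle will be the explicit dimension count: controlling the orbifold contributions in Riemann--Roch coming from the non-free locus of the reduction, and computing the connecting map $H^0(L)\to H^1(D)$ (equivalently the invariant part of the equivariant reduction) precisely enough to produce the exact value $k-1$ rather than a mere inequality. A secondary subtlety is the $T^3$-invariance, since a connected torus can \emph{a priori} act nontrivially on cohomology, so one must genuinely verify that no nonzero weight spaces occur in $H^1(\mathcal{Z},\Theta_{\mathcal{Z}})$.
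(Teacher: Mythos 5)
First, a point of orientation: the paper does not actually prove this proposition --- it is imported wholesale from \cite{vanC12} (listed as ``to appear''), so there is no internal argument to compare your proposal against, and it must be judged on its own merits. The framework you set up is sound and is surely the natural one: $\xi_1$ generates a circle subgroup of the $\Sp(1)$ (or $\SO(3)$) action, so $\mathscr{F}_{\xi_1}$ is quasi-regular with leaf space the twistor orbifold $\mathcal{Z}$; the basic Dolbeault cohomology of (\ref{eq:Dol-comp}) then becomes orbifold sheaf cohomology $H^k(\mathcal{Z},\Theta_{\mathcal{Z}})$; $H^2$ vanishes by Proposition~\ref{prop:Kuran-obst} since $c_1^b>0$, and $H^3$ by the same Serre duality/Kodaira--Nakano argument; and $h^0=2$ for $b_2(M)\geq 2$ follows either from orbifold Matsushima or directly from the paper's structure theorem for Sasaki-extremal automorphisms, since here the metric has constant scalar curvature and $\mathfrak{a}=0$, so $\hol^T(\xi_1,\ol{J})=\bigl(\mathfrak{t}^3/\{\xi_1\}\bigr)\otimes\C$. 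One simplification you missed: on a compact complex contact orbifold the projection $\Theta_{\mathcal{Z}}\to L$ is surjective on global sections (holomorphic sections of $L$ correspond bijectively to contact vector fields), so with $H^1(\mathcal{Z},L)=0$ the connecting map vanishes and $H^1(\Theta_{\mathcal{Z}})\cong H^1(\mathcal{Z},D)$; there is no cokernel to compute.

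The genuine gap is that the quantitative content of the proposition --- the number $k-1$ --- is never computed. Your reductions only rephrase the statement as ``$\chi(\mathcal{Z},\Theta_{\mathcal{Z}})=3-k$'' (equivalently $h^1(\mathcal{Z},D)=k-1$), and for this you offer only the intention of running equivariant/orbifold Riemann--Roch on the contact reduction of $\cps^{2n-1}$ by $T^k_{\C}$. That is precisely the hard part: Kawasaki-type orbifold Riemann--Roch requires summing contributions over the singular strata of $\mathcal{Z}$, whose local groups and normal weights depend on the weight matrix $\Omega_{k,n}$, and these contributions must conspire to give an answer depending only on $k$; nothing in the proposal indicates how this happens, and you yourself flag it as the ``main obstacle'' rather than resolving it. The same applies to the second assertion $H^1_{\ol{\partial}_b}(\mathcal{A}^{0,\bullet})=H^1_{\ol{\partial}_b}(\mathcal{A}^{0,\bullet})^{T^3}$: a connected torus can perfectly well act nontrivially on such a cohomology space, so this is a real claim, and it too is deferred to the unperformed weight-space computation. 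Two smaller issues: the case $b_2(M)=1$ lies outside your $h^0=2$ argument (there $\Isom(M)_0$ is larger and $\mathcal{Z}$ is a homogeneous contact Fano orbifold), so the Euler-characteristic bookkeeping would need separate treatment; and the identification $\Isom(\mathcal{Z})_0=T^2$ requires the observation that connected groups of orbifold isometries of $\mathcal{Z}$ lift to $M$. In short: the reduction to the twistor orbifold is correct and useful, but as it stands this is a strategy for a proof, not a proof.
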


By Proposition~\ref{prop:Kuran-obst} there is a smooth Kuranishi space $\mathcal{B}$ of deformations of
$(\mathscr{F}_{\xi_1},\ol{J})$ equivariant with respect to $T^3$.  One can further show that the family $\mathcal{B}$ is
effective~\cite{vanC12}.  Let $\mathfrak{t}$ denote the Lie algebra of $T^3$.
By Corollary~\ref{cor:maincor-tor} there is a neighborhood $N\subset\mathcal{B}\times\mathfrak{t}$ parametrizing
a space of dimension $b_2(M) +2$ of Sasaki-extremal metrics.  And by Corollary~\ref{cor:maincor-Einst} there is a
$b_2(M)-1$-dimensional submanifold $U\subset N$ parametrizing a space of Sasaki-Einstein metrics.
Note that all these Sasaki-extremal structures satisfy Proposition~\ref{prop:CY-cond}, while only the submanifold $U\subset N$ of
Einstein metrics and their homotheties have constant scalar curvature since the rest have non-vanishing Futaki invariant.
See Figure~\ref{fig:def-space} which shows the isometry groups.  Note that the origin is 3-Sasaki while the other metrics
are just Sasaki-Einstein.  It is well known that 3-Sasaki structures are non-deformable, but these are the first examples
known to the author of 3-Sasaki structures contained in families of Sasaki-Einstein structures.  

Recall that a 3-Sasaki manifold $M$ with $\dim M=4m-1$ admits $m+1$ Killing spinors whereas a simply connected Sasaki-Einstein,
non-3-Sasaki, metric admits $2$.  So these families give examples of Einstein metrics admitting 3 Killing spinors with deformations
to Einstein metrics admitting only 2.  These properties are explored further in~\cite{vanC12}.

\begin{figure}[t!]
\labellist
\hair 2pt
\pinlabel $\C^{b_2 -1}$ at 241 234
\pinlabel $\R^{b_2-1}$ [l] at 278 142
\pinlabel $T^3$ at 113 181
\pinlabel {$T^3 \times\Z_2$} [r] at 40 109
\pinlabel {$T^2 \times\Sp(1)$} [l] at 173 145
\endlabellist
\centering
\includegraphics[scale=.5]{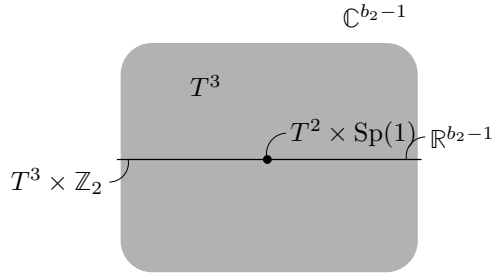}
\caption{Space of Sasaki-Einstein metrics}
\label{fig:def-space}
\end{figure}


\bibliographystyle{plain}

\begin{thebibliography}{10}

\bibitem{Bess87}
Arthur~L. Besse.
\newblock {\em Einstein manifolds}, volume~10 of {\em Ergebnisse der Mathematik
  und ihrer Grenzgebiete (3) [Results in Mathematics and Related Areas (3)]}.
\newblock Springer-Verlag, Berlin, 1987.

\bibitem{BoyGal99}
Charles Boyer and Krzysztof Galicki.
\newblock 3-{S}asakian manifolds.
\newblock In {\em Surveys in differential geometry: essays on {E}instein
  manifolds}, Surv. Differ. Geom., VI, pages 123--184. Int. Press, Boston, MA,
  1999.

\bibitem{BoyGal08}
Charles~P. Boyer and Krzysztof Galicki.
\newblock {\em Sasakian geometry}.
\newblock Oxford Mathematical Monographs. Oxford University Press, Oxford,
  2008.

\bibitem{BGMR98}
Charles~P. Boyer, Krzysztof Galicki, Benjamin~M. Mann, and Elmer~G. Rees.
\newblock Compact {$3$}-{S}asakian {$7$}-manifolds with arbitrary second
  {B}etti number.
\newblock {\em Invent. Math.}, 131(2):321--344, 1998.

\bibitem{BoyGalSim08}
Charles~P. Boyer, Krzysztof Galicki, and Santiago~R. Simanca.
\newblock Canonical {S}asakian metrics.
\newblock {\em Comm. Math. Phys.}, 279(3):705--733, 2008.

\bibitem{BoyGalSim09}
Charles~P. Boyer, Krzysztof Galicki, and Santiago~R. Simanca.
\newblock The {S}asaki cone and extremal {S}asakian metrics.
\newblock In {\em Riemannian topology and geometric structures on manifolds},
  volume 271 of {\em Progr. Math.}, pages 263--290. Birkh\"auser Boston,
  Boston, MA, 2009.

\bibitem{Cal82}
Eugenio Calabi.
\newblock Extremal {K}\"ahler metrics.
\newblock In {\em Seminar on {D}ifferential {G}eometry}, volume 102 of {\em
  Ann. of Math. Stud.}, pages 259--290. Princeton Univ. Press, Princeton, N.J.,
  1982.

\bibitem{Cal85}
Eugenio Calabi.
\newblock Extremal {K}\"ahler metrics. {II}.
\newblock In {\em Differential geometry and complex analysis}, pages 95--114.
  Springer, Berlin, 1985.

\bibitem{CheTia08}
X.~X. Chen and G.~Tian.
\newblock Geometry of {K}\"ahler metrics and foliations by holomorphic discs.
\newblock {\em Publ. Math. Inst. Hautes \'Etudes Sci.}, (107):1--107, 2008.

\bibitem{MorTom97}
Suzana Falc{\~a}o~B. de~Moraes and Carlos Tomei.
\newblock Moment maps on symplectic cones.
\newblock {\em Pacific J. Math.}, 181(2):357--375, 1997.

\bibitem{KacGmi97}
A.~El~Kacimi~Alaoui and B.~Gmira.
\newblock Stabilit\'e du caract\`ere k\"ahl\'erien transverse.
\newblock {\em Israel J. Math.}, 101:323--347, 1997.

\bibitem{KacNic89}
A.~El~Kacimi-Alaoui and M.~Nicolau.
\newblock D\'eformations des feuilletages transversalement holomorphes \`a type
  diff\'erentiable fixe.
\newblock {\em Publ. Mat.}, 33(3):485--500, 1989.

\bibitem{Kac90}
Aziz El~Kacimi-Alaoui.
\newblock Op\'erateurs transversalement elliptiques sur un feuilletage
  riemannien et applications.
\newblock {\em Compositio Math.}, 73(1):57--106, 1990.

\bibitem{Fut83}
A.~Futaki.
\newblock An obstruction to the existence of {E}instein {K}\"ahler metrics.
\newblock {\em Invent. Math.}, 73(3):437--443, 1983.

\bibitem{Fut88}
Akito Futaki.
\newblock {\em K\"ahler-{E}instein metrics and integral invariants}, volume
  1314 of {\em Lecture Notes in Mathematics}.
\newblock Springer-Verlag, Berlin, 1988.

\bibitem{FutOnWan09}
Akito Futaki, Hajime Ono, and Guofang Wang.
\newblock Transverse {K}\"ahler geometry of {S}asaki manifolds and toric
  {S}asaki-{E}instein manifolds.
\newblock {\em J. Differential Geom.}, 83(3):585--635, 2009.

\bibitem{Gir92}
J.~Girbau.
\newblock A versality theorem for transversely holomorphic foliations of fixed
  differentiable type.
\newblock {\em Illinois J. Math.}, 36(3):428--446, 1992.

\bibitem{GriHar78}
Phillip Griffiths and Joseph Harris.
\newblock {\em Principles of algebraic geometry}.
\newblock Wiley-Interscience [John Wiley \& Sons], New York, 1978.
\newblock Pure and Applied Mathematics.

\bibitem{LebSim94}
C.~LeBrun and S.~R. Simanca.
\newblock Extremal {K}\"ahler metrics and complex deformation theory.
\newblock {\em Geom. Funct. Anal.}, 4(3):298--336, 1994.

\bibitem{LebSim93}
Claude LeBrun and Santiago~R. Simanca.
\newblock On the {K}\"ahler classes of extremal metrics.
\newblock In {\em Geometry and global analysis ({S}endai, 1993)}, pages
  255--271. Tohoku Univ., Sendai, 1993.

\bibitem{MarSpaYau08}
Dario Martelli, James Sparks, and Shing-Tung Yau.
\newblock Sasaki-{E}instein manifolds and volume minimisation.
\newblock {\em Comm. Math. Phys.}, 280(3):611--673, 2008.

\bibitem{Noz11}
Hiraku Nozawa.
\newblock Deformation of sasakian metrics.
\newblock arXiv:0809.4699v5, Oct 2011.

\bibitem{ONeil66}
Barrett O'Neill.
\newblock The fundamental equations of a submersion.
\newblock {\em Michigan Math. J.}, 13:459--469, 1966.

\bibitem{ONeil83}
Barrett O'Neill.
\newblock {\em Semi-{R}iemannian geometry}, volume 103 of {\em Pure and Applied
  Mathematics}.
\newblock Academic Press Inc. [Harcourt Brace Jovanovich Publishers], New York,
  1983.
\newblock With applications to relativity.

\bibitem{RolSimTip11}
Yann Rollin, Santiago~R. Simanca, and Carl Tipler.
\newblock Stability of extremal metrics under complex deformations.
\newblock arXiv:1107.0456v4, July 2011.

\bibitem{Sch95}
R.~Schoen.
\newblock On the conformal and {CR} automorphism groups.
\newblock {\em Geom. Funct. Anal.}, 5(2):464--481, 1995.

\bibitem{Sim00}
Santiago~R. Simanca.
\newblock A {$K$}-energy characterization of extremal {K}\"ahler metrics.
\newblock {\em Proc. Amer. Math. Soc.}, 128(5):1531--1535, 2000.

\bibitem{Sim05}
Santiago~R. Simanca.
\newblock Heat flows for extremal {K}\"ahler metrics.
\newblock {\em Ann. Sc. Norm. Super. Pisa Cl. Sci. (5)}, 4(2):187--217, 2005.

\bibitem{vanC12}
Craig van Coevering.
\newblock Deformations of the Killing spinor equation on Sasaki-Einstein and 3-Sasaki manifolds.
\newblock to appear, 2012.

\end{thebibliography}

\end{document}